\def\Bbb{\mathbb}
\def\Dt{\partial_t}
\def\eb{\varepsilon}
\def\R {\mathbb{R}}
\def\<{\left<}
\def\>{\right>}
\def\Nx{\nabla_x}
\def\Dx{\Delta_x}
\def\({\left(}
\def\){\right)}
\def\supp{\operatorname{supp}}
\def\divv{\operatorname{div}}
\def\rot{\operatorname{curl}}
\def\R{\Bbb R}
\def\Dx{\Delta_x}
\def\Nx{\nabla_x}
\def\Dt{\partial_t}
\def\divv{\operatorname{div}}
\newtheorem{proposition}{Proposition}[section]
\newtheorem{theorem}[proposition]{Theorem}
\newtheorem{corollary}[proposition]{Corollary}
\newtheorem{lemma}[proposition]{Lemma}
\theoremstyle{definition}
\newtheorem{definition}[proposition]{Definition}
\newtheorem{remark}[proposition]{Remark}
\numberwithin{equation}{section}
\def \no#1#2#3 {{\bf #1} (#3), #2.}
\def \eds#1#2#3 {#1, #2, #3.}
\begin{document}
\title[] {Infinite energy solutions for damped Navier-Stokes equations in $\R^2$}

\author[] {Sergey Zelik}

\subjclass[2000]{35Q30,35Q35}
\keywords{Navier-Stokes equations, infinite-energy solutions, weighted energy estimates, unbounded domains}
\thanks{
The author would like to thank Gregory Seregin and Thierry Gallay  for the fruitful
discussions.}
\address{University of Surrey, Department of Mathematics, \newline
Guildford, GU2 7XH, United Kingdom.}
\email{s.zelik@surrey.ac.uk}

\begin{abstract} We study the so-called damped Navier-Stokes equations in the whole 2D space. The global well-posedness, dissipativity and further regularity of weak solutions of this problem in the uniformly-local spaces are verified based on the further development of the weighted energy theory for the Navier-Stokes type problems. Note that any divergent free vector field $u_0\in L^\infty(\R^2)$ is allowed and no assumptions on the spatial decay of solutions as $|x|\to\infty$ are posed. In addition, applying the developed theory  to the case of the classical Navier-Stokes problem in $\R^2$, we show  that the properly defined weak solution can grow at most polynomially (as a quintic polynomial) as time goes to infinity.
\end{abstract}

 \maketitle
\tableofcontents
\section{Introduction}\label{s0}
We study the following damped Navier-Stokes system in the whole space $x\in\R^2$:
%$$
\begin{equation}\label{0.1}
\begin{cases}
\Dt u+(u,\Nx)u=\Dx u-\alpha u+\Nx p+g,\\ \divv u=0,\ \
u\big|_{t=0}=u_0,
\end{cases}
\end{equation}
%$$
where $\alpha$ is a positive parameter. These equations describe, for instance, a 2-dimensional viscous liquid moving on a rough surface and are
used in geophysical models for large-scale processes in atmosphere and ocean. The term $\alpha u$
parameterizes the extra dissipation occurring in the planetary boundary layer (see, e.g., \cite{Ped79}; see
also \cite{BP06} for the alternative source of damped Euler equations).
\par
The mathematical theory of damped Navier-Stokes and Euler equations is of a big current interest, see \cite{CV08,CVZ11,CR07,Il91,IlT06,IMT04} and references therein. However, most part of these papers study either the case of bounded underlying domain (e.g., with periodic boundary conditions) or the case of finite energy solutions in the whole space $\R^2$ and very few is known about the  infinite-energy solutions (for instance, starting with $u_0\in L^\infty(\R^2)$ or from the proper uniformly local Sobolev space) which  are natural from the physical point of view (for instance, for applications to the so-called uniform turbulence theory or/and statistical hydrodynamics, see \cite{VF80} for the details). Indeed, the main technical tool for the mathematical study of such solutions is the vorticity equation
%$$
\begin{equation}\label{0.vort}
\Dt\omega+(u,\Nx)\omega=\Dx\omega-\alpha\omega+\rot g,\ \ \omega=\rot u
\end{equation}
%$$
which possesses (in 2D) the maximum principle and, as a consequence, the dissipative $L^\infty$-estimate for the vorticity is almost immediate:
%$$
\begin{equation}\label{0.max}
\|\rot u(t)\|_{L^\infty}\le \|\rot u_0\|_{L^\infty}+\frac 1{\alpha}\|\rot g\|_{L^\infty}.
\end{equation}
%$$
Nevertheless, estimating the $L^\infty$ or uniformly bounded norms of the velocity field $u$ based on the vorticity estimate \eqref{0.max} is far from being straightforward (in particular, due to the ill-posedness of the Helmholtz projectors in the $L^\infty$-type spaces) and, to the best of our knowledge, only exponentially growing in time estimates of $u(t)$ can be obtained based solely on the vorticity estimate \eqref{0.max}. The situation becomes even worse in the case of classical Navier-Stokes problem ($\alpha=0$), where the vorticity estimate may grow linearly in time, but the best known estimate of the $L^\infty$-norm of $u(t)$ is super-exponential:
%$$
\begin{equation}\label{0.bad}
\|u(t)\|_{L^\infty}\le C_1e^{C_2 t^2},
\end{equation}
%$$
where $C_1$ and $C_2$ depend on $u_0$ and $g$, see \cite{GMS01,ST07} (see also \cite{AM05} for the polynomial estimates in the case of periodic boundary conditions in one space direction).
\par
Thus, the alternative/complementary to the vorticity estimate ideas/methods
are necessary to obtain more realistic upper bounds for the solution $u(t)$. The most natural would be to use the so-called {\it weighted} energy estimates which are known as very effective tools in the general theory of dissipative PDEs in unbounded domains, see \cite{MZ08,Z03} and the references therein. The main obstacle to the use of such estimates for the Navier-Stokes type problems is the fact that the inertial term $(u,\Nx)u$ does not vanish being multiplied by the weighted term $u\varphi$ (after the integration by parts, here $\varphi(x)$ is a weight function), but produces the non-sign definite cubic term of the form $\varphi' u^3$. Since the other terms in the weighted estimate are at most quadratic, it was not clear how to control this higher order non sign-definite term and to close the estimate. Another problem is that $u\varphi$ is not divergence free, so the pressure term also does not vanish and requires an extra accuracy.
\par
Both of these problems have been overcome in \cite{Z07} (see also \cite{Z08}) for the case of the Navier-Stokes problem in a strip. The key idea there was to use the special weights
$$
\varphi_\eb(x):=(1+\eb^2|x-x_0|)^{-2}
$$
 depending on a small parameter $\eb$ and this small parameter is allowed to depend on the solution. Then, the extra cubic term $\varphi_\eb'u^3\sim \eb\varphi u^3$ can be made small by the proper choice of the parameter $\eb$ depending on the "size" of the solution $u$. The second problem related with the pressure has been overcome by multiplying the equation by $u\varphi-v_\varphi$ where $v_\varphi$ is a small corrector which makes this multiplier divergence free and which can be found as a solution of the proper linear conjugate equation, see \cite{Z07} for the details.
\par
The main aim of the present paper is to extend the weighted energy method to the case of (damped) Navier-Stokes equations in the whole space. Note that the analytic structure of the pressure term is essentially different than in the case of a strip with Dirichlet boundary conditions, so the method of \cite{Z07} does not work in a direct way and requires an essential modification. In particular, we have to handle the pressure term in a completely different way based on the explicit formula for $\nabla p$ through $u\otimes u$ via the  convolution operator and on the trick with the proper splitting of the convolution kernel developed in \cite{L02} which allows us to treat $\nabla p$ in weighted and uniformly local spaces. Thus, combining
the ideas from \cite{Z07} and \cite{L02}, we will give a comprehensive study of the damped Navier-Stokes equations in the uniformly local spaces.
\par
The main result of the paper is the global well-posedness and dissipativity of the damped Navier-Stokes equations \eqref{0.1} in the uniformly local space $L^2_b(\R^2)$ (see Section \ref{s0.5} for the definitions). Namely, if $g\in L^2_b(\R^2)$ is such that $\divv g=0$ and $\rot g\in L^\infty(\R^2)$, then for every divergence free $u_0\in L^2_b(\R^2)$ there exists a unique global weak solution $u(t)\in L^2_b(\R^2)$ of \eqref{0.1}and the following estimate holds:
%$$
\begin{equation}\label{0.main}
\|u(t)\|_{L^2_b}\le Q(\|u_0\|_{L^2_b})e^{-\beta t}+Q(\|g\|_{L^2_b}+\|\rot g\|_{L^\infty}),\ \ t\ge0,
\end{equation}
%$$
where positive constant $\beta$ and monotone increasing function $Q$ are independent of $t$ and $u$. This dissipative estimate allows us to apply the highly developed general theory of dissipative PDEs for the further study of the damped Navier-Stokes equation \eqref{0.1}. In the present paper, we restrict ourselves only to verifying the existence of the so-called locally compact global attractor for this problem, further applications will be considered in the forthcoming paper.
\par
Although the most part of the paper is devoted to the damped case $\alpha>0$, we also consider the applications of the developed theory to the classical Navier-Stokes problem where $\alpha=0$. In this case, as elementary examples show, the solution $u(t)$ may grow at least linearly as time goes to infinity, so the long standing open problem here is to obtain the realistic/sharp upper bounds for the growth as $t\to\infty$. Although we did not solve this problem completely, the developed method allows us to improve drastically the best known upper bounds \eqref{0.bad} and to show that  the solution $u(t)$ can grow not faster than quintic polynomial in time:
%$$
\begin{equation}\label{0.good}
\|u(t)\|_{L^2_b}\le C(t+1)^5,
\end{equation}
%$$
where the constant $C$ depends on $u_0$ and $g$, but is independent of $t$. Moreover, we obtain {\it optimal} bounds for the growth rate in time of the mean values of the solution $u(t)$ with respect to the balls of time-dependent radiuses  $R(t):=(t+1)^4$:
%$$
\begin{equation}\label{0.good1}
\frac1{R(t)^2}\int_{|x-x_0|\le R(t)}|u(t,x)|^2\,dx\le C(t+1),\ \ x_0\in\R^2
\end{equation}
%$$
(this quantity indeed grows linearly in time for simple examples of spatially homogeneous solutions), see Section \ref{s6} for more details.
\par
The paper is organized as follows. In Section \ref{s0.5}, we recall the definitions and basic properties of the weighted and uniformly local Sobolev spaces, introduce special classes of weights and derive a number of elementary inequalities which will be used throughout of the paper.
\par
In Section \ref{s1}, we introduce (inspired mainly by \cite{L02}) a number of technical tools which allows us to treat the pressure term in the proper weighted and uniformly local spaces as well as to exclude it from the various weighted energy estimates.
\par
Section \ref{s3} is devoted to the case of infinite-energy solutions which however {\it decay} to zero as $|x|\to\infty$ (although the rate of decay can be arbitrarily slow). In this case, we are able to verify the global in times boundedness of solutions based solely on the weighted energy estimates (without using the vorticity equation \eqref{0.vort}).
\par
In Section \ref{s4}, we treat the general case of spatially-non-decaying solutions $u(t)\in L^2_b(\R^2)$. The global in time bounds for such solutions are obtained combining the weighted energy estimates obtained in Section \ref{s3} with the vorticity estimate \eqref{0.max}.
\par
In Section \ref{s5}, we verify the dissipative estimate \eqref{0.main} and show that the solution semigroup associated with equation \eqref{0.1} possesses a locally compact global attractor.
\par
In Section \ref{s6}, we consider the classical Navier-Stokes problem ($\alpha=0$) and derive the polynomial upper bounds for the growth in time of the solution $u(t)$. In particular, estimates \eqref{0.good} and \eqref{0.good1} are verified here.
\par
Finally, some approximations of divergence free vector which are necessary in order to prove the existence of solutions in uniformly local spaces are considered in Appendix 1 (see Section \ref{sA}) and the proof of one important for our purposes interpolation inequality is given in Appendix 2 (see Section \ref{sA2}).

%Without loss of generality, we may assume that
%$$
%\begin{equation}\label{0.2}
%\divv g=0.
%\end{equation}
%$$
%In addition, we will assume that
%$$
%\begin{equation}\label{0.3}
%g\in \dot L^2_b(\R^2)
%\end{equation}
%$$
%where the space $\dot L^2_b(\R^2)$ is a subspace of the uniformly local space $L^2_b(\Omega)$ which consists of the functions tending to zero as %$|x|\to\infty$ in the following sense:
%$$
%\begin{equation}\label{0.4}
%\dot L^2_b(\R^2):=\{g\in L^2_{loc},\  \lim_{|x_0|\to\infty}\|g\|_{L^2(B^1_{x_0})}=0\},
%\end{equation}
%$$
%where $B_{x_0}^R$ is an $R$-ball of $\R^2$ centered at $x_0\in\R^2$.
%\par

\section{Preliminaries I: Weighted and uniformly local spaces}\label{s0.5}

In this section, we briefly discuss the definitions and basic properties of the weighted and uniformly local Sobolev spaces (see \cite{MZ08,Z07,Z03} for more detailed exposition). We start with the class of admissible weight functions and associated weighted spaces.

\begin{definition}\label{Def0.5.weights} A positive function $\phi(x)$, $x\in\R^2$, is a weight function of exponential growth rate $\mu\ge0$ if
%$$
\begin{equation}\label{1.weight}
\phi(x+y)\le C e^{\mu|y|}\phi(x),\ \ x,y\in\R^2.
\end{equation}
%$$
The associated weighted Lebesgue space $L^p_\phi(\R^2)$, $1\le p<\infty$, is defined as a subspace of functions belonging to $L^2_{loc}(\R^2)$ for which the following norm is finite:
%$$
\begin{equation}\label{1.wspace}
\|u\|^p_{L^p_{\phi}}:=\int_{\R^2}\phi(x)|u(x)|^p\,dx<\infty
\end{equation}
%$$
and the Sobolev space $W^{l,p}_\phi(\R^2)$ is the subspace of distributions $u\in \mathcal D'(\R^2)$ whose derivatives up to order $l$ inclusively belong to $L^p_\phi(\R^2)$ (this works for positive integer $l$ only, for fractional and negative $l$, the space $W^{l,p}_\phi$ is defined using the interpolation and duality arguments, see \cite{EZ01,Z07} for more details).
\end{definition}
The typical examples of weight functions of exponential growth rate are
%$$
\begin{equation}\label{1.exweight}
\phi(x):=e^{-\eb|x-x_0|}\ \ \text{or}\ \ \phi(x):=e^{-\sqrt{1+\eb^2|x-x_0|^2}},\ \ \eb\in\R,\ \ x_0\in\R^2.
\end{equation}
%$$
Another class of admissible weights of exponential growth rate are the so-called polynomial weights and, in particular, the weight function
%$$
\begin{equation}\label{1.theta}
\theta_{x_0}(x):=\frac1{1+|x-x_0|^3},\ \ x_0\in\R^3,
\end{equation}
%$$
which will be essentially used throughout of the paper.
\par
Next, we define the so-called uniformly local Sobolev spaces.

\begin{definition}\label{Def1.ul} The space $L^p_b(\R^2)$ is defined as the subspace of functions of $L^p_{loc}(\R^2)$ for which the following norm is finite:
%$$
\begin{equation}\label{1.ul}
\|u\|_{L^p_b}:=\sup_{x_0\in\R^2}\|u\|_{L^p(B^1_{x_0})}<\infty
\end{equation}
%$$
(here and below $B^R_{x_0}$ denotes the $R$-ball in $\R^2$ centered at $x_0$)
and the space $\dot L^p_b(\R^2)$ is a closed subspace of $L^2_b(\R^2)$ which consists of functions tending to zero as $|x|\to\infty$ in the following sense:
%$$
\begin{equation}\label{1.dotul}
\lim_{|x_0|\to\infty}\|u\|_{L^p(B^1_{x_0})}=0.
\end{equation}
%$$
The spaces $W^{l,p}_b(\R^2)$ (resp. $\dot W^{l,p}_b(\R^2)$) are defined as subspaces of distributions $u\in\mathcal D'(\R^2)$ whose derivatives up to order $l$ inclusively belong to $L^p_b(\R^2)$ (resp. $\dot L^p_b(\R^2)$).
\par
In slight abuse of notations, we also define the spaces $W^{l,p}_b([0,T],W^{m,q}(\R^2))$ of functions depending on space $x$ and time $t$ variables using the norm:
%$$
\begin{equation}
\|u\|_{W^{l,p}([0,T],W^{m,q})}:=\sup_{t\in[0,T-1]}\sup_{x_0\in\R^2}\|u\|_{W^{l,p}([0,T],W^{m,q}(B^1_{x_0}))}<\infty.
\end{equation}
%$$
\end{definition}
The next  proposition gives the useful equivalent norms in the weighted Sobolev spaces
\begin{proposition}\label{Prop1.equiv} Let $\phi$ be the weight function of exponential growth rate and let $1\le p<\infty$, $l\in R$ and $R>0$.
 Then,
%$$
\begin{equation}\label{1.equiv}
C_1\int_{x_0\in\R^2}\phi(x_0)\|u\|_{W^{l,p}(B^R_{x_0})}^p\,dx_0\le \|u\|_{W^{l,p}_\phi}^p\le C_2\int_{x_0\in\R^2}\phi(x_0)\|u\|_{W^{l,p}(B^R_{x_0})}^p\,dx_0,
\end{equation}
%$$
where the constants $C_i$ depend on $R$, $l$ and $p$ and the constants $C$ and $\mu$ from \eqref{1.weight}, but are independent of $u$ and of the concrete choice of the weight $\phi$.
\end{proposition}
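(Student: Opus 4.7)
The plan is to establish the equivalence first in the base case $l=0$ via a direct Fubini computation, then to bootstrap to positive integer $l$ derivative-by-derivative, and finally to extend to arbitrary $l\in\R$ by a partition-of-unity argument (equivalently, by interpolation and duality). The governing heuristic is that the exponential growth condition \eqref{1.weight} forces $\phi$ to be \emph{essentially constant} on balls of fixed radius: applying \eqref{1.weight} in both directions (with $y=x-x_0$ and $y=x_0-x$), for every $|x-x_0|\le R$ one obtains
\begin{equation*}
C^{-1}e^{-\mu R}\,\phi(x_0)\le \phi(x)\le Ce^{\mu R}\,\phi(x_0).
\end{equation*}
This pointwise comparison, with constants depending only on $R$ and on the data $C,\mu$ from \eqref{1.weight}, is the only real input.

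For $l=0$ the argument is a one-line Fubini calculation:
\begin{equation*}
\int_{\R^2}\phi(x_0)\|u\|_{L^p(B^R_{x_0})}^p\,dx_0=\int_{\R^2}|u(x)|^p\(\int_{B^R_x}\phi(x_0)\,dx_0\)dx,
\end{equation*}
and by the pointwise comparison above the inner integral lies between $C^{-1}e^{-\mu R}\pi R^2\,\phi(x)$ and $Ce^{\mu R}\pi R^2\,\phi(x)$, yielding both inequalities of \eqref{1.equiv} at once with explicit constants depending only on $R,p$ and $C,\mu$.

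For a positive integer $l$, the local norm decomposes as $\|u\|_{W^{l,p}(B^R_{x_0})}^p=\sum_{|\alpha|\le l}\|\partial^\alpha u\|_{L^p(B^R_{x_0})}^p$, so applying the $l=0$ case to each $\partial^\alpha u$ and summing gives the required equivalence. For general real $l$ I would fix a smooth partition of unity $\{\eta_n\}$ subordinate to a uniformly locally finite cover by balls $B^R_{x_n}$. The pointwise comparison of $\phi$ combined with \eqref{1.weight} gives $\phi(x)\asymp\phi(x_n)$ on $\supp\eta_n$, whence
\begin{equation*}
\|u\|_{W^{l,p}_\phi}^p\asymp\sum_n\phi(x_n)\|\eta_n u\|_{W^{l,p}}^p,
\end{equation*}
while the multiplier property of the $\eta_n$ together with standard restriction/extension theorems for Sobolev spaces produces $\|\eta_n u\|_{W^{l,p}}\asymp\|u\|_{W^{l,p}(B^R_{x_n})}$ with constants depending on the fixed profile of $\eta_n$, on $l$ and $p$, but independent of $n$ and of $\phi$. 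Concatenating these two equivalences gives \eqref{1.equiv}; alternatively one can invoke real interpolation and duality starting from the integer case, as done in \cite{EZ01,Z07}.

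The main obstacle is the fractional/negative $l$ regime, since the local norm $\|u\|_{W^{l,p}(B^R_{x_0})}$ is no longer an integral of pointwise data and Fubini is unavailable. The delicate point in the partition-of-unity route is to guarantee that the cutoff and extension constants do not deteriorate with $n$ or with the choice of weight $\phi$; this is clear by translation invariance of the underlying Sobolev spaces but must be checked carefully when $l$ is negative, where one manipulates distributions. In the interpolation/duality approach the analogous subtlety is verifying that the right-hand side of \eqref{1.equiv} respects interpolation and duality, so that the equivalence extends from integer to fractional/negative exponents with the \emph{same} constants $C_1,C_2$ (up to a factor depending only on $l,p,R$ and on $C,\mu$).
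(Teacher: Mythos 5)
The paper offers no proof of this proposition, deferring instead to the reference \cite{EZ01}; your argument is the standard one used there, namely that \eqref{1.weight} makes $\phi$ comparable to a constant on each ball $B^R_{x_0}$ with constants depending only on $C$, $\mu$ and $R$, after which Fubini settles $l=0$, summation over derivatives settles positive integer $l$, and interpolation/duality (consistent with how the paper defines $W^{l,p}_\phi$ for fractional and negative $l$) settles the rest. Your sketch is correct and correctly identifies the only genuinely delicate point, the uniformity of the localization constants in the non-integer regime.
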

For the proof of these estimates, see e.g., \cite{EZ01}.
\par
Thus, the norms $\int_{x_0\in\R^2}\phi(x_0)\|u\|_{W^{l,p}(B^R_{x_0})}^p\,dx_0$ computed with different $R$'s are equivalent.
\par
The next Proposition gives relations between the weighted and uniformly local norms.

\begin{proposition}\label{Prop1.more} Let $\phi$ be the weight  of exponential growth rate such that $\int_{x\in\R^2}\phi\,dx<\infty$. Then, for every $u\in W^{l,p}_b(\R^2)$ and every $\kappa\ge 1$,
%$$
\begin{equation}\label{1.ul-w}
\|u\|_{W^{l,p}(B^\kappa_{x_0})}^p\le C\int_{y\in B^\kappa_{x_0}}\|u\|_{W^{l,p}(B^1_y)}^p\,dy\le C_\kappa\int_{y\in\R^2}\phi(y-x_0)\|u\|^p_{W^{l,p}(B^1_{y})}\,dy
\end{equation}
%$$
and, in particular, fixing $\kappa=1$ in \eqref{1.ul-w} and taking the supremum with respect to $x_0\in\R^2$, we have
%$$
\begin{equation}\label{1.ul-w1}
\|u\|_{W^{l,p}_b}\le C\sup_{x_0\in\R^2}\|u\|_{W^{l,p}_{\phi(\cdot-x_0)}},
\end{equation}
%$$
where $C$ is independent of $u$ and the concrete choice of the weight $\phi$. In addition,
%$$
\begin{equation}\label{1.w-ul}
\|u\|_{W^{l,p}_\phi}^p\le C\|\phi\|_{L^1}\|u\|_{W^{l,p}_b}^p,
\end{equation}
%$$
where $C$ is also independent of $u$ and the concrete choice of $\phi$.
\end{proposition}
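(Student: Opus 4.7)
The plan is to establish the three inequalities in turn, with the first estimate in \eqref{1.ul-w} being the workhorse from which the other statements in \eqref{1.ul-w} and \eqref{1.ul-w1} follow by routine manipulations, while \eqref{1.w-ul} will be handled separately by invoking Proposition~\ref{Prop1.equiv}.

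\textbf{Step 1: The first inequality in \eqref{1.ul-w} by Fubini.} I would first reduce to the case $l=0$ (i.e., $L^p$ in place of $W^{l,p}$) by noting that all the weak derivatives $\partial^\alpha u$ of order $|\alpha|\le l$ belong to $L^p_{\rm loc}$, so one can apply the $L^p$ statement to each $\partial^\alpha u$ and sum. Then I would compute
\begin{equation*}
\int_{y\in B^\kappa_{x_0}}\|u\|^p_{L^p(B^1_y)}\,dy
=\int_{y\in B^\kappa_{x_0}}\int_{x\in B^1_y}|u(x)|^p\,dx\,dy
=\int_{\R^2}|u(x)|^p\,|B^1_x\cap B^\kappa_{x_0}|\,dx
\end{equation*}
by Tonelli (swapping the order using $x\in B^1_y\Leftrightarrow y\in B^1_x$). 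Restricting the outer integral to $x\in B^\kappa_{x_0}$, I would then observe that for any $x\in B^\kappa_{x_0}$ with $\kappa\ge 1$ the intersection $B^1_x\cap B^\kappa_{x_0}$ always contains a half-ball, hence has Lebesgue measure bounded below by a positive constant $c>0$ depending only on $\kappa$ (actually only on the dimension for the worst boundary case). This produces the required lower bound $\|u\|^p_{L^p(B^\kappa_{x_0})}\le c^{-1}\int_{B^\kappa_{x_0}}\|u\|^p_{L^p(B^1_y)}\,dy$.

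\textbf{Step 2: The second inequality in \eqref{1.ul-w}.} Here I would simply use that the exponential growth condition \eqref{1.weight} implies $\phi(y-x_0)\ge C^{-1}e^{-\mu\kappa}\phi(0)$ for all $y\in B^\kappa_{x_0}$ once one picks a reference point, and more robustly that the ratio $\sup_{B^\kappa_{x_0}}\phi(\cdot-x_0)/\inf_{B^\kappa_{x_0}}\phi(\cdot-x_0)$ is bounded above purely in terms of $C$, $\mu$, and $\kappa$. Consequently $1\le C_\kappa\,\phi(y-x_0)/\phi(0)$ on $B^\kappa_{x_0}$, which after renormalizing (the bound is homogeneous in $\phi$) lets me insert the weight $\phi(y-x_0)$ and extend the domain of integration to all of $\R^2$. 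Then \eqref{1.ul-w1} drops out by setting $\kappa=1$, taking the supremum over $x_0\in\R^2$ on the left-hand side of \eqref{1.ul-w}, and identifying the right-hand side with $C\|u\|^p_{W^{l,p}_{\phi(\cdot-x_0)}}$ via Proposition~\ref{Prop1.equiv} applied with $R=1$ and the translated weight $\phi(\cdot-x_0)$ (which has the same growth constants as $\phi$).

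\textbf{Step 3: The estimate \eqref{1.w-ul}.} For this upper bound I would invoke Proposition~\ref{Prop1.equiv} in the opposite direction:
\begin{equation*}
\|u\|^p_{W^{l,p}_\phi}\le C_2\int_{x_0\in\R^2}\phi(x_0)\|u\|^p_{W^{l,p}(B^1_{x_0})}\,dx_0
\le C_2\|u\|^p_{W^{l,p}_b}\int_{x_0\in\R^2}\phi(x_0)\,dx_0,
\end{equation*}
since $\|u\|_{W^{l,p}(B^1_{x_0})}\le\|u\|_{W^{l,p}_b}$ uniformly in $x_0$ by definition of the uniformly local norm. The constant $C_2$ depends only on $l,p$ and the growth parameters $(C,\mu)$ of $\phi$, not on the specific weight chosen.

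There is no real analytical obstacle here; the only point that requires attention is verifying that all constants genuinely depend only on $l,p$ and on the growth constants $C,\mu$ of the weight (rather than on $\phi$ itself), which is ensured by using the translation-invariance of the growth condition \eqref{1.weight} and the fact that Proposition~\ref{Prop1.equiv} provides constants with the same uniformity.
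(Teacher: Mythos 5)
The paper itself gives no proof of this proposition (it just points to \cite{Z07,Z03}), so there is nothing internal to compare against; your argument is the standard one and it is essentially correct. The Tonelli computation combined with the uniform lower bound on $|B^1_x\cap B^\kappa_{x_0}|$ for $x\in B^\kappa_{x_0}$, $\kappa\ge1$, gives the first inequality in \eqref{1.ul-w}, the pointwise bound $\phi(y-x_0)\ge C^{-1}e^{-\mu\kappa}\phi(0)$ coming from \eqref{1.weight} gives the second, and Proposition \ref{Prop1.equiv} with $R=1$ yields \eqref{1.ul-w1} and \eqref{1.w-ul} exactly as you say. Two caveats. First, your parenthetical claim that ``the bound is homogeneous in $\phi$'' is not accurate: replacing $\phi$ by $\lambda\phi$ leaves the left-hand sides of \eqref{1.ul-w} and \eqref{1.ul-w1} unchanged while scaling the right-hand sides, so the constants there cannot be independent of the concrete choice of $\phi$ unless one normalizes, e.g.\ $\phi(0)=1$ (as all the weights actually used in the paper satisfy). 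Your derivation correctly exposes the factor $\phi(0)^{-1}$ that the constant must absorb; the imprecision lies in the statement as written rather than in your proof, but you should say so explicitly instead of waving it away. Second, the reduction to $l=0$ by summing over the derivatives $\partial^\alpha u$ covers nonnegative integer $l$ only; since the paper also applies these estimates with $l=-1$ (in Lemma \ref{Lem0.1}), you should add a remark that the fractional and negative cases follow from the integer case via the interpolation and duality definition of those spaces adopted in Definition \ref{Def0.5.weights}.
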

For the proof of these results, see e.g., \cite{Z07,Z03}.
\par
The next lemma gives a simple, but important estimate for the weights $\theta_{x_0}(x)$ which will allow us to handle the convolution operators in weighted spaces.
\begin{lemma}\label{Lem1.weight} Let $\theta_{x_0}(x)$ be the weight defined via \eqref{1.theta}. Then, the following estimate holds:
%$$
\begin{equation}\label{1.thetakey}
\int_{x\in\R^2}\theta_{x_0}(x)\theta_{y_0}(x)\,dx\le C\theta_{x_0}(y_0),
\end{equation}
%$$
where $C$ is independent of $x_0,y_0\in\R^2$.
\end{lemma}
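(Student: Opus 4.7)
My plan is to bound the convolution-type integral by a standard ``near/far'' splitting of $\R^2$ relative to the line between $x_0$ and $y_0$, using the two key properties of $\theta_{x_0}$: it is globally integrable ($\int_{\R^2}(1+|x|^3)^{-1}\,dx<\infty$ since the integrand decays like $|x|^{-3}$ in dimension two), and it is bounded by $1$ everywhere. Let $d:=|x_0-y_0|$; the target bound $C\theta_{x_0}(y_0)\sim C/(1+d^3)$ should be extracted by showing that on a large portion of $\R^2$ one factor is already small like $d^{-3}$, while the other factor integrates to a finite constant.

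First I would dispose of the easy short-distance regime. If $d\le 2$, then $\theta_{x_0}(y_0)\ge (1+8)^{-1}=1/9$, and by the trivial bound $\theta_{x_0}(x)\theta_{y_0}(x)\le \theta_{x_0}(x)$ (using $\theta_{y_0}\le 1$) together with the integrability of $\theta_{x_0}$, the left-hand side is bounded by a universal constant, which is in turn $\le 9\,\theta_{x_0}(y_0)$ times the same constant. This reduces matters to $d>2$.

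For $d>2$ I would split $\R^2=A\cup B$ with $A:=\{x:|x-x_0|\le d/2\}$ and $B:=\{x:|x-x_0|>d/2\}$. On $A$ the triangle inequality gives $|x-y_0|\ge d/2$, hence $\theta_{y_0}(x)\le C/(1+d^3)\le C'\theta_{x_0}(y_0)$; pulling this out and bounding $\int_A\theta_{x_0}(x)\,dx\le \|\theta_0\|_{L^1(\R^2)}$ yields a contribution of size $\le C\theta_{x_0}(y_0)$. On $B$ the roles reverse: $|x-x_0|>d/2$ forces $\theta_{x_0}(x)\le C/(1+d^3)\le C'\theta_{x_0}(y_0)$, and what remains is $\int_B\theta_{y_0}(x)\,dx\le \|\theta_0\|_{L^1(\R^2)}$. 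Adding the two contributions gives the required bound with a universal constant $C$ independent of $x_0,y_0$.

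The only thing to be careful about is the finiteness of $\int_{\R^2}(1+|x|^3)^{-1}\,dx$, which should be mentioned explicitly since this is a $2$-dimensional integral of a radially symmetric function with tail $\sim r^{-3}$; in polar coordinates the tail integral is $\int_1^\infty r\cdot r^{-3}\,dr<\infty$. No other obstacle is anticipated; the whole argument is elementary and uses only the cubic power (or indeed any exponent strictly greater than the dimension) in the denominator.
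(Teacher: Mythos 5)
Your proof is correct, but it proceeds by a different decomposition than the paper's. You split the plane geometrically into the ``near $x_0$'' region $A=\{|x-x_0|\le d/2\}$ and its complement, arguing that on each piece one factor is pointwise of size $\theta_{x_0}(y_0)$ while the other integrates to $\|\theta_0\|_{L^1(\R^2)}<\infty$; this forces a separate (trivial) treatment of the short-distance regime $d\le 2$. The paper instead uses the purely algebraic identity
\begin{equation*}
\theta_{x_0}(x)\theta_{y_0}(x)=\frac1{2+|x-x_0|^3+|x-y_0|^3}\Bigl(\frac1{1+|x-x_0|^3}+\frac1{1+|x-y_0|^3}\Bigr),
\end{equation*}
combined with the elementary bound $2+|x-x_0|^3+|x-y_0|^3\ge\frac14(1+|x_0-y_0|^3)$ coming from the triangle inequality and $(a+b)^3\le 4(a^3+b^3)$, to obtain the single pointwise inequality $\theta_{x_0}(x)\theta_{y_0}(x)\le 4\,\theta_{x_0}(y_0)\,(\theta_{x_0}(x)+\theta_{y_0}(x))$, after which one integrates in $x$. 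Both arguments rest on the same underlying observation -- at every $x$ at least one of $|x-x_0|$, $|x-y_0|$ is comparable to $|x_0-y_0|$ -- and both correctly invoke the integrability of $(1+|x|^3)^{-1}$ in two dimensions; the paper's version is marginally slicker in that it avoids any case analysis, while yours is perhaps more transparent about where the gain of $d^{-3}$ comes from. Either is acceptable.
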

\begin{proof}Indeed, due to the triangle inequality,
$$
2+|x-x_0|^3+|x-y_0|^3\ge\frac14(1+(|x-x_0|+|x-y_0|)^3)\ge \frac14(1+|x_0-y_0|^3).
$$
Therefore,
%$$
\begin{multline*}
\theta_{x_0}(x)\theta_{y_0}(x)=\frac1{2+|x-x_0|^3+|x-y_0|^3}\(\frac1{1+|x-x_0|^3}+\frac1{1+|x-y_0|^3}\)\le\\\le
4\theta_{x_0}(y_0)(\theta_{x_0}(x)+\theta_{y_0}(x))
\end{multline*}
%$$
and the integration of this inequality with respect to $x$ gives the desired estimate \eqref{1.thetakey}. Thus, the lemma is proved.
\end{proof}

\begin{corollary}\label{Cor1.conv} Let $\theta_{x_0}(x)$ be defined via \eqref{1.theta}. Then, for every $u\in L^p_{\theta_{x_0}}(\R^2)$, we have
%$$
\begin{equation}\label{1.inv}
\|u\|_{L^p_{\theta_{y_0}}}^p\le C\int_{x_0\in\R^2}\phi_{y_0}(x_0)\|u\|^p_{L^p_{\theta_{x_0}}}\,dx_0,
\end{equation}
%$$
where $C$ is independent of $y_0\in\R$. Moreover, if the function $v\in L^p_{loc}(\R^2)$ satisfies the estimate
%$$
\begin{equation}\label{1.convol}
|v(y)|\le C\int_{x\in\R^2}\theta_{y}(x)|u(x)|\,dx,\ \ \forall y\in\R^2,
\end{equation}
%$$
then $v\in L^p_{\theta_{y_0}}(\R^2)$ and
%$$
\begin{equation}\label{1.convest}
\|v\|_{L^p_{\theta_{y_0}}}\le C_1\|u\|_{L^p_{\theta_{y_0}}},
\end{equation}
%$$
where $C_1$ depends on $C$, but is independent of $y_0\in\R^2$.
\end{corollary}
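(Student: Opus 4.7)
The plan is to treat the two estimates separately; both rest on Fubini together with the convolution inequality of Lemma \ref{Lem1.weight}, but they use it in opposite directions. I will interpret $\phi_{y_0}$ as $\theta_{y_0}$ (the only admissible choice that keeps the constant independent of $y_0$, and presumably what is intended).

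For the first estimate \eqref{1.inv}, I would begin by applying Fubini to rewrite
\[
\int_{x_0\in\R^2}\theta_{y_0}(x_0)\|u\|^p_{L^p_{\theta_{x_0}}}\,dx_0=\int_{x\in\R^2}|u(x)|^p\,K(x,y_0)\,dx,\qquad K(x,y_0):=\int_{x_0\in\R^2}\theta_{y_0}(x_0)\theta_{x_0}(x)\,dx_0.
\]
The key step is then a pointwise \emph{lower} bound $K(x,y_0)\ge c\,\theta_{y_0}(x)$, uniform in $x,y_0$. I would obtain this by restricting the $x_0$-integration to the unit ball $B^1_x$: on that set $\theta_{x_0}(x)\ge 1/2$ directly from \eqref{1.theta}, and the triangle inequality gives $1+|x_0-y_0|^3\le C(1+|x-y_0|^3)$, hence $\theta_{y_0}(x_0)\ge c\,\theta_{y_0}(x)$. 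Integrating over $B^1_x$ produces the desired lower bound on $K$, and \eqref{1.inv} follows. (As a cross-check, one can bypass the kernel analysis by invoking Proposition \ref{Prop1.equiv} with $R=1$ and then using $\|u\|^p_{L^p(B^1_{x_0})}\le 2\|u\|^p_{L^p_{\theta_{x_0}}}$, again because $\theta_{x_0}\ge 1/2$ on $B^1_{x_0}$.)

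For the second estimate \eqref{1.convest}, I would first upgrade the $L^1$-type convolution bound \eqref{1.convol} to an $L^p$-type pointwise bound using H\"older's inequality. Writing $\theta_y(x)=\theta_y(x)^{1/p'}\theta_y(x)^{1/p}$ and exploiting $M:=\sup_{y\in\R^2}\int_{\R^2}\theta_y(x)\,dx<\infty$ (which is translation invariant since $\theta_y(x)$ depends only on $|x-y|$ and is integrable), one gets
\[
|v(y)|^p\le C^pM^{p/p'}\int_{\R^2}\theta_y(x)|u(x)|^p\,dx.
\]
Multiplying by $\theta_{y_0}(y)$, integrating, and swapping order of integration by Fubini yields
\[
\|v\|^p_{L^p_{\theta_{y_0}}}\le C_0\int_{\R^2}|u(x)|^p\Bigl(\int_{\R^2}\theta_{y_0}(y)\theta_y(x)\,dy\Bigr)\,dx,
\]
and at this point Lemma \ref{Lem1.weight} provides $\int\theta_{y_0}(y)\theta_y(x)\,dy\le C\,\theta_{y_0}(x)$, giving $\|v\|_{L^p_{\theta_{y_0}}}\le C_1\|u\|_{L^p_{\theta_{y_0}}}$ with $C_1$ independent of $y_0$.

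The only genuine subtlety is the \emph{direction} of the kernel bound in the first part: Lemma \ref{Lem1.weight} delivers an upper bound for $\int\theta_{y_0}(x_0)\theta_{x_0}(x)\,dx_0$, whereas \eqref{1.inv} needs the matching lower bound. Recovering it requires the specific pointwise structure of $\theta$ (positivity, comparability on unit balls), which is exactly the feature that makes the weight \eqref{1.theta} so convenient. Everything else is a routine Fubini plus H\"older computation.
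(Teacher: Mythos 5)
Your proof is correct and follows essentially the same route as the paper: for \eqref{1.convest} the paper does exactly your computation (H\"older to upgrade \eqref{1.convol} to a pointwise $L^p$ bound, then multiply by $\theta_{y_0}$, Fubini, and Lemma \ref{Lem1.weight}), and for \eqref{1.inv} the paper simply declares it ``an immediate corollary of \eqref{1.thetakey} and the Fubini theorem.'' Your one added contribution is worthwhile: you correctly observe that \eqref{1.inv} actually needs the \emph{lower} kernel bound $\int\theta_{y_0}(x_0)\theta_{x_0}(x)\,dx_0\ge c\,\theta_{y_0}(x)$, which is not literally what Lemma \ref{Lem1.weight} states, and you supply the elementary argument (restriction to $B^1_x$ plus comparability of $\theta_{y_0}$ on unit balls) that the paper leaves implicit.
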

\begin{proof} Indeed, \eqref{1.inv} is an immediate corollary of \eqref{1.thetakey} and the Fubini theorem. To verify \eqref{1.convest}, we note that, due to the H\"older inequality, estimate \eqref{1.convol} implies that
$$
|v(y)|^p\le \tilde C\int_{x\in\R^2}\theta_{y}(x)|u(x)|^p\,dx.
$$
Multiplying this inequality by $\theta_{y_0}(y)$, integrating over $y$ and using the Fubini theorem and \eqref{1.thetakey}, we have
%$$
\begin{multline*}
\|v\|_{L^p_{y_0}}^p\le \tilde C\int_{x\in\R^2}\int_{y\in\R^2}\theta_{y_0}(y)\theta_{x}(y)\,dy |u(x)|^p\,dx\le C_1\int_{x\in\R^2}\theta_{y_0}(x)|u(x)|^p\,dx=C_1\|u\|^p_{L^p_{\theta_{y_0}}}
\end{multline*}
%$$
and the corollary is proved.
\end{proof}
We conclude this section by introducing some weights and norms depending on a big parameter $R$ which will be crucial for what follows. First, we introduce the following equivalent norm in the space $W^{l,p}_b(\R^2)$:
%$$
\begin{equation}
\|u\|_{W^{l,p}_{b,R}}:=\sup_{x_0\in\R^2}\|u\|_{W^{l,p}(B^R_{x_0})}.
\end{equation}
%$$
Then, according to \eqref{1.ul-w},
%$$
\begin{equation}\label{1.ulR}
\|u\|_{W^{l,p}_b}\le\|u\|_{W^{l,p}_{b,R}}\le CR^{2/p}\|u\|_{W^{l,p}_b},
\end{equation}
%$$
where the constant $C$ is independent of $R\ge1$. We also introduce the scaled weight function
%$$
\begin{equation}\label{ttheta}
\theta_{R,x_0}(x):=\frac1{R^3+|x-x_0|^3}=R^{-3}\theta_{x_0/R}(x/R).
\end{equation}
%$$
Then, the scaled analogue of \eqref{1.ul-w} reads
%$$
\begin{equation}\label{1.ul-wR}
\|u\|_{W^{l,p}(B^{\kappa R}_{x_0})}^p\le CR^{-2}\int_{y\in B^{\kappa R}_{x_0}}\|u\|_{W^{l,p}(B^R_y)}^p\,dy\le C_\kappa R\int_{y\in\R^2}\theta_{R,x_0}(y)\|u\|^p_{W^{l,p}(B^R_{y})}\,dy,
\end{equation}
%$$
where the constants $C$ and $C_\kappa$ are independent of $R$ and the scaled analogue of \eqref{1.thetakey}
\begin{equation}\label{1.R-thetakey}
\int_{x\in\R^2}\theta_{R,x_0}(x)\theta_{R,y_0}(x)\,dx\le CR^{-1}\theta_{R,x_0}(y_0),
\end{equation}
%$$
where $C$ is independent of $x_0,y_0\in\R^2$ and $R>0$. Moreover,  multiplying  inequality \eqref{1.ul-wR} by $\theta_{R,y_0}(x_0)$, integrating over $x_0$ and using \eqref{1.R-thetakey}, we see that
%$$
\begin{equation}\label{1.eqtheta}
\int_{x\in\R^2}\theta_{R,x_0}(x)\|u\|^p_{W^{l,p}(B^{\kappa R}_x)}\,dx\le C_\kappa \int_{x\in\R^2}\theta_{R,x_0}(x)\|u\|^p_{W^{l,p}(B^{R}_x)}\,dx,
\end{equation}
%$$
where $C_\kappa$ is independent of $R$. We also note that, analogously to \eqref{1.w-ul} and using \eqref{1.ulR},
%$$
\begin{equation}\label{ulR}
\int_{x\in\R^2}\theta_{R,x_0}(x)\|u\|^p_{W^{l,p}(B^R_{x})}\,dx\le CR^{-1}\|u\|^p_{W^{l,p}_{b,R}}\le C_1R\|u\|^p_{W^{l,p}_b},
\end{equation}
%$$
where the constants $C$ and $C_1$ are independent of $R\gg1$. Finally, we need one more simple statement about the behavior of the quantities involved into \eqref{ulR} in the case when $u\in \dot W^{l,p}_b(\R^2)$.

\begin{proposition}\label{Prop1.decay} Let $u\in\dot W^{l,p}_b(\R^2)$. Then,
%$$
\begin{equation}\label{limzero}
\lim_{R\to\infty}\frac1{R^2}\|u\|_{W^{l,p}_{b,R}(\R^2)}=0.
\end{equation}
%$$
\end{proposition}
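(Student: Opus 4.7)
The plan is to exploit the key estimate \eqref{1.ul-w} from Proposition \ref{Prop1.more}, applied with $\kappa=R$, and then use the decay of $y\mapsto\|u\|_{W^{l,p}(B^1_y)}^p$ at infinity to split the resulting integral. Concretely, the starting bound is
\begin{equation*}
\|u\|_{W^{l,p}(B^R_{x_0})}^p \le C\int_{y\in B^R_{x_0}}\|u\|_{W^{l,p}(B^1_y)}^p\,dy,
\end{equation*}
with $C$ independent of $R\ge1$ and $x_0\in\R^2$.

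Fix $\eps>0$. Since $u\in\dot W^{l,p}_b(\R^2)$, there exists $N=N(\eps)$ such that $\|u\|_{W^{l,p}(B^1_y)}<\eps$ whenever $|y|>N$. I will split the integral over $B^R_{x_0}$ into its intersection with $\{|y|\le N\}$, on which the crude bound $\|u\|_{W^{l,p}(B^1_y)}\le\|u\|_{W^{l,p}_b}$ applies, and the complement, on which the decay is used. Since the two-dimensional areas of these pieces are at most $\pi N^2$ and $\pi R^2$ respectively, this yields
\begin{equation*}
\|u\|_{W^{l,p}(B^R_{x_0})}^p \le C\pi N^2\|u\|_{W^{l,p}_b}^p + C\pi R^2\eps^p,
\end{equation*}
uniformly in $x_0\in\R^2$. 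Taking the supremum over $x_0$ and a $p$-th root via subadditivity of $t\mapsto t^{1/p}$ on $[0,\infty)$ gives
\begin{equation*}
\|u\|_{W^{l,p}_{b,R}} \le C_1 N^{2/p}\|u\|_{W^{l,p}_b} + C_1 R^{2/p}\eps.
\end{equation*}

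Dividing by $R^2$ and sending $R\to\infty$ leaves $\limsup_{R\to\infty}R^{-2}\|u\|_{W^{l,p}_{b,R}}\le C_1\eps\cdot\limsup_{R\to\infty}R^{2/p-2}$, and since $\eps>0$ is arbitrary this yields \eqref{limzero}. The only delicate point concerns the borderline case $p=1$: for $p>1$ the exponent $2/p-2$ is strictly negative, so \eqref{limzero} is already an immediate consequence of the crude bound in \eqref{1.ulR} without any use of the decay hypothesis. For $p=1$, by contrast, $\|u\|_{W^{l,1}_{b,R}}/R^2$ can a priori remain bounded away from zero for a generic $u\in W^{l,1}_b$, and the hypothesis $u\in\dot W^{l,1}_b$ enters the argument precisely through the truncation at radius $N$ above. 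This borderline step is the only place where the decay assumption is genuinely needed, and I expect it to be the main substantive ingredient of the proof.
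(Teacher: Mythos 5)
Your argument is correct, and since the paper declares this proof ``straightforward'' and leaves it to the reader, your splitting of $B^R_{x_0}$ into the part inside $B^N_0$ (controlled by the crude $W^{l,p}_b$ bound over an area $O(N^2)$) and the part outside (where the $\dot W^{l,p}_b$ decay contributes a factor $\eps^p$ over an area $O(R^2)$), fed into the first inequality of \eqref{1.ul-w} with $\kappa=R$, is exactly the intended elementary argument. Your closing remark is also correct and worth emphasizing: as literally stated, \eqref{limzero} follows for every $p>1$ directly from \eqref{1.ulR} with no decay hypothesis, so the proposition as written only has content at $p=1$. Moreover, the quantity the paper actually needs later (to arrange \eqref{0.30} via \eqref{ulR}, with $p=2$) is $R^{-2}\|u\|^p_{W^{l,p}_{b,R}}\to 0$, equivalently $R^{-2/p}\|u\|_{W^{l,p}_{b,R}}\to 0$, which is genuinely nontrivial for every $p$ and does require $u\in\dot W^{l,p}_b$; the statement as printed is presumably missing the $p$-th power. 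Your intermediate inequality $\|u\|^p_{W^{l,p}_{b,R}}\le C\pi N^2\|u\|^p_{W^{l,p}_b}+C\pi R^2\eps^p$ already yields this stronger form before you take the $p$-th root, so your proof in fact establishes the version of the proposition that the paper actually uses.
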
\nopagebreak
The proof of this proposition is straightforward, so we leave it to the reader.

\section{Preliminaries II: Excluding the pressure} \label{s1}
In this section, we introduce the key estimates which allow us to work with the pressure term $\nabla p$ in the uniformly local spaces. Note that the Helmholtz decomposition does not work for the general vector fields belonging to $L^2_b(\R^2)$, so the standard (for the bounded domains) approach
does not work at least directly and we need to proceed in a bit more accurate way.
\par
As usual, we assume that \eqref{0.1} is satisfied in the sense of distributions. Then, taking  the divergence from both sides of \eqref{0.1} and assuming that the external forces $g$ are divergence free:
%$$
\begin{equation}\label{gdiv}
\divv g=0,
\end{equation}
%$$
we have
%$$
\begin{equation}\label{0.5}
\Dx p=\divv((u,\Nx)u)=\sum_{i,j=1}^2\partial_{x_i}\partial_{x_j}(u_iu_j).
\end{equation}
%$$
Thus, formally, $p$ can be expressed through $u$ by the following singular integral operator:
%$$
\begin{equation}\label{0.6}
p(y):=\int_{\R^2}\sum_{ij}K_{ij}(x-y)u_i(x)u_j(x)\,dx,\ \ K_{ij}(x):=\frac1{2\pi}\frac{|x|^2\delta_{ij}-2x_ix_j}{|x|^4}
\end{equation}
%$$
which we present in the form
%$$
\begin{equation}\label{0.conv}
p=\Bbb K w:=K*w,\ \ w:=u\otimes u,\ \ K*w=\sum_{ij} K_{ij} * w_{ij}.
\end{equation}
%$$
It is well-known that the convolution operator $\Bbb K$ is well-defined as a bounded  linear operator from $w\in [L^{q}(\R^2)]^4$ to $p\in L^q(\R^2)$,
$1<q<\infty$, but it is not true neither for $q=\infty$ nor for the uniformly-local space $L^q_b(\R^2)$. However, as the following simple lemma shows, the {\it gradient} of $p$ (which is sufficient in order to define a solution of \eqref{0.1}) is
well-defined in uniformly-local spaces and has natural regularity properties.

\begin{lemma}\label{Lem0.1} The operator $w\to\Nx p$, where $p$ is defined via \eqref{0.conv} can be extended by continuity (in $L^p_{loc}$) in a unique way to the bounded operator from $[L^q_b(\R^2)]^4$ to $[W^{-1,q}_b(\R^2)]^2$, $1<q<\infty$.
\end{lemma}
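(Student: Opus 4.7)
The natural strategy is to split the singular kernel $K$ into a compactly supported Calder\'on--Zygmund part and a smooth, $L^1$-integrable remainder (cf.~\cite{L02}). I fix a cutoff $\eta\in C_c^\infty(\R^2)$ with $\eta\equiv 1$ on the unit ball and $\supp\eta\subset B^2_0$, and decompose
$$
K_{ij}=\eta K_{ij}+(1-\eta)K_{ij}=:K^{loc}_{ij}+K^{far}_{ij}.
$$
Then $K^{loc}$ is a classical Calder\'on--Zygmund kernel with compact support, while $K^{far}$ is smooth on all of $\R^2$ (the singularity at the origin being killed by the factor $1-\eta$) and satisfies the pointwise bounds $|K^{far}(x)|\le C(1+|x|)^{-2}$ and $|\nabla K^{far}(x)|\le C(1+|x|)^{-3}$; in particular $\nabla K^{far}\in L^1(\R^2)$.

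For the local piece, I would first establish the estimate for $w$ in the dense subclass $[L^q(\R^2)\cap L^q_b(\R^2)]^4$. Fix $x_0\in\R^2$; since $\supp K^{loc}\subset B^2_0$, the function $K^{loc}*w$ restricted to $B^1_{x_0}$ depends only on the values of $w$ on $B^3_{x_0}$, so it coincides there with $K^{loc}*(w\mathbf{1}_{B^3_{x_0}})$. The truncated datum lies in $L^q(\R^2)$ with norm at most $C\|w\|_{L^q_b}$, and classical Calder\'on--Zygmund theory applied to $K^{loc}$ gives
$$
\|K^{loc}*w\|_{L^q(B^1_{x_0})}\le\|K^{loc}*(w\mathbf{1}_{B^3_{x_0}})\|_{L^q(\R^2)}\le C\|w\|_{L^q_b}.
$$
Taking $\sup_{x_0}$ yields $\|K^{loc}*w\|_{L^q_b}\le C\|w\|_{L^q_b}$, so $\nabla(K^{loc}*w)\in W^{-1,q}_b$ with the same bound.

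For the far piece, smoothness of $K^{far}$ lets me push the derivative onto the kernel, $\nabla(K^{far}*w)=(\nabla K^{far})*w$, and since $\nabla K^{far}\in L^1(\R^2)$ Minkowski's integral inequality yields, uniformly in $x_0$,
$$
\|(\nabla K^{far})*w\|_{L^q(B^1_{x_0})}\le\int_{\R^2}|\nabla K^{far}(z)|\,\|w\|_{L^q(B^1_{x_0-z})}\,dz\le\|\nabla K^{far}\|_{L^1}\|w\|_{L^q_b},
$$
so $(\nabla K^{far})*w\in L^q_b\hookrightarrow W^{-1,q}_b$. Summing the two contributions gives the required bound $\|\nabla p\|_{W^{-1,q}_b}\le C\|w\|_{L^q_b}$ on the dense subclass.

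To extend, given $w\in L^q_b$ I take $w_n:=w\mathbf{1}_{B^n_0}\in L^q\cap L^q_b$, which converges to $w$ in $L^q_{loc}$ with $\sup_n\|w_n\|_{L^q_b}\le\|w\|_{L^q_b}$. Both arguments above are stable under this convergence: for the local piece, $(K^{loc}*(w_n-w_m))|_{B^1_{x_0}}$ is controlled by the $L^q$-norm of $w_n-w_m$ on $B^3_{x_0}$, which tends to zero; for the far piece, dominated convergence applied inside the Minkowski integral does the job. Hence $\nabla p_n$ is Cauchy in $W^{-1,q}_{loc}$ and its limit supplies the unique continuous extension, with the same uniform bound carried along by linearity. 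The \emph{main obstacle}, and the reason one must avoid defining $p$ itself, is that $K$ is not integrable at infinity, so $K*w$ has no meaning for non-decaying $w$; the splitting is precisely what confines the Calder\'on--Zygmund singularity to the local piece (where the localization trick legitimizes the classical $L^q$ bound) and the slow decay to the integrable tail $\nabla K^{far}$ (where Minkowski suffices).
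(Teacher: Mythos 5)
Your proof is correct and follows essentially the same route as the paper: the identical near/far splitting of the kernel, Calder\'on--Zygmund theory plus localization for the compactly supported piece, and the integrable decay of the differentiated far kernel for the remainder, followed by extension via truncation. The only cosmetic difference is that the paper runs the far-field convolution estimate through the weighted spaces $L^q_{\theta_{x_0}}$ (Corollary \ref{Cor1.conv}) before taking suprema over $x_0$, whereas you apply Minkowski's integral inequality directly to the uniformly local norm.
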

\begin{proof} Indeed, let $\psi(x)\in C^\infty_0(\R^2)$ be the cut-off function being identically one near $x=0$ (for $|x|\le1/4$)
  and identically zero for $|x|\ge1/2$ and let $w\in [C_0^\infty(\R^2)]^4$. Then,
%$$
\begin{multline}\label{0.7}
\nabla_y p=\nabla_y \sum_{ij}\int_{\R^2}\psi(x-y)K_{ij}(x-y)w_{ij}(x)\,dx+\\+\sum_{ij}\int_{\R^2}\nabla_y[(1-\psi(x-y))K_{ij}(x-y)]w_{ij}(x)\,dx:=
\Bbb K_1 w+\Bbb K_2 w.
\end{multline}
%$$
The first operator is extendable to the uniformly local spaces since the kernel vanishes if $|x|\ge1/2$ and by the standard Zygmund-Calderon estimates (see, e.g., \cite{Tri78}), we have
%$$
\begin{equation}\label{0.8}
\|\Bbb K_1 w\|_{W^{-1,q}(B_{x_0}^1)}^q\le C\|w\|_{L^q(B^{3/2}_{x_0})}^q,
\end{equation}
%$$
where $C$ is independent of $x_0\in\R^2$. Thus, multiplying \eqref{0.8} by $\phi(x_0)$, integrating over $x_0\in\R^3$ and using \eqref{1.ul-w} and \eqref{1.equiv}, we see that
%$$
\begin{equation}\label{0.1good}
\|\Bbb K_1 w\|_{W^{-1,q}_\phi}\le C_\phi\|w\|_{L^q_\phi},
\end{equation}
%$$
holds for every weight function $\phi$ of exponential growth rate. Moreover, taking $\phi=\phi_{x_0}(x):=\phi(x-x_0)$ in \eqref{0.1good} where $\phi$ is an integrable weight of exponential growth rate, taking the supremum over $x_0\in\R^2$ and using \eqref{1.ul-w1} and \eqref{1.w-ul}, we see that the operator $\Bbb K_1$ can be extended in a unique way on functions $w\in [L^q_b(\R^2)]^4$ with the natural estimate
%$$
\begin{equation}\label{0.1ggood}
\|\Bbb K_1w\|_{W^{-1,q}_b}\le C\|w\|_{L^q_b}.
\end{equation}
%$$

 The second operator of \eqref{0.7} is a smoothing  convolution operator with integrable kernel (decaying as $\frac1{|x|^3}$  as $|x|\to\infty$), namely,
$$
|K_2(z)|\le\frac C{1+|z|^3}
$$
for some constant $C$ independent of $z$. Therefore, according to Corollary \ref{Cor1.conv},
%$$
\begin{equation}\label{0.2ggood}
\|\Bbb K_2 w\|_{L^q_{\theta_{x_0}}}\le C\|w\|_{L^q_{\theta_{x_0}}},\ \ \ \theta_{x_0}(x):=\frac1{1+|x-x_0|^3},
\end{equation}
%$$
where $C$ is independent of $x_0\in\R^2$. Thus, taking the supremum over $x_0\in\R^2$, we see that $\Bbb K_2$ can also be extended by continuity to $w\in[L^q_b(\R^2)]^2$ and
$$
\|\Bbb K_2w\|_{L^q_b}\le C\|w\|_{L^q_b}
$$
and the lemma is proved.
\end{proof}
We will denote the operator obtained in the lemma by $\Nx P$ and the corresponding term in the Navier-Stokes equation will be denoted by $\Nx P(u\otimes u)$. Then, in particular
%$$
\begin{equation}\label{0.9}
\Nx P(u\otimes u): [L^4_b(\R^2)]^2\to [W^{-1,2}_b(\R^2)]^2.
\end{equation}
%$$
We are now ready to give the definition of a weak solution of problem \eqref{0.1}.
\begin{definition}\label{Def0.1} Let the external forces $g\in L^2_b(\R^2)$ be divergent free. A  function $u(t,x)$ is a weak solution of problem \eqref{0.1} if
%$$
\begin{equation}\label{0.10}
u\in L^\infty([0,T],L^2_b(\R^2)),\ \ \ \Nx u\in L^2_b([0,T]\times\R^2)
\end{equation}
%$$
and the equation is satisfied in the sense of distributions with $\Nx p=\Nx P(u\otimes u)$ defined in Lemma \ref{Lem0.1}. Note that, according to the embedding theorem, $u\in L^4_b([0,T]\times\R^2)$ and $u\otimes u\in L^2_b([0,T]\times\R^2)$ and, due to the previous lemma, $\Nx p\in L^2_b([0,T],W^{-1,2}_b(\R^2))$ and the equation \eqref{0.1} can be understood as equality in this space.
\end{definition}
\begin{remark} We emphasize once more that only the gradient of pressure $\nabla p$ is well-defined as an element of $L^2_b([0,T],W^{-1,2}_b)$, but the pressure itself may be unbounded as $|x|\to\infty$. To be more precise, the operator $\nabla P$ defined above satisfies
%$$
\begin{equation}\label{1.press}
\divv \nabla P(w)=\sum_{ij}\partial_{x_i}\partial_{x_j} w_{ij},\ \ \rot \nabla P(w)=0
\end{equation}
%$$
in the sense of distributions. These relations can be justified by approximating $w$ by finite
functions and passing to the limit analogously to Lemma \ref{Lem0.1}. Therefore, there is a function $p\in L^2([0,T],L^2_{loc}(\R^2))$ such that
$$
\nabla p=\nabla P(w),
$$
see \cite{Tem77}, but this function may grow as $|x|\to\infty$. Note also that the choice of $\nabla p=\nabla P(u\otimes u)$ is not unique. However, if $p_1$ and $p_2$ both satisfy \eqref{0.5} (for the same velocity field $u$), then the difference $p_1-p_2$ solves $\Delta(p_1-p_2)=0$ (in the sense of distributions) and, consequently is a {\it harmonic} function. Moreover, every harmonic function with bounded gradient is linear, so, if we want the velocity field $u$ to be in the proper uniformly local space, the most general choice of the pressure is
%$$
\begin{equation}
\nabla p=\nabla P(u\otimes u)+\vec C(t),
\end{equation}
%$$
where the constant vector $\vec C(t)$ depends only on time (and is independent of $x$) and $\nabla P$ is defined in Lemma \ref{Lem0.1}. In the present paper, we consider only the choice $\vec C(t)\equiv0$. In a fact, the vector $\vec C(t)$ should be treated as one more external data and can be chosen arbitrarily, but this does not lead to more general theory since everything can be reduced to the case of $\vec C\equiv 0$ by replacing the external force $g$ by $g-\vec C(t)$.
\end{remark}
\begin{remark}\label{Rem.non-div} Assumption \eqref{gdiv} on the external forces $g$ can be slightly relaxed. Namely, we may assume instead that
%$$
\begin{equation}\label{g.proj}
\Pi g\in [L^2_b(\R^2)]^2,
\end{equation}
where $\Pi$ is the proper extension of the Leray projector to the divergent free vector fields. In this case, instead of \eqref{0.5}, we have
$$
\Dx p=\sum_{j,j}\partial_{x_i}\partial_{x_j}(u_iu_j)-\divv g
$$
which gives
$$
\Nx p=\Nx P(u\otimes u)+\Nx(\Dx)^{-1}\divv g=\Nx P(u\otimes u)-(1-\Pi)g
$$
and, inserting this expression to \eqref{0.1}, we see that the problem is factually reduced to the case of divergent free external forces by replacing $g$ by $\Pi g$. However, even the assumption $g\in [L^\infty(\R^2)]^2$ (or $[C^\infty_b(\R^2)]^2$) does not imply that $\Pi g\in [L^2_b(\R^2)]^2$. In this case, we can only guarantee that $\Pi g\in [BMO(\R^2)]^2$, but the functions with bounded mean oscillation may grow as $|x|\to\infty$ (say, as  $\log|x|$, see \cite{L02}), so \eqref{g.proj} is not satisfied automatically.
\par
Note also that this condition is {\it necessary} if we want the solution $u(t)$ to be bounded as $|x|\to\infty$ (and belong to the proper uniformly local Soboloev space). Indeed, after replacing the pressure term, \eqref{0.1} reads
$$
\Dt u+(u,\Nx)u=\Dx u-\alpha u+\Nx P(u\otimes u)+\Pi g
$$
and this equation cannot have bounded (as $|x|\to\infty$) solutions $u(t)$ if $\Pi g$ is not bounded (as $|x|\to\infty$).
\end{remark}
We conclude this preliminary section by establishing the key estimate which allows us handle the pressure term in weighted energy estimates.
To this end, we introduce for every $x_0\in\R^2$ and $R>1$ the
cut-off function $\varphi_{R,x_0}$ which satisfies
%$$
\begin{equation}\label{0.11}
\varphi_{R,x_0}(x)\equiv1,\ for \ x\in B^R_{x_0},\ \ \ \varphi_{R,x_0}(x)\equiv 0,\ for \ x\notin B^{2R}_{x_0},
\end{equation}
%$$
and
%$$
\begin{equation}\label{0.12}
|\Nx\varphi_{R,x_0}(x)|\le CR^{-1}\varphi^{1/2}_{R,x_0}(x),
\end{equation}
%$$
where $C$ is independent of $R$ (obviously such family of cut-off functions exist). Then, the following result holds.

\begin{lemma}\label{Lem0.2} Let the exponents $1<p,q<\infty$, $\frac1p+\frac1q=1$, $w\in [L^p_b(\R^2)]^4$ and $v\in [W^{1,q}(\R^2)]^2$ be
divergence free. Then the following estimate holds:
%$$
\begin{equation}\label{0.15}
|(\Nx P(w),\varphi_{R,x_0}v)|\le C\int_{\R^2}\theta_{R,x_0}(x)\|w\|_{L^p(B^R_{x})}\,dx\cdot\|\varphi_{R,x_0}^{1/2}v\|_{L^q},
\end{equation}
%$$
where $C$ is independent of $R$ and $x_0$ and $\theta_{R,x_0}(x)$ is defined by \eqref{ttheta}.
\end{lemma}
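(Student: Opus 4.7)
The plan is to decompose $w=w_1+w_2$ with $w_1:=w\chi_{B^{4R}_{x_0}}$, correspondingly split $\nabla P(w)=\nabla p_1+\nabla p_2$ with $p_i:=K*w_i$, and estimate the two pieces by very different arguments. First I would establish everything for $w\in [C_0^\infty(\R^2)]^4$ (where $p_1,p_2$ are classically defined) and then extend by continuity as in Lemma~\ref{Lem0.1}. The far piece $\nabla p_2$ will be treated by pointwise decay of $\nabla K$ (which is smooth on the relevant set), while the near piece $\nabla p_1$ will be treated by $L^p$-boundedness of the double Riesz transform combined with integration by parts against the divergence-free test field~$v$.

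For $\nabla p_2$: when $y\in\supp\varphi_{R,x_0}\subset B^{2R}_{x_0}$ and $|x-x_0|\ge 4R$ one has $|x-y|\ge c(R+|x-x_0|)$, hence $|\nabla K(x-y)|\le C|x-y|^{-3}\le C\theta_{R,x_0}(x)$. This gives the uniform bound
\[
\|\nabla p_2\|_{L^\infty(\supp\varphi_{R,x_0})}\le C\int_{\R^2}\theta_{R,x_0}(x)|w(x)|\,dx.
\]
Combining with the two H\"older estimates $\|\varphi_{R,x_0}v\|_{L^1}\le CR^{2/p}\|\varphi_{R,x_0}^{1/2}v\|_{L^q}$ (obtained on $\supp\varphi_{R,x_0}$, whose area is $\le CR^2$) and $\int\theta_{R,x_0}|w|\,dx\le CR^{-2/p}\int\theta_{R,x_0}(x)\|w\|_{L^p(B^R_x)}\,dx$ (from a standard covering-plus-H\"older on balls of radius $R$), the powers of $R$ cancel and the contribution of $\nabla p_2$ has the form claimed in \eqref{0.15}.

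For $\nabla p_1$: the divergence-freeness of $v$ gives $\divv(\varphi_{R,x_0}v)=\nabla\varphi_{R,x_0}\cdot v$, so integration by parts yields $(\nabla p_1,\varphi_{R,x_0}v)=-(p_1,\nabla\varphi_{R,x_0}\cdot v)$. Using $|\nabla\varphi_{R,x_0}|\le CR^{-1}\varphi_{R,x_0}^{1/2}$, H\"older, and the Calder\'on--Zygmund $L^p$-bound $\|p_1\|_{L^p(\R^2)}\le C\|w_1\|_{L^p}=C\|w\|_{L^p(B^{4R}_{x_0})}$, one obtains
\[
|(\nabla p_1,\varphi_{R,x_0}v)|\le CR^{-1}\|w\|_{L^p(B^{4R}_{x_0})}\|\varphi_{R,x_0}^{1/2}v\|_{L^q}.
\]

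The main obstacle is the final conversion of $R^{-1}\|w\|_{L^p(B^{4R}_{x_0})}$ into the integral $\int\theta_{R,x_0}(x)\|w\|_{L^p(B^R_x)}\,dx$: applying \eqref{1.ul-wR} directly only produces the ``$p$-th power'' form $(\int\theta\|w\|^p_{L^p(B^R_x)}\,dx)^{1/p}$, which by H\"older is \emph{strictly larger} than the first-power quantity we need. I would sidestep this by covering $B^{4R}_{x_0}$ by a fixed (independent of $R$) number of half-size balls $B^{R/2}_{x_j}$ and using the elementary $\ell^p\hookrightarrow\ell^1$ inclusion on this finite sum, $\|w\|_{L^p(B^{4R}_{x_0})}\le\sum_j\|w\|_{L^p(B^{R/2}_{x_j})}$. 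For each such $j$, the observations $\theta_{R,x_0}(x)\ge cR^{-3}$ on $B^{R/4}_{x_j}$ and $B^R_x\supset B^{R/2}_{x_j}$ for $x\in B^{R/4}_{x_j}$ imply $\|w\|_{L^p(B^{R/2}_{x_j})}\le CR\int\theta_{R,x_0}(x)\|w\|_{L^p(B^R_x)}\,dx$; summing the (finitely many) contributions and cancelling the factor $R$ completes the conversion, and hence the proof of \eqref{0.15}.
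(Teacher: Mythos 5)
Your argument is correct, and it reaches \eqref{0.15} by a genuinely different decomposition than the paper's. The paper keeps the full density $w$ and splits the \emph{kernel} at scale $R$, writing $p=\widetilde K_{1,R}(w)+\widetilde K_{2,R}(w)$ with a cut-off $\psi_R$, integrates the whole pairing by parts to $-(p(w)-p_{x_0},\nabla\varphi_{R,x_0}\cdot v)$, and then must invoke the Lemari\'e-Rieusset trick of subtracting the constant $p_{x_0}=\widetilde K_{2,R}(w)(x_0)$: the undifferentiated far kernel $(1-\psi_R)K_{ij}$ decays only like $|x|^{-2}$, which is not good enough for the weighted estimate, whereas the difference of kernels decays like $|x-y|^{-3}$. (The subtraction is legitimate precisely because $\int\nabla\varphi_{R,x_0}\cdot v\,dx=\int\divv(\varphi_{R,x_0}v)\,dx=0$ for divergence-free $v$.) You instead split the \emph{density}, $w=w\chi_{B^{4R}_{x_0}}+w\chi_{(B^{4R}_{x_0})^c}$, integrate by parts only the near piece --- where the global, scale-invariant Calder\'on--Zygmund bound $\|p_1\|_{L^p}\le C\|w\|_{L^p(B^{4R}_{x_0})}$ makes any constant subtraction unnecessary --- and treat the far piece through the pointwise bound $|\nabla K(x-y)|\le C|x-y|^{-3}\le C\theta_{R,x_0}(x)$ for $y\in\supp\varphi_{R,x_0}$; since you keep the \emph{gradient} of the pressure on the far region, the cubic decay comes for free and the constant-subtraction trick is avoided altogether. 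Both routes use $\divv v=0$ and the same two analytic inputs (Calder\'on--Zygmund near the diagonal, cubic decay away from it), and your bookkeeping of the powers of $R$ ($R^{2/p}\cdot R^{-2/p}$ for the far piece, $R^{-1}\cdot R$ for the near piece) checks out. Your closing ``conversion'' step is also sound, and you have correctly identified that \eqref{1.ul-wR} as stated only yields the $p$-th power average; the first-power bound actually needed (both by you and, implicitly, by the paper in \eqref{0.17}) is exactly the finite-covering argument combined with $(\sum_j a_j^p)^{1/p}\le\sum_j a_j$ that you supply. What the paper's version buys is the stronger intermediate statement $\|p(w)-p_{x_0}\|_{L^p(B^{2R}_{x_0})}\le CR\int_{\R^2}\theta_{R,x_0}(x)\|w\|_{L^p(B^R_x)}\,dx$, a local $L^p$ control of the pressure itself modulo constants; what yours buys is a somewhat more elementary proof that never needs to make sense of the pressure (only of its gradient) away from the ball $B^{4R}_{x_0}$.
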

\begin{proof} Indeed, it is sufficient to verify the formula for $w,v\in C_0^\infty$ only. For general $w$'s and $v$'s belonging to the uniformly local spaces, the estimate can be justified in a standard way using the approximations. For the functions with finite support, the function $p$ is well-defined, so we may integrate by parts and write
%$$
\begin{equation}\label{1.intpart}
(\Nx P(w),\varphi_{R,x_0}v)=-(p(w)-p_{x_0},\Nx\varphi_{R,x_0}\cdot v),
\end{equation}
%$$
where $p_{x_0}=p_{x_0}(u,R)$ is a constant which will be specified below. Analogously to \eqref{0.7}, we write
%$$
\begin{multline}\label{0.16}
p(y)=  \sum_{ij}\int_{\R^2}\psi_R(x-y)K_{ij}(x-y)w_{ij}(x)\,dx+\\+\sum_{ij}\int_{\R^2}[(1-\psi_R(x-y))K_{ij}(x-y)]w_{ij}(x)\,dx:=
\widetilde K_{1,R}(w)+\widetilde K_{2,R}(w).
\end{multline}
%$$
where $\psi_R(x)=\psi(x/R)$ and $\psi$ is the same as in \eqref{0.7}. Actually, \eqref{0.16} coincides with \eqref{0.7} up to scaling $x$ and absence of the gradient. Thus,
using the scaling $x\to Rx$, $x_0\to Rx_0$ and $y\to Ry$, we obtain the operators independent of $R$. Estimating the first one
analogously to \eqref{0.8} and returning back to the unscaled variables, we have
%$$
\begin{multline}\label{0.17}
\|\widetilde K_{1,R}(w)\|_{L^p(B^{2R}_{x_0})}\le C\|w\|_{L^p(B^{3R}_{x_0})}\le\\\le C_1 R^{-2}\int_{|x-x_0|<4R}\|w\|_{L^p(B^R_{x})}\,dx\le
  C_2R\int_{\R^2}\theta_{R,x_0}(x)\|w\|_{L^p(B^R_{x})}\,dx,
\end{multline}
%$$
where we have also implicitly used \eqref{1.ul-wR}.
\par
The second operator $\widetilde K_{2,R}$ is a bit more delicate. Indeed, in contrast to \eqref{0.7}, the convolution kernel is not differentiated by $y$ and decays only as $\frac1{|x|^2}$ as $|x|\to\infty$ which is insufficient for the uniformly local and weighted estimates. However, following \cite{L02}, we use the trick with the properly chosen constant $p_{x_0}$ which allows to resolve the problem. Namely, we fix
%$$
\begin{equation}\label{0.18}
p_{x_0}:=\tilde K_{2,R}(w)(x_0)=\sum_{ij}\int_{\R^2}(1-\psi_R(x-x_0))K_{ij}(x-x_0)w_{ij}(x)\,dx.
\end{equation}
%$$
Then, the new convolution operator
$$
\widetilde K_{2,R}(w)-p_{x_0}=\sum\int\bar K_{ij}(x,y)w_{ij}\,dx
$$
has the  kernel
$$
\bar K_{ij}(x,y):=(1-\psi_R(x-y))K_{ij}(x-y)-(1-\psi_R(x-x_0))K_{ij}(x-x_0)
$$
 which decays as $|x-y|^{-3}$ when $|x-y|\to\infty$ and $|y-x_0|$ remains bounded, so the singularity at infinity disappear and the uniformly local estimates work. Indeed, in scaled variables $x=Rx'$, $y=Ry'$, $x_0=Rx_0'$, we obviously have
%$$
\begin{equation}\label{0.19}
|\bar K_{ij}(x',y')|\le C\frac1{1+|x'-y'|^3},\ \ |y'-x_0'|\le2
\end{equation}
%$$
and, therefore, again in scaled variables
$$
\|\widetilde K_{2,R}(w)-p_{x_0}\|_{L^p(B^{2}_{x_0'})}\le C\int_{x'\in\R^2}\frac1{1+|x'-x_0'|^3}\|w\|_{L^p(B^1_{x'})}\,dx'
$$
and returning back to the non-scaled variables, we see that
%$$
\begin{equation}\label{0.20}
\|\widetilde K_{2,R}(w)-p_{x_0}\|_{L^p(B^{2R}_{x_0})}\le  C_2R\int_{\R^2}\frac{1}{R^3+|x-x_0|^3}\|w\|_{L^p(B^R_{x})}\,dx.
\end{equation}
%$$
Combining \eqref{0.17} and \eqref{0.20}, we see that
$$
\|p(w)-p_{x_0}\|_{L^p(B^{2R}_{x_0})}\le C R\int_{\R^2}\theta_{R,x_0}(x)\|w\|_{L^p(B^{R}_{x})}\,dx,
$$
where $C$ is independent of $R$ and $x_0$. Finally, applying the H\"older inequality to \eqref{1.intpart}
 and using \eqref{0.12}, we arrive at the desired estimate \eqref{0.15} and finish the proof of the lemma.
\end{proof}

\section{Global well-posedness: the case of  spatially decaying initial data}\label{s3}

In this section, we will study the damped Navier-Stokes equations under the additional assumptions that
%$$
\begin{equation}\label{3.gu}
g\in \dot L^2_b(\R^2),\ \  u_0\in \dot L^2_b(\R^2).
\end{equation}
%$$
The main result of this section is the following theorem.
\begin{theorem}\label{Th0.1} Let the external forces $g$ and the initial data $u_0$ be divergence free and satisfy \eqref{3.gu}. Then, the damped Navier-Stokes problem
\eqref{0.1} possesses at least one global weak solution in the sense of Definition \ref{Def0.1} and this solution satisfies the estimate
%$$
\begin{equation}\label{3.global}
\|u(T)\|_{L^2_b(\R^2)}+\|\Nx u\|_{L^2_b([0,T]\times\R^2)}\le Q(u_0,g),
\end{equation}
%$$
where the constant $Q(u_0,g)$ is independent of $T\ge0$.
\end{theorem}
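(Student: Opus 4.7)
The plan is a weighted energy estimate for smooth finite-energy approximants, followed by passage to the limit; the small parameter $R^{-1}$ from $|\Nx\varphi_{R,x_0}|\le CR^{-1}\varphi_{R,x_0}^{1/2}$ and the pressure estimate of Lemma~\ref{Lem0.2} are the two key ingredients. Using the approximation results of Appendix~1, I first approximate $u_0, g \in \dot L^2_b$ by divergence-free smooth compactly supported fields $u_0^n, g^n$ converging in $L^2_b$. For such finite-energy data the classical 2D theory produces smooth global solutions $u^n$ that decay at spatial infinity, so the pressure is a bona fide function $p^n = P(u^n\otimes u^n)$ and all manipulations with the cut-off $\varphi_{R,x_0}$ are legitimate. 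Pairing \eqref{0.1} with $\varphi_{R,x_0}u^n$ and integrating by parts yields
\begin{equation*}
\tfrac{1}{2}\tfrac{d}{dt}\|\varphi_{R,x_0}^{1/2}u^n\|_{L^2}^2 + \|\varphi_{R,x_0}^{1/2}\Nx u^n\|_{L^2}^2 + \alpha\|\varphi_{R,x_0}^{1/2}u^n\|_{L^2}^2 = J_\Delta + J_{\rm in} + J_p + J_g,
\end{equation*}
where $J_\Delta = \tfrac{1}{2}(|u^n|^2,\Dx\varphi_{R,x_0})$ is an $O(R^{-2})$ commutator, $J_{\rm in} = \tfrac{1}{2}(|u^n|^2 u^n,\Nx\varphi_{R,x_0})$ is the non-vanishing cubic inertial term (since $\varphi_{R,x_0}u^n$ is not divergence-free), $J_p = (\Nx P(u^n\otimes u^n),\varphi_{R,x_0}u^n)$ is the pressure term, and $J_g = (g,\varphi_{R,x_0}u^n)$ the forcing.

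The forcing is handled by Cauchy--Schwarz, and $J_\Delta$ is a lower-order correction. For $J_{\rm in}$, the bound $|\Nx\varphi_{R,x_0}|\le CR^{-1}\varphi_{R,x_0}^{1/2}$ gives $|J_{\rm in}|\le CR^{-1}\int\varphi_{R,x_0}^{1/2}|u^n|^3\,dx$; localising the integrand on balls $B^R_y$, the 2D Ladyzhenskaya inequality $\|v\|_{L^4(B^R_y)}^2\le C\|v\|_{L^2(B^R_y)}\|v\|_{H^1(B^R_y)}$ averaged against the polynomial weight $\theta_{R,x_0}$ via \eqref{1.ul-wR}--\eqref{1.eqtheta} controls it. The pressure $J_p$ is estimated via Lemma~\ref{Lem0.2} with $p=q=2$: this reduces it to $C\|\varphi_{R,x_0}^{1/2}u^n\|_{L^2}\int\theta_{R,x_0}(y)\|u^n\|^2_{L^4(B^R_y)}\,dy$, and the same Ladyzhenskaya interpolation combined with Lemma~\ref{Lem1.weight} and Corollary~\ref{Cor1.conv} controls the $y$-integral by a uniformly local quantity. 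Setting $Y(t):=\sup_{x_0}\|\varphi_{R,x_0}^{1/2}u^n(t)\|_{L^2}^2$, which is equivalent up to $R$-independent constants to $\|u^n(t)\|_{L^2_{b,R}}^2$, one arrives at a differential inequality schematically of the form
\begin{equation*}
\tfrac{d}{dt}Y + \alpha Y + \|\Nx u^n\|_{L^2_{b,R}}^2 \le C(\|g\|_{L^2_b}^2+1) + CR^{-1}(Y+Y^{3/2})\|\Nx u^n\|_{L^2_{b,R}} + CR^{-2}Y.
\end{equation*}
Choosing $R$ large relative to the current value of $Y$, Young's inequality absorbs the nonlinear terms into the dissipation and yields $\tfrac{d}{dt}Y+\tfrac{\alpha}{2}Y\le Q(\|g\|_{L^2_b})$. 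A continuation-in-time argument, initialised via the spatial decay $u_0\in\dot L^2_b$ (which, through Proposition~\ref{Prop1.decay}, guarantees $Y(0)$ is small for large $R$ uniformly in $x_0$), extends the bound globally in time and uniformly in $n$, giving \eqref{3.global} for $u^n$.

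Finally, the uniform bounds $u^n\in L^\infty_tL^2_b\cap L^2_tH^1_b$, together with the equation, yield control of $\Dt u^n$ in a negative-order uniformly local space; Aubin--Lions on bounded space-time cylinders gives strong convergence of $u^n$ to some $u$ in $L^2_{\rm loc}$, sufficient to pass to the limit in $u^n\otimes u^n$ and, via the continuity established in Lemma~\ref{Lem0.1}, in $\Nx P(u^n\otimes u^n)$. The limit $u$ is thus a weak solution in the sense of Definition~\ref{Def0.1} and inherits \eqref{3.global}.

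The main obstacle is the cubic $J_{\rm in}$: in 2D the natural Ladyzhenskaya interpolation of $|u|^3$ produces $\|u^n\|_{L^2_{b,R}}\|\Nx u^n\|_{L^2_{b,R}}$, which is supercritical relative to the localised dissipation $\|\varphi_{R,x_0}^{1/2}\Nx u^n\|_{L^2}^2$. What rescues the closure is the combination of (i) the small $R^{-1}$ factor from $|\Nx\varphi_{R,x_0}|$, (ii) the polynomial-weight machinery of Lemmas~\ref{Lem1.weight} and \ref{Lem0.2}, and (iii) the spatial decay of $u_0$ and $g$, which allows the continuation bootstrap to start at arbitrarily small $Y$ for suitably large $R$.
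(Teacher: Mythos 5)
Your overall strategy is the paper's: multiply by $\varphi_{R,x_0}u$, treat the pressure with Lemma~\ref{Lem0.2}, exploit the $R^{-1}$ gain from $\Nx\varphi_{R,x_0}$ together with Ladyzhenskaya-type interpolation, start the continuation argument from the smallness supplied by Proposition~\ref{Prop1.decay}, and justify everything on compactly supported approximants (which is exactly how the paper sidesteps the continuity in time of $t\mapsto\sup_{x_0}(\cdot)$). The choice $p=q=2$ in Lemma~\ref{Lem0.2} instead of the paper's $p=3/2$, $q=3$ is a harmless variation.

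There is, however, a genuine gap in the closure step. You write the differential inequality for $Y(t):=\sup_{x_0}\|\varphi_{R,x_0}^{1/2}u^n(t)\|_{L^2}^2$ with the dissipation $\|\Nx u^n\|_{L^2_{b,R}}^2$ on the left. Two things go wrong there. First, after interpolating the cubic and pressure terms you are left with quantities of the form $\int_{\R^2}\theta_{R,x_0}(x)\|u\|_{L^2(B^R_x)}\|u\|_{W^{1,2}(B^R_x)}\,dx$; the Young step that absorbs the resulting $\beta\int\theta_{R,x_0}(x)\|u\|^2_{W^{1,2}(B^R_x)}\,dx$ needs the \emph{$\theta$-weighted} dissipation over all of $\R^2$ on the left-hand side, whereas the energy identity at a fixed $x_0$ only provides the single-ball dissipation $\|\varphi_{R,x_0}^{1/2}\Nx u\|_{L^2}^2$; the weighted integral of the gradient can be concentrated far from $x_0$ and is not controlled by the local one (nor by $\eps\|\Nx u\|^2_{L^2_{b,R}}$ at that same $x_0$). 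Second, one cannot pass from a family of inequalities $A_{x_0}+B_{x_0}\le C_{x_0}$ to $\sup_{x_0}A_{x_0}+\sup_{x_0}B_{x_0}\le\sup_{x_0}C_{x_0}$, so the schematic ODE for $Y$ with $\|\Nx u^n\|^2_{L^2_{b,R}}$ on the left does not follow from taking suprema. The paper repairs exactly this by introducing $Z_{R,y_0}(u)=\int\theta_{R,y_0}(x_0)\|u\|^2_{L^2_{\varphi_{R,x_0}}}dx_0$, i.e.\ by first integrating the whole family of local estimates against $\theta_{R,y_0}(x_0)$ (using \eqref{1.R-thetakey} and \eqref{1.eqtheta}); this produces the weighted dissipation $\int\theta_{R,y_0}(x)\|u\|^2_{W^{1,2}(B^R_x)}dx$ on the left, a genuine differential inequality \eqref{3.last} for $Z_{R,y_0}$ holding for each fixed $y_0$, a Gronwall bound for each $y_0$ separately, and only then a supremum in the integrated estimate; the quantity $\sup_{x_0}Z_{R,x_0}(u(t))$ enters only as a coefficient to be kept below $R/K$ by the continuation argument. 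You invoke the $\theta_{R,x_0}$ machinery in passing, but the argument as written does not actually perform this averaging before closing the estimate, and without it the absorption fails.
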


\begin{proof} We give below only the derivation of the key a priori estimate \eqref{3.global}. The existence of a solution can be
obtained after that  in a standard way by approximating the infinite-energy solution by the finite-energy ones and passing to the limit, see \cite{MZ08,Z07} for the details (see also some more details at the end of the proof).
\par
We multiply equation \eqref{0.1} on $\varphi_{R,x_0} u$ where $R$ is a big positive number which will be fixed below, $x_0\in\R^2$,  and the cut-off functions $\varphi_{R,x_0}$ satisfy \eqref{0.11} and \eqref{0.12}.
Then, after the integration over $x$ and standard transformations, we get
%$$
\begin{multline}\label{0.13}
\frac12 \frac d{dt}\|u(t)\|_{L^2_{\varphi_{R,x_0}}}^2+\alpha\|u\|^2_{L^2_{\varphi_{R,x_0}}}+\|\Nx(\varphi_{R,x_0}^{1/2}u)\|^2_{L^2}\le\\\le C\|g\|_{L^2_{\varphi_{R,x_0}}}^2+CR^{-2}\|u\|_{L^2(B^{2R}_{x_0})}^2+
 CR^{-1}\|u\|^3_{L^3(B^{2R}_{x_0})}+|(\Nx P(u),\varphi_{R,x_0}u)|.
\end{multline}
%$$
Here the constant $C$ is independent of $R$. To estimate the term containing pressure, we use Lemma \ref{Lem0.2} with $q=3$ and $p=3/2$. Then, due to
\eqref{0.15} together with the H\"older and Young inequalities,
%$$
\begin{multline}\label{0.huuge}
|(\Nx P(u),\varphi_{R,x_0}u)|\le C\int_{\R^2}\theta_{R,x_0}(x)\|u\otimes u\|_{L^{3/2}(B^R_{x})}\,dx\cdot\|u\|_{L^3(B^{2R}_{x_0})}\le\\\le
C\int_{\R^2}\theta_{R,x_0}(x)\|u\|^2_{L^3(B^{R}_{x})}\,dx\cdot\|u\|_{L^3(B^{2R}_{x_0})}\le \\\le C\(\int_{\R^2}\theta_{R,x_0}(x)\,dx\)^{1/3}\(\int_{\R^2}\theta_{R,x_0}(x)\|u\|^3_{L^3(B^{R}_{x})}\,dx\)^{2/3}\cdot\|u\|_{L^3(B^{2R}_{x_0})}\le\\\le CR^{-1/3}\(\int_{\R^2}\theta_{R,x_0}(x)\|u\|^3_{L^3(B^{R}_{x})}\,dx\)^{2/3}\cdot\|u\|_{L^3(B^{2R}_{x_0})}\le\\\le C\int_{\R^2}\theta_{R,x_0}(x)\|u\|^3_{L^3(B^{R}_{x})}\,dx+
CR^{-1}\|u\|_{L^3(B^{2R}_{x_0})}^3,
\end{multline}
%$$
where all constants are independent of $R\gg1$. Thus, \eqref{0.13} now reads
 %$$
\begin{multline}\label{2.13}
\frac d{dt}\|u(t)\|_{L^2_{\varphi_{R,x_0}}}^2+\alpha\|u\|^2_{L^2_{\varphi_{R,x_0}}}+\|\Nx u\|^2_{L^2(B^{R}_{x_0})}\le\\\le C\|g\|_{L^2_{\varphi_{R,x_0}}}^2+CR^{-2}\|u\|_{L^2(B^{2R}_{x_0})}^2+
 CR^{-1}\|u\|^3_{L^3(B^{2R}_{x_0})}+C\int_{\R^2}\theta_{R,x_0}(x)\|u\|^3_{L^3(B^{R}_{x})}\,dx.
\end{multline}
%$$
We  introduce
%$$
\begin{equation}\label{0.25}
Z_{R,y_0}(u):=\int_{x_0\in\R^2}\theta_{R,y_0}(x_0)\|u\|^2_{L^2_{\varphi_{R,x_0}}}\,dx_0.
\end{equation}
%$$
Then, using \eqref{1.eqtheta}, we have
%$$
\begin{equation}\label{3.Z}
C_2\int_{y\in\R^2}\theta_{R,y_0}(y)\|u\|^2_{L^2(B^R_y)}\,dy\le Z_{R,y_0}(u)\le C_1\int_{y\in\R^2}\theta_{R,y_0}(y)\|u\|^2_{L^2(B^R_y)}\,dy,
\end{equation}
%$$
where $C_i$ are independent of $R$. Multiplying now equation \eqref{2.13} on $\theta_{R,y_0}(x_0)$, integrating over $x_0\in\R^2$ and using \eqref{1.R-thetakey}, we see that, for sufficiently large $R$,
%$$
\begin{multline}\label{3.main}
\frac d{dt}Z_{R,x_0}(u(t))+2\beta Z_{R,x_0}(u(t))+2\beta \int_{x\in\R^2}\theta_{R,x_0}(x)\|u\|_{W^{1,2}(B^R_x)}^2\,dx\le\\\le C Z_{R,x_0}(g)+
 CR^{-1} \int_{x\in\R^2}\theta_{R,x_0}(x)\|u\|_{L^3(B^R_x)}^3\,dx,
\end{multline}
%$$
where the positive constants $C$ and $\beta$ are independent of $R$.
\par
Thus, we only  need to estimate the integral containing the $L^3$-norm of $u$ in the right-hand side. To this end, we use the interpolation inequality
%$$
\begin{equation}\label{3.int}
\|u\|^3_{L^3(B^R_{x_0})}\le C\|u\|^2_{L^2(B^R_{x_0})}\|u\|_{W^{1,2}(B^R_{x_0})},
\end{equation}
%$$
where $C$ is independent of $x_0$ and $R$. Then, using also the H\"older inequality, we end up with
%$$
\begin{multline}\label{3.qubic}
CR^{-1} \int_{x\in\R^2}\theta_{R,x_0}(x)\|u\|_{L^3(B^R_x)}^3\,dx\le C R^{-1}\int_{x\in\R^2}\theta_{R,x_0}(x)\|u\|_{L^2(B^R_x)}^2\|u\|_{W^{1,2}(B^R_{x})}\,dx\le\\\le CR^{-1}\|u\|_{L^2_{b,R}} \int_{x\in\R^2}\theta_{R,x_0}(x)\|u\|_{L^2(B^R_x)}\|u\|_{W^{1,2}(B^R_{x})}\,dx\le\\\le CR^{-2}\|u\|^2_{L^2_{b,R}} Z_{R,x_0}(u)+\beta \int_{x\in\R^2}\theta_{R,x_0}(x)\|u\|_{W^{1,2}(B^R_x)}^2\,dx.
\end{multline}
%$$
Inserting this estimate into the right-hand side of \eqref{3.main}, we  arrive at
%$$
\begin{multline}\label{0.27}
\frac d{dt}Z_{R,x_0}(u(t))+\beta Z_{R,x_0}(u(t))+\beta Z_{R,x_0}(u(t))(1-CR^{-2}\|u(t)\|^2_{L^2_{b,R}})+\\+\beta \int_{x\in\R^2}\theta_{R,x_0}(x)\|u\|_{W^{1,2}(B^R_x)}^2\,dx\le C Z_{R,x_0}(g)),
\end{multline}
%$$
where the positive constants $C$ and $\beta$ are independent of $R$.
\par
Finally, due to \eqref{1.ul-wR}
%$$
\begin{equation}\label{0.281}
\|u\|^2_{L^2_{b,R}}\le CR\sup_{x_0\in\Bbb R^2}Z_{R,x_0}(u),
\end{equation}
%$$
and \eqref{0.27} reads
 %$$
\begin{multline}\label{3.last}
\frac d{dt}Z_{R,x_0}(u(t))+\beta Z_{R,x_0}(u(t))+\beta Z_{R,x_0}(u(t))(1-KR^{-1}\sup_{x_0\in\R}Z_{R,x_0}(u(t)))+\\+\beta \int_{x\in\R^2}\theta_{R,x_0}(x)\|u\|_{W^{1,2}(B^R_x)}^2\,dx\le C Z_{R,x_0}(g)),
\end{multline}
%$$
where the positive constants $K$, $\beta$ and $C$ are independent of $R\gg1$.
\par
We claim that the key estimate \eqref{3.last} is sufficient to finish the proof of the theorem.
  Indeed, \eqref{3.last} gives the desired estimate
%$$
\begin{multline}\label{0.28}
Z_{R,x_0}(u(t))+\int_t^{t+1}\|u(t)\|^2_{L^2_{\theta_{R,x_0}}}\,dt\le\\\le C Z_{R,x_0}(u_0)e^{-\beta t}+C Z_{R,x_0}(g)\le C(Z_{R,x_0}(u_0)+Z_{R,x_0}(g))
\end{multline}
%$$
(which implies \eqref{3.global} just by taking the supremum via $x_0\in\R^2$)
if we are able to fix $R=R(u_0,g)$ in such way that
%$$
\begin{equation}\label{0.29}
R^{-1}\sup_{x_0\in\R}Z_{R,x_0}(u(t))\le\frac1K\ \
\end{equation}
%$$
holds for every $t\in\R_+$.
\par
On the other hand, due to the decay assumption $u_0,g\in \dot L^2_b(\R^2)$ and Proposition
\ref{Prop1.decay}, we may fix $R=R(u_0,g)$ in such way that
%$$
\begin{equation}\label{0.30}
CR^{-1}(1+Z_{R,x_0}(u_0)+Z_{R,x_0}(g))\le\frac1{2K},\ \ x_0\in\Bbb R^2,
\end{equation}
%$$
where $C$ and $K$ are the same as in \eqref{0.28} and \eqref{3.last} respectively.
\par
Then \eqref{0.29} is satisfied for $t=0$ and the standard continuity arguments show that both \eqref{0.29} and  \eqref{0.28} hold for all $t\in\R_+$
if we a priori know that the function
%$$
\begin{equation}\label{3.cont}
t\to\sup_{x_0\in\R^2} Z_{R,x_0}(u(t))
\end{equation}
%$$
is {\it continuous} as a function of time. Thus, the desired estimate \eqref{3.global} is verified under the additional continuity assumption for the function \eqref{3.cont}.
\par
However, in contrast to the case of finite energy solutions, the regularity of a weak solution stated in Definition \ref{Def0.1} is not enough to establish that $u\in C([0,T], L^2_b(\R^2))$, so a priori, function \eqref{3.cont} may be not continuous (in a fact, it is continuous a posteriori due to the decay assumption $u_0,g\in\dot L^2_b(\R^2)$, but in order to verify that, we need some extra estimates).
\par
We overcome the continuity problem in an alternative way (following \cite{Z07}), namely, we cut-off the external force $g$ and consider the finite support external forces $g^N(x):=g(x)\varphi_{N,0}(x)$ (where $\varphi_{R,x_0}(x)$ are defined by \eqref{0.11} and \eqref{0.12}), approximate the divergence free initial data $u_0$ by the sequence $u_0^N$ of the divergence free vector fields with finite supports using Corollary \ref{CorA.app}, see Appendix, and will apply estimates \eqref{0.28} and \eqref{0.29} only for the approximative solutions $u^N(t)$ which correspond to the initial data $u_0^N$ and external forces $g^N$. Indeed, for every $N$, $u^N$ is a finite-energy solution ($u^N\in C([0,T],L^2(\R^2))$) for which the continuity \eqref{3.cont}
is obvious and, due to Corollary \ref{CorA.app}, we may fix $R=R(u_0,g)\gg1$ in such way that \eqref{0.30} will hold {\it uniformly} for all $N$.
Thus, we have justified the uniform with respect to $N$ estimate
%$$
\begin{equation}\label{3.un}
Z_{R,x_0}(u^N(t))+\int_t^{t+1}\|u^N(t)\|^2_{W^{1,2}_{\theta_{R,x_0}}}\,dt\le C(Z_{R,x_0}(u_0)+ Z_{R,x_0}(g)),
\end{equation}
%$$
where $C$ is independent of $t\in\R_+$ and $x_0\in\R^2$. Passing now to the limit $N\to\infty$ (in the local topology of $L^2_{loc}(\R^2)$),
we construct the desired weak solution $u$ of the Navier-Stokes equations which will satisfy  \eqref{3.global} and finish the proof of the theorem.
\end{proof}
Remind that we have constructed a weak solution $u(t)$ of the Navier-Stokes problem \eqref{0.1} which satisfies the key estimate \eqref{3.global}. Up to the moment, we do not know whether or not all weak solutions satisfy it. However, the next theorem shows that the weak solution is unique and, by this reason, \eqref{3.global} holds for all solutions.

\begin{theorem}\label{Th3.unique} The weak solution in the sense of Definition \ref{Def0.1} is unique. Moreover, for any two solutions $u_1$ and $u_2$ of problem \eqref{0.1}, we have
%$$
\begin{equation}\label{uniqueness}
\|u_1(t)-u_2(t)\|_{L^2_{\theta_{R,x_0}}}\le C_T\|u_1(0)-u_2(0)\|_{L^2_{\theta_{R,x_0}}},
\end{equation}
%$$
where the constant $C$ depends on $T$, $R$, $u_1$ and $u_2$, but is independent of $x_0\in\R^2$.
\end{theorem}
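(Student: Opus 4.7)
Set $v:=u_1-u_2$. Using the bilinearity of the tensor product together with the linearity of $\Nx P$ (Lemma \ref{Lem0.1}), the function $v$ is a distributional solution of
\begin{equation*}
\Dt v-\Dx v+\alpha v+(u_1,\Nx)v+(v,\Nx)u_2=-\Nx P(u_1\otimes v+v\otimes u_2),\qquad \divv v=0,
\end{equation*}
on $[0,T]\times\R^2$. The goal is to obtain, via a weighted energy argument close to the one used in the proof of Theorem \ref{Th0.1}, a Gronwall-type inequality for the quantity $Z_{R,y_0}(v)$ defined in \eqref{0.25}; the estimate \eqref{uniqueness} and the uniqueness statement (obtained by putting $v(0)\equiv0$) will then follow through the norm equivalence \eqref{3.Z}.

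First I would multiply the equation for $v$ by $\varphi_{R,x_0}v$ and integrate in space. The divergence-free condition for $u_1$ reduces $((u_1,\Nx)v,\varphi_{R,x_0}v)$ to $\tfrac12\int|v|^2(u_1\cdot\Nx\varphi_{R,x_0})\,dx$, which is of order $R^{-1}$ by \eqref{0.12} and, after averaging in $x_0$ against $\theta_{R,y_0}$ and using \eqref{1.R-thetakey}, becomes harmless exactly as in \eqref{3.qubic}. The stretching term $((v,\Nx)u_2,\varphi_{R,x_0}v)$ is the heart of the matter; I would control it through Hölder together with the local 2D Ladyzhenskaya inequality
\begin{equation*}
\|v\|_{L^4(B^R_x)}^2\le C\|v\|_{L^2(B^R_x)}\|v\|_{W^{1,2}(B^R_x)},
\end{equation*}
so that after Young's inequality its contribution is bounded by
\begin{equation*}
C\bigl(1+\|\Nx u_2\|_{L^2_b}^2\bigr)\|v\|_{L^2_{\varphi_{R,x_0}}}^2+\tfrac14\|\Nx(\varphi_{R,x_0}^{1/2}v)\|_{L^2}^2,
\end{equation*}
with the dissipative piece absorbed into the left-hand side. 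For the pressure I would apply Lemma \ref{Lem0.2} with $p=q=2$ to $w=u_1\otimes v+v\otimes u_2$ and then split each $\|u_i\otimes v\|_{L^2(B^R_x)}\le\|u_i\|_{L^4(B^R_x)}\|v\|_{L^4(B^R_x)}$, invoking Ladyzhenskaya once more; the resulting bound has the same structure with a coefficient depending on $\|u_1\|_{L^2_b}$, $\|u_2\|_{L^2_b}$ and the corresponding $H^1_b$ norms.

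Once the pointwise-in-$x_0$ differential inequality
\begin{equation*}
\tfrac{d}{dt}\|v\|_{L^2_{\varphi_{R,x_0}}}^2+\tfrac12\|\Nx(\varphi_{R,x_0}^{1/2}v)\|_{L^2}^2\le \Phi(t)\,\|v\|_{L^2_{\varphi_{R,x_0}}}^2+R^{-1}\Psi_{R,x_0}(v)
\end{equation*}
is established, with $\Phi(t)=C(1+\|\Nx u_1\|_{L^2_b}^2+\|\Nx u_2\|_{L^2_b}^2)$ and $\Psi_{R,x_0}$ a remainder that averages tamely against $\theta_{R,y_0}$, I would multiply by $\theta_{R,y_0}(x_0)$, integrate over $x_0\in\R^2$, and use \eqref{1.R-thetakey} together with \eqref{1.eqtheta} to close the bound on $Z_{R,y_0}(v)$. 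Since $\int_0^T\Phi(t)\,dt<\infty$ by \eqref{3.global}, Gronwall's lemma yields $Z_{R,y_0}(v(t))\le e^{C_T}Z_{R,y_0}(v(0))$ uniformly in $y_0\in\R^2$, which is exactly \eqref{uniqueness}.

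The main obstacle is the joint treatment of the stretching term and the pressure term: both are cubic in $v$, and to close the estimate one must absorb every gradient of $v$ into the single dissipative term $\|\Nx(\varphi_{R,x_0}^{1/2}v)\|^2$ with a coefficient strictly less than $1$, uniformly in $R$ and $x_0$; this is what forces the use of the sharp 2D splitting $\|v\|_{L^4}^2\simeq\|v\|_{L^2}\|v\|_{H^1}$ rather than a crude Sobolev embedding, and what makes the exponent $3$ in the weight $\theta_{R,x_0}$ (via \eqref{1.R-thetakey}) quantitatively important when passing from the local to the averaged estimate. A secondary technical point is that a merely weak solution need not a priori satisfy the energy identity, so the test procedure above must be justified by the same approximation by finite-energy solutions that is used at the end of the proof of Theorem \ref{Th0.1}.
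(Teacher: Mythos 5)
Your plan follows the same route as the paper: write the equation for $v=u_1-u_2$, test with $\varphi_{R,x_0}v$, control the pressure via Lemma \ref{Lem0.2}, use the local Ladyzhenskaya inequality, average against $\theta_{R,y_0}$ and close with Gronwall. The choice $p=q=2$ in Lemma \ref{Lem0.2} (the paper takes $p=4/3$, $q=4$, so that only $\|u_i\|_{L^2(B^R_x)}$ -- bounded uniformly in time -- enters the pressure bound, rather than $\|u_i\|_{L^4}$ and hence the $W^{1,2}$ norms of $u_i$) is a harmless variant. But there is one genuine gap in the way you close the argument.

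You first average over $x_0$ to get an inequality for $Z_{R,y_0}(v)$ and then apply a single Gronwall with
$\Phi(t)=C\bigl(1+\|\Nx u_1(t)\|_{L^2_b}^2+\|\Nx u_2(t)\|_{L^2_b}^2\bigr)$, asserting $\int_0^T\Phi\,dt<\infty$ by \eqref{3.global}. This is not justified: the definition of a weak solution (and estimate \eqref{3.global}) controls
$\sup_{x_0}\int_0^T\|\Nx u_i(t)\|^2_{L^2(B^1_{x_0})}\,dt$, \emph{not} $\int_0^T\sup_{x_0}\|\Nx u_i(t)\|^2_{L^2(B^1_{x_0})}\,dt$, and the latter can be infinite even when the former is finite (concentrate larger and larger gradients on shorter and shorter disjoint time intervals around centers $x_0$ escaping to infinity). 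The paper avoids this by keeping the Gronwall coefficient \emph{local}, namely $C\|\Nx u_2(t)\|^2_{L^2(B^{2R}_{x_0})}$, applying Gronwall for each fixed $x_0$ first (where $\int_0^T\|\Nx u_2\|^2_{L^2(B^{2R}_{x_0})}\,dt\le C_T$ uniformly in $x_0$ is available), and only afterwards multiplying by $\theta_{R,y_0}(x_0)$, integrating in $x_0$, and running a second Gronwall for $Z_{R,y_0}(v)$. You should reorder your argument accordingly; with the uniform-in-$x_0$ local bound in hand the rest of your plan goes through. A minor additional point: you should not invoke \eqref{3.global} at all, since that estimate is proved under the decay hypothesis \eqref{3.gu}, whereas the uniqueness theorem is stated (and proved in the paper) for arbitrary weak solutions using only the regularity in Definition \ref{Def0.1}.
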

\begin{proof} Let $u_1(t)$ and $u_2(t)$ be two weak solutions of \eqref{0.1} and $v(t):=u_1(t)-u_2(t)$. Then, this function solves
%$$
\begin{equation}\label{2.unieq}
\Dt v+(u_1,\Nx) v+(v,\Nx) u_2=\Dx v-\alpha v+\nabla P(w_1-w_2),\ \ \divv v=0,
\end{equation}
%$$
where $w_i=u_i\otimes u_i$. Multiplying this equation on $\varphi_{R,x_0}v$ (where the cut-off functions $\varphi_{R,x_0}(x)$ are the same as in \eqref{0.11} and \eqref{0.12}), after the integration by parts and straightforward estimates, we get
%$$
\begin{multline}\label{3.estdif}
\frac12\frac{d}{dt}\|v\|^2_{L^2_{\varphi_{R,x_0}}}+\alpha\|v\|^2_{L^2_{\varphi_{R,x_0}}}+\|\Nx (v\varphi^{1/2}_{R,x_0})\|^2_{L^2(B^{2R}_{x_0})}\le
C\|\Nx u_2\|_{L^2(B^{2R}_{x_0})}\|v\varphi^{1/2}_{R,x_0}\|^2_{L^4}+\\+C R^{-1}\|u_1\|_{L^2(B^{2R}_{x_0})}\|v\|^2_{L^4(B^{2R}_{x_0})}+|(\nabla P(w_1-w_2),v\varphi_{R,x_0})|+CR^{-1}\|v\|^2_{L^2(B^{2R}_{x_0})}.
\end{multline}
%$$
We start with estimating the most complicated term containing pressure. Using Lemma \ref{Lem0.2} together with the fact that $\|u_i(t)\|^2_{L^2(B^R_{x_0})}$ is bounded (by the definition of a weak solution), analogously to \eqref{0.huuge}, we have
%$$
\begin{multline}
|(\nabla P(w_1-w_2),v\varphi_{R,x_0})|\le C\int_{\R^2}\theta_{R,x_0}(x)\|w_1-w_2\|_{L^{4/3}(B^R_x)}\,dx\cdot\|v\|^2_{L^4(B^{2R}_{x_0})}\le\\\le
C\int_{\R^2}\theta_{R,x_0}(x)(\|u_1\|_{L^2(B^R_x)}+\|u_2\|_{L^2(B^R_x)})\|v\|_{L^4(B^R_x)}\,dx\cdot\|v\|_{L^4(B^{2R}_{x_0})}\le\\\le
 C_{u_1,u_2}\int_{\R^2}\theta_{R,x_0}(x)\|v\|_{L^4(B^R_x)}\,dx\cdot\|v\|_{L^4(B^{2R}_{x_0})}\le\\\le C\|v\|^2_{L^4(B^{2R}_{x_0})}+
 C_{u_1,u_2}^2R^{-1}\int_{\R^2}\theta_{R,x_0}(x)\|v\|^2_{L^4(B^R_{x})}\,dx\le\\\le C\|v\|^2_{L^4(B^{2R}_{x_0})}+
 C R\int_{\R^2}\theta_{R,x_0}(x)\|v\|^2_{L^4(B^R_{x})}\,dx,
\end{multline}
%$$
where the constant $C$ depends on the $L^2_b$-norms of the solutions $u_1$ and $u_2$, but is independent of $R$. We now estimate the more standard first term in the right-hand side of \eqref{3.estdif} using the interpolation inequality
%$$
\begin{equation}\label{3.int-lad}
\|U\|_{L^4}^2\le C\|U\|_{L^2}\|\Nx U\|_{L^2}.
\end{equation}
%$$
This gives
%$$
\begin{multline}
C\|\Nx u_2\|_{L^2(B^{2R}_{x_0})}\|v\varphi^{1/2}_{R,x_0}\|^2_{L^4}\le C_1\|\Nx u_2\|_{L^2(B^{2R}_{x_0})}\|v\|_{L^2_{\varphi_{R,x_0}}}
\|\Nx(v\varphi^{1/2}_{R,x_0})\|_{L^2}\le\\\le C_2\|\Nx u_2\|^2_{L^2(B^{2R}_{x_0})}\|v\|^2_{L^2_{\varphi_{R,x_0}}}+\frac12\|v\varphi^{1/2}_{R,x_0}\|^2_{L^2}.
\end{multline}
%$$
Inserting the obtained estimates into the right-hand side of \eqref{3.estdif} and using that the $L^2_b$-norms of $u_i(t)$ are under the control, we end up with
%$$
\begin{multline}\label{3.estgr}
\frac d{dt}\|v\|^2_{L^2_{\varphi_{R,x_0}}}-C\|\Nx u_2\|^2_{L^2(B^{2R}_{x_0})}\|v\|^2_{L^2_{\varphi_{R,x_0}}}+\|\Nx v\|^2_{L^2(B^R_{x_0})}\le\\\le C_R\|v\|^2_{L^4(B^{2R}_{x_0})}+
C_R\int_{\R^2}\theta_{R,x_0}(x)\|v\|^2_{L^4(B^{R}_x)}\,dx,
\end{multline}
%$$
where the constants $C_R$ depend on $R$ and on the $L^2_b$-norms of the solutions $u_1$ and $u_2$, but are independent of $x_0\in\R^2$. Moreover, by the definition of a weak solutions,
$$
\int_0^T\|\Nx u_2(t)\|^2_{L^2(B^{2R}_{x_0})}\,dt\le C_T,
$$
where $C_T$ depends on $T$, but is independent of $x_0$. Thus, we can apply the Gronwall's inequality to \eqref{3.estgr} which gives
%$$
\begin{multline}\label{3.after}
\|v(t)\|^2_{L^2_{\varphi_{R,x_0}}}+\beta\int_0^t\|\Nx v(s)\|^2_{L^2(B^{2R}_{x_0})}\,ds\le C_{R,T}\|v(0)\|^2_{L^2_{\varphi_{R,x_0}}}+\\+
C_{R,T}\int_0^t\|v(s)\|^2_{L^4(B^{2R}_{x_0})}\,ds+C_{R,T}\int_0^t\int_{\R^2}\theta_{R,x_0}(x)\|v(s)\|^2_{L^4(B^R_x)}\,ds\,dt,
\end{multline}
%$$
where the constants $\beta$ and $C_{R,T}$ are independent of $x_0\in\R^2$. This allows us to multiply \eqref{3.after} by $\theta_{R,y_0}(x_0)$, integrate over $x_0\in\R^2$ and use \eqref{1.thetakey} and \eqref{1.ul-wR} to obtain
%$$
\begin{multline}
\|v(t)\|^2_{L^2_{\theta_{R,x_0}}}+\beta\int_0^t\|\Nx v(s)\|^2_{L^2_{\theta_{R,x_0}}}\,ds\le\\\le C_{R,T}\|v(0)\|^2_{L^2_{\theta_{R,x_0}}}+ C_{R,T}\int_0^t\int_{\R^2}\theta_{R,x_0}(x)\|v\|^2_{L^4(B^R_x)}\,dx\,ds.
\end{multline}
Estimating the $L^4$-norm in the right-hand side with the help of \eqref{3.int-lad}, we finally arrive at
%$$
\begin{equation}
\|v(t)\|^2_{L^2_{\theta_{R,x_0}}}+\frac\beta2\int_0^t\|\Nx v(s)\|^2_{L^2_{\theta_{R,x_0}}}\,ds\le C_{R,T}\|v(0)\|^2_{L^2_{\theta_{R,x_0}}}+ C_{R,T}\int_0^t\|v(s)\|^2_{L^2_{\theta_{R,x_0}}}\,ds.
\end{equation}
Applying once more the Gronwall's inequality, we derive the desired estimate \eqref{uniqueness} and finish the proof of the theorem.
\end{proof}
\begin{remark} Note that the uniqueness is proved {\it without} assuming the spatial decay condition \eqref{3.gu} and, therefore, holds for any weak solutions of the Navier-Stokes problem in the sense of Definition \ref{Def0.1}.
\end{remark}

\section{Global well-posedness: the case of  spatially non-decaying initial data}\label{s4}

The aim of this section is to obtain the analogue of Theorem \ref{Th0.1} {\it without} the extra spatial decay condition \eqref{3.gu} on the initial data. Recall, that estimate \eqref{3.global} has been obtained using the purely energetic methods (weighted $L^2$-estimates) without any use of the vorticity estimates (which are traditionally the key technical tools for studying the Navier-Stokes equations in the whole space $\R^2$). However, this energy method requires some decay of the initial data at infinity (although the rate of this decay may be arbitrarily slow) and
 we do not know how to remove this assumption remaining in the class of weighted energy estimates.
 \par
 In the present section, we will show that the extra decay assumption \eqref{3.gu} can be nevertheless removed if we combine the method presented in the proof of Theorem \ref{Th0.1} with the classical vorticity estimates. Since these estimates require the initial data $u_0$ to be more regular than just $u_0\in L^2_b(\R^2)$, we start with reminding the results on the {\it local} solvability and local smoothing property for the solutions for the Navier-Stokes equation \eqref{0.1}.

 \begin{proposition}\label{Prop4.local} Let $u_0,g\in L^2_b(\R^2)$ be divergence free. Then, there exists time moment $T=T(\|u_0\|_{L^2_b},\|g\|_{L^2_b})>0$ and a unique weak solution of problem \eqref{0.1} defined on the time interval $t\in[0,T]$.
 \end{proposition}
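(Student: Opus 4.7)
The uniqueness assertion is already covered by Theorem \ref{Th3.unique}, whose proof nowhere used the decay condition \eqref{3.gu}; so the real task is to establish local existence for arbitrary divergence-free $u_0, g \in L^2_b(\R^2)$. The plan is to revisit the a priori estimate from the proof of Theorem \ref{Th0.1} and replace the global smallness mechanism (which needed the decay \eqref{3.gu} and Proposition \ref{Prop1.decay}) by a short-time quadratic ODE comparison: in the non-decaying setting we cannot make $R^{-1}\sup_{x_0} Z_{R,x_0}(u_0)$ small by enlarging $R$, but we can still absorb this term on a short time interval by using the continuity of the relevant norm in $t$.

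As in the proof of Theorem \ref{Th0.1}, I would begin by approximating the data by finite-energy ones. Using Corollary \ref{CorA.app} from the appendix, pick divergence-free $u_0^N$ of compact support with $u_0^N \to u_0$ in $L^2_{\mathrm{loc}}$ and $\|u_0^N\|_{L^2_b} \le 2\|u_0\|_{L^2_b}$ uniformly in $N$, and truncate $g^N := \varphi_{N,0} g$. For each $N$ one has a finite-energy (in particular $C([0,\infty); L^2(\R^2))$) solution $u^N$ of \eqref{0.1}, so that for any fixed $R \ge 1$ the quantity $Y^N(t) := \sup_{x_0 \in \R^2} Z_{R,x_0}(u^N(t))$ is continuous in $t$. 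Dropping the non-negative dissipative terms in \eqref{3.last} and then taking the supremum in $x_0$ gives, in the sense of upper Dini derivatives, the scalar differential inequality
\begin{equation*}
\tfrac{d^+}{dt} Y^N(t) \le \beta K R^{-1} Y^N(t)^2 + C \sup_{x_0} Z_{R,x_0}(g^N).
\end{equation*}
By \eqref{ulR} the quantities $Y^N(0)$ and $\sup_{x_0} Z_{R,x_0}(g^N)$ are bounded by $CR\|u_0\|_{L^2_b}^2$ and $CR\|g\|_{L^2_b}^2$ respectively, uniformly in $N$. Comparing with the explicit solution of the model ODE $y' = ay^2 + b$, $y(0) = y_0$, produces a time $T = T(\|u_0\|_{L^2_b}, \|g\|_{L^2_b}) > 0$ and a constant $M$ with $Y^N(t) \le M$ for every $t \in [0,T]$ and every $N$. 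Reinserting the dissipative term then yields uniform-in-$N$ bounds on $u^N$ in $L^\infty([0,T]; L^2_b)$ and on $\nabla u^N$ in $L^2_b([0,T] \times \R^2)$.

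With these uniform estimates at hand, passage to the limit $N \to \infty$ is standard: an Aubin--Lions argument applied locally in $x$ gives strong convergence of a subsequence $u^N \to u$ in $L^2_{\mathrm{loc}}([0,T]\times\R^2)$, which is enough to pass to the limit in the quadratic nonlinearity; Lemma \ref{Lem0.1} guarantees that $\nabla P(u^N \otimes u^N) \to \nabla P(u \otimes u)$ in $L^2([0,T]; W^{-1,2}_{\mathrm{loc}})$, so that the limit is a weak solution in the sense of Definition \ref{Def0.1}. The main technical obstacle is the honest treatment of the supremum $Y^N$: it is only continuous in $t$, so differentiating naively is not allowed and one must either work with upper Dini derivatives and a comparison principle, or first integrate the pointwise-in-$x_0$ inequality \eqref{3.last} in time and only afterwards take the supremum in $x_0$, using that both the initial data and the forcing contribute terms that are already uniformly bounded in $x_0$. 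Once local existence on $[0,T]$ is in place, uniqueness on the same interval is inherited from Theorem \ref{Th3.unique} verbatim.
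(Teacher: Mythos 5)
Your proposal is correct and follows essentially the same route as the paper: the paper also derives the local bound from the weighted estimate \eqref{0.27} (equivalently \eqref{3.last}), integrated in time and with the supremum over $x_0$ taken afterwards, which yields the integral inequality \eqref{4.local} with the quartic term $\int_0^t\|u(s)\|^4_{L^2_{b,R}}\,ds$ — i.e.\ exactly your Riccati-type comparison $y'\le ay^2+b$ — and then invokes the same approximation by finite-energy solutions to justify the argument and produce existence. Your explicit handling of the continuity/Dini-derivative issue and of the limit passage is a more detailed rendering of steps the paper leaves implicit, not a different method.
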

 \begin{proof} Indeed, the uniqueness of the weak solution is verified in Theorem \ref{Th3.unique}, so we only need the local existence. To this end, as usual, it is sufficient to verify the proper {\it local} a priori estimate for the weak solution on a small time interval $t\in[0,T]$. To obtain such an estimate, in turn, it is sufficient to integrate estimate \eqref{0.27} (which holds for every weak solution) in time and take a supremum over $x_0\in\R^2$. Then, after the straightforward estimates, we end up with
 %$$
 \begin{equation}\label{4.local}
 \|u(t)\|_{L^2_{b,R}}^2+\sup_{x_0\in\R^2}\int_0^t\|\Nx u(s)\|^2_{L^2_{\theta_{R,x_0}}}\,ds\le C(\|u_0\|^2_{L^2_{b,R}}+\|g\|^2_{L^2_{b,R}})+C\int_0^t\|u(s)\|^4_{L^2_{b,R}}\,ds,
 \end{equation}
 %$$
 for some constant $C$ which depends on $R$. Estimate \eqref{4.local} is enough to conclude that there exists $T=T(\|u_0\|_{L^2_{b,R}},\|g\|_{L^2_{b,R}})$ such that
 %$$
 \begin{equation}\label{4.locest}
 \|u\|_{L^\infty([0,T],L^2_{b,R})}+\|u\|_{L^2_b([0,T],W^{1,2}_b)}\le C(\|u_0\|_{L^2_{b,R}}+\|g\|_{L^2_{b,R}})
 \end{equation}
 %$$
 and the proposition is proved.
 \end{proof}

\begin{proposition}\label{Prop4.smoo} Let the assumptions of Proposition \ref{Prop4.local} hold. Then, the local weak solution $u(t)$ becomes smoother: $u(t)\in W^{1,2}_b(\R^2)$ for all $t>0$ and the following estimate holds:
%$$
\begin{equation}\label{4.locsm}
\|u\|_{L^\infty([t,T],W^{1,2}_b)}+\|u\|_{L^2_b([t,T],W^{2,2}_b)}\le Ct^{-1/2}Q(\|g\|_{L^2_b}+\|u_0\|_{L^2_b}),
\end{equation}
%$$
where $T\ll1$ is the same as in Proposition \ref{Prop4.local}, $t\in(0,T]$, and the monotone increasing function $Q$ is independent of the concrete choice of $u(t)$.
\end{proposition}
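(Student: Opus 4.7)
The plan is to derive a weighted $H^1$-energy inequality for $u(t)$, apply the standard parabolic multiplication-by-$t$ trick to generate the $t^{-1/2}$ factor, and localize by integration against the weight $\theta_{R,y_0}(x_0)$ exactly as in the proof of Theorem~\ref{Th0.1}. Concretely, I would test \eqref{0.1} against the divergence-free multiplier $-\Dx u \cdot \varphi_{R,x_0}$ (with $\varphi_{R,x_0}$ as in \eqref{0.11}--\eqref{0.12}) and, after integration by parts and absorption of cut-off commutator terms of size $R^{-1}$, arrive at
\begin{equation*}
\frac{d}{dt}\|\Nx u\|^2_{L^2_{\varphi_{R,x_0}}} + \|\Dx u\|^2_{L^2_{\varphi_{R,x_0}}} \le |I_{\rm inert}| + |I_{\rm press}| + C\|g\|^2_{L^2_{\varphi_{R,x_0}}} + CR^{-2}\|u\|^2_{W^{1,2}(B^{2R}_{x_0})},
\end{equation*}
with $I_{\rm inert} = ((u,\Nx)u,\varphi_{R,x_0}\Dx u)$ and $I_{\rm press} = (\Nx p,\varphi_{R,x_0}\Dx u)$.

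Since $\divv(\Dx u) = 0$ whenever $\divv u = 0$, Lemma~\ref{Lem0.2} applies to $I_{\rm press}$ with $p=q=2$, yielding
\begin{equation*}
|I_{\rm press}| \le C\int_{\R^2}\theta_{R,x_0}(x)\|u\|^2_{L^4(B^R_x)}\,dx\cdot\|\varphi_{R,x_0}^{1/2}\Dx u\|_{L^2},
\end{equation*}
while $I_{\rm inert}$ is handled by the two-dimensional Ladyzhenskaya interpolation $\|v\|^2_{L^4}\le C\|v\|_{L^2}\|\Nx v\|_{L^2}$, which gives $|I_{\rm inert}|\le C\|u\|_{L^4}\|\Nx u\|_{L^4}\|\varphi_{R,x_0}^{1/2}\Dx u\|_{L^2}$. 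After H\"older, Young and absorption of $\tfrac{1}{2}\|\Dx u\|^2_{L^2_{\varphi_{R,x_0}}}$ on the left, one is left with a quadratic remainder of the form $C(1+\|u\|^2_{L^2_b})\|\Nx u\|^2_{L^2_{\varphi_{R,x_0}}}$ plus the forcing. Multiplying by $\theta_{R,y_0}(x_0)$, integrating in $x_0$ and using \eqref{1.R-thetakey}, \eqref{1.eqtheta} produces a closed differential inequality for
\begin{equation*}
Y_{R,y_0}(t) := \int_{\R^2}\theta_{R,y_0}(x_0)\|\Nx u(t)\|^2_{L^2_{\varphi_{R,x_0}}}\,dx_0
\end{equation*}
whose nonlinear coefficients are controlled on $[0,T]$ by the local a~priori bound \eqref{4.locest}.

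Then I would multiply this inequality by $t$, use $t\frac{d}{dt}Y_{R,y_0} = \frac{d}{dt}(tY_{R,y_0}) - Y_{R,y_0}$, and integrate on $[0,t]\subset[0,T]$: the boundary term at $s=0$ vanishes and $\int_0^T Y_{R,y_0}(s)\,ds$ is already bounded by \eqref{4.locest}. A Gronwall-type argument then delivers $tY_{R,y_0}(t) \le Q(\|u_0\|_{L^2_b}+\|g\|_{L^2_b})$ uniformly in $y_0\in\R^2$, and the supremum in $y_0$ combined with \eqref{1.ul-wR} converts this into the desired bound on $\|\Nx u(t)\|_{L^2_b}$. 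The $L^2_b([t,T],W^{2,2}_b)$ estimate follows by integrating the $\|\Dx u\|^2_{L^2_{\varphi_{R,x_0}}}$ term from $t$ to $T$ and invoking the pressure estimate once more to recover full second-order regularity from $\Dx u$ and $\Nx p$.

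The main obstacle I expect lies in the cross-terms arising from the cut-off $\varphi_{R,x_0}$ when integrating by parts against the second-order test function $-\Dx u$: these generate contributions involving $\Nx\varphi_{R,x_0}\cdot\Nx u\cdot\Dx u$, which must be shown to be of order $R^{-1}$ and absorbed using the same Ladyzhenskaya interpolation and Young's inequality. Apart from this technical nuisance, the whole scheme is a second-order analogue of the weighted energy argument developed in Section~\ref{s3}, combined with the classical parabolic $t$-trick.
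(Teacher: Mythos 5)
Your overall scheme (weighted $H^1$ estimate, Lemma \ref{Lem0.2} applied to the divergence-free field $\Dx u$ for the pressure, averaging against $\theta_{R,y_0}$, and the parabolic $t$-trick) is the same as the paper's, but two details deviate in ways that matter. First, the multiplier: you test with $-\varphi_{R,x_0}\Dx u$, whereas the paper tests with $t\sum_i\partial_{x_i}(\varphi_{R,x_0}\partial_{x_i}u)$. The difference is not cosmetic. With your choice, producing $\frac{d}{dt}\|\Nx u\|^2_{L^2_{\varphi_{R,x_0}}}$ from $(\Dt u,-\varphi_{R,x_0}\Dx u)$ requires an integration by parts that generates the cross term $(\Dt u,\Nx\varphi_{R,x_0}\cdot\Nx u)$. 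This is not one of the "$O(R^{-1})$ cut-off commutators involving $\Nx\varphi\cdot\Nx u\cdot\Dx u$" you flag at the end: it contains $\Dt u$, which for a weak solution is only a distribution, and to control it you would have to substitute the equation back in (note also that $\Nx\varphi_{R,x_0}\cdot\Nx u$ is \emph{not} divergence free, so Lemma \ref{Lem0.2} does not apply to the resulting pressure pairing as stated; the paper handles the analogous term by applying the lemma with weight $\partial_{x_i}\varphi_{R,x_0}$ and test field $\partial_{x_i}u$). The paper's divergence-form multiplier makes the time-derivative term exact and avoids this issue entirely.

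Second, the inertial term. The paper does not estimate $((u,\Nx)u,\varphi_{R,x_0}\Dx u)$ by interpolation at all: it uses the 2D identity $(\Dx u,(u,\Nx)u)\equiv 0$ for divergence-free $u$, so that only weight-commutator remainders of size $C\|u\|_{L^2(B^{2R}_{x_0})}\|u\|^2_{W^{1,2}(B^{2R}_{x_0})}$ survive (this is what makes \eqref{4.h1} close against \eqref{4.locest} so cleanly). Your Ladyzhenskaya route can be made to work, but not with the remainder you state: after absorbing $\frac12\|\Dx u\|^2_{L^2_{\varphi_{R,x_0}}}$, Young's inequality leaves a term of the order $\|u\|^4_{L^4}\|\Nx u\|^2_{L^2}\lesssim \|u\|^2_{L^2_b}\|\Nx u\|^2_{L^2(B^{2R}_{x_0})}\cdot\|\Nx u\|^2_{L^2_{\varphi_{R,x_0}}}$, i.e.\ the Gronwall coefficient is $\|u\|^2_{L^2_b}\|\Nx u(t)\|^2_{L^2(B^{2R}_{x_0})}$, which is only $L^1$ in time (controlled by \eqref{4.locest}), not the bounded coefficient $C(1+\|u\|^2_{L^2_b})$ you claim. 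The conclusion still follows, but as written the quartic nature of the remainder is lost. If you repair these two points — either by switching to the paper's multiplier and invoking the 2D cancellation, or by carefully substituting the equation for $\Dt u$ in the cross term and carrying the $L^1_t$ Gronwall coefficient — your argument gives the proposition.
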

\begin{proof} To derive estimate \eqref{4.locsm}, we multiply equation \eqref{0.1} by $t\sum_{i}\partial_{x_i}(\varphi_{R,x_0}\partial_{x_i} u)$ and integrate over $x$. Then, after the standard transformations, we get
%$$
\begin{multline}\label{4.h1}
\frac d{dt}(t\|\Nx u\|^2_{L^2_{\varphi_{R,x_0}}})+2t\|\Dx u\|^2_{L^2_{\varphi_{R,x_0}}}\le C(t\|u\|_{L^2(B^{2R}_{x_0})}+t+1)\|u\|^2_{W^{1,2}(B^{2R}_{x_0})}+\\+Ct|(\Nx P(u\otimes u,),\sum_{i}\partial_{x_i}(\varphi_{R,x_0}\partial_{x_i} u))|,
\end{multline}
where $C$ may depend on $R$, but is independent of $x_0$. Here we have implicitly used that in 2D $(\Dx u,(u,\Nx)u)\equiv0$ for every divergence free function and, therefore, the leading part of the inertial term in \eqref{4.h1} disappears and the lower order remainder (which appears due to the presence of the weight) can be easily estimated by the first term in the right-hand side of \eqref{4.h1}. Thus, we only need to estimate the term containing pressure in the right-hand side of \eqref{4.h1}. To this end, we observe that $\Dx u$ is divergent free, so due to  Lemma \ref{Lem0.2},
%$$
\begin{multline}\label{4.press}
Ct|(\nabla P(w),\varphi_{R,x_0}\Dx u)|\le C_1 t\|\Dx u\|_{L^2_{\varphi_{R,x_0}}}\int_{\R^2}\theta_{R,x_0}(x)\|u\|^2_{L^4(B^R_{x})}\,dx\le \\\le
t\|\Dx u\|^2_{L^2_{\varphi_{R,x_0}}}+C_2\(\int_{\R^2}\theta_{R,x_0}(x)\|u\|_{L^2(B^{R}_x)}\|u\|_{W^{1,2}(B^R_x)}\,dx\)^2\le\\\le
t\|\Dx u\|^2_{L^2_{\varphi_{R,x_0}}}+C_3 Q(\|u_0\|_{L^2_b}+\|g\|_{L^2_b})\int_{\R^2}\theta_{R,x_0}(x)\|u\|^2_{W^{1,2}(B^R_{x})}\,dx,
\end{multline}
%$$
where the constants $C_i$ may depend on $R$, but are independent of $x_0\in\R^2$ (here we have also used \eqref{4.locest} in order to control the $L^2_b$-norm of $u$ and the fact that $t\le T$ and $T$ is small). Analogously, using that $\partial_{x_i} u$, $i=1,2$ are both divergence free and using Lemma \ref{Lem0.2} with $\varphi_{R,x_0}$ replaced by $\partial_{x_i}\varphi_{R,x_0}$, we have
%$$
\begin{multline}\label{4.press1}
Ct|(\nabla P(w),\Nx\varphi_{R,x_0}\cdot\Nx u)|\le C_1t\|\Nx u\|_{L^2(B^{2R}_{x_0})}\int_{\R^2}\theta_{R,x_0}(x)\|u\|^2_{L^4(B^R_{x})}\,dx\le \\\le
t\|\Nx u\|^2_{L^2(B^{2R}_{x_0})}+C_2\(\int_{\R^2}\theta_{R,x_0}(x)\|u\|_{L^2(B^{R}_x)}\|u\|_{W^{1,2}(B^R_x)}\,dx\)^2\le\\\le
C\|\Nx u\|^2_{L^2(B^{2R}_{x_0})}+C_3 Q(\|u_0\|_{L^2_b}+\|g\|_{L^2_b})\int_{\R^2}\theta_{R,x_0}(x)\|u\|^2_{W^{1,2}(B^R_{x})}\,dx.
\end{multline}
%$$
Inserting \eqref{4.press} and \eqref{4.press1} into the right-hand side of equation \eqref{4.h1} and integrating in time over $[0,t]$, we get
%$$
\begin{multline}\label{4.h1int}
t\|\Nx u(t)\|_{L^2_{\varphi_{R,x_0}}}^2+\int_0^t t\|\Dx u(s)\|^2_{L^2_{\varphi_{R,x_0}}}\,ds\le\\\le CQ(\|u_0\|_{L^2_b}+\|g\|_{L^2_b})\(\int_0^t\|u(s)\|^2_{W^{1,2}(B^{2R}_{x_0})}+\int_{\R^2}\theta_{R,x_0}(x)\int_0^t\|u(s)\|^2_{W^{1,2}(B^R_x)}\,ds\,dx\)\le\\\le
Q_1(\|u_0\|_{L^2_b}+\|g\|_{L^2_b}),
\end{multline}
%$$
where we have used \eqref{4.locest} again in order to estimate the time integral of the $W^{1,2}$-norm of $u$. Since the last estimate is uniform with respect to $x_0\in\R^2$, taking the supremum with respect to $x_0$, we end up with \eqref{4.locsm} and finish the proof of the proposition.
\end{proof}
We now make an extra assumption on the smoothness of the external forces $g$, namely, we assume that
%$$
\begin{equation}\label{4.glinf}
\rot g:=\partial_{x_1}g_2-\partial_{x_2}g_1\in L^\infty(\R^2)
\end{equation}
%$$
and remind that the vorticity $\omega:=\rot u$ satisfies the heat equation with the transport term:
%$$
\begin{equation}\label{4.vor}
\Dt \omega+(u,\Nx)\omega+\alpha\omega-\Dx\omega=\rot g,\ \ \omega\big|_{t=0}=\rot u_0.
\end{equation}
%$$
The crucial property is that the vorticity equation possesses the maximum/comparison principle which allows us to control the $L^\infty$-norm of the vorticity $\omega$.
\begin{proposition}\label{Prop4.max} Let the assumptions of Proposition \ref{Prop4.local} holds and let, in addition $\rot u_0\in L^\infty(\R^2)$ and \eqref{4.glinf} is satisfied. Then, the following estimate holds for $\omega:=\rot u$:
%$$
\begin{equation}\label{4.max}
\|\omega(t)\|_{L^\infty}\le \|\rot u_0\|_{L^\infty}\,e^{-\alpha t}+\frac1\alpha\|\rot g\|_{L^\infty}.
\end{equation}
%$$
\end{proposition}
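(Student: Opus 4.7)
The plan is to treat \eqref{4.vor} as a linear transport-diffusion equation for $\omega$ with divergence-free velocity $u$ and damping coefficient $\alpha$, and apply a classical maximum/comparison principle, after a smooth approximation to bypass the fact that the weak solution of Proposition \ref{Prop4.local} only lies in $W^{1,2}_b$ for $t>0$ (Proposition \ref{Prop4.smoo}), which in 2D does not embed in $L^\infty$. The key observation is that the spatially constant function
$$M(t):=\|\rot u_0\|_{L^\infty}\,e^{-\alpha t}+\alpha^{-1}\|\rot g\|_{L^\infty}$$
satisfies $M'(t)+\alpha M(t)\equiv\|\rot g\|_{L^\infty}$, so it is a global supersolution of \eqref{4.vor}. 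Setting $\zeta:=\omega-M$, the equation becomes the differential inequality
$$\Dt\zeta+(u,\Nx)\zeta-\Dx\zeta+\alpha\zeta=\rot g-\|\rot g\|_{L^\infty}\le 0,\qquad \zeta(0,\cdot)\le 0,$$
so the conclusion $\zeta\le 0$ yields the upper half of \eqref{4.max}; the lower half is obtained by repeating the argument with $-\omega$ in place of $\omega$.

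To justify $\zeta\le 0$ rigorously, pass to a smooth approximation. Using Corollary \ref{CorA.app}, approximate $u_0$ and $g$ by smooth divergence-free compactly supported data $u_0^n$, $g^n$ with $\|\rot u_0^n\|_{L^\infty}\le\|\rot u_0\|_{L^\infty}$ and $\|\rot g^n\|_{L^\infty}\le\|\rot g\|_{L^\infty}$. The standard 2D finite-energy theory produces smooth global classical solutions $u^n$ with $\omega^n\in C_b^\infty$. Multiplying the corresponding differential inequality for $\zeta^n:=\omega^n-M$ by $\zeta^n_+\,\phi_\varepsilon$ with the integrable exponential weight $\phi_\varepsilon(x):=e^{-\varepsilon|x|}$, integrating over $\R^2$, and using $\divv u^n=0$ together with the weight bounds $|\Nx\phi_\varepsilon|,|\Dx\phi_\varepsilon|\le C\varepsilon\,\phi_\varepsilon$, one obtains
$$\tfrac12\frac{d}{dt}\|\zeta^n_+\|_{L^2_{\phi_\varepsilon}}^2+\alpha\|\zeta^n_+\|_{L^2_{\phi_\varepsilon}}^2+\|\Nx\zeta^n_+\|_{L^2_{\phi_\varepsilon}}^2\le C\varepsilon\bigl(1+\|u^n\|_{L^\infty}\bigr)\|\zeta^n_+\|_{L^2_{\phi_\varepsilon}}^2.$$
Since $\zeta^n_+(0)\equiv 0$ by the choice of $M$, Gronwall's lemma forces $\zeta^n_+\equiv 0$ for every $t>0$, i.e.\ $\omega^n(t,x)\le M(t)$ pointwise.

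To finish, one passes to the limit $n\to\infty$. By the uniqueness Theorem \ref{Th3.unique} combined with the local existence bounds, $u^n\to u$ in $C([0,T],L^2_{\theta_{R,x_0}})$ for every $R$ and $x_0$, hence $\omega^n\rightharpoonup\omega$ in $\mathcal D'(\R^2)$. The uniform bound $\|\omega^n(t)\|_{L^\infty}\le M(t)$ together with the weak-$*$ lower semicontinuity of the $L^\infty$-norm then yields \eqref{4.max}. The main obstacle is precisely this approximation step: since $W^{1,2}_b\not\hookrightarrow L^\infty$ in 2D, one cannot apply the maximum principle directly to the bare weak solution of Proposition \ref{Prop4.local}, and the uniform $L^\infty$-bound must be transferred through an approximation that simultaneously preserves divergence-freeness and the $L^\infty$-norm of the vorticity (which is precisely what Corollary \ref{CorA.app} provides) before identifying the limit via the uniqueness theorem.
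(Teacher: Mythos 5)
Your proof is correct and follows exactly the route the paper takes: the paper proves this proposition in one line by invoking the comparison/maximum principle for the transport--diffusion equation \eqref{4.vor} (citing \cite{ST07}), and your argument with the constant supersolution $M(t)$, the weighted $L^2$-estimate for $(\omega-M)_+$ with the weight $e^{-\eps|x|}$, and the approximation--passage to the limit is precisely the standard implementation of that principle. The only small inaccuracy is the claim that Corollary \ref{CorA.app} preserves $\|\rot u_0^n\|_{L^\infty}\le\|\rot u_0\|_{L^\infty}$ exactly: since $u^N=\nabla^\perp(\Theta\varphi_{N,0})$, its curl acquires extra terms involving $\Nx\varphi_{N,0}$ and $\Dx\varphi_{N,0}$, so one only gets $\|\rot u_0^N\|_{L^\infty}\le\|\rot u_0\|_{L^\infty}+O(N^{-1})$, which of course still yields \eqref{4.max} in the limit.
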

Indeed, estimate \eqref{4.max} is an immediate corollary of the comparison principle, see e.g.,\cite{ST07} for more details.
\par
Finally, if the initial data $u_0\in L^2_b(\R^2)$ only as the next proposition shows, $\omega(t)\in L^\infty(\R^2)$ for  $t>0$ and Proposition \ref{Prop4.max} can be nevertheless used.

\begin{proposition}\label{Prop4.linfsm} Let the assumptions of Proposition \ref{Prop4.local} holds and let, in addition, \eqref{4.glinf} be satisfied.
Then, for any weak solution $u(t)$ of problem \eqref{0.1}, $\omega(t)\in L^\infty(\R^2)$ for all $t\in(0,T]$ and the following estimate holds:
%$$
\begin{equation}
\|\omega(t)\|_{L^\infty(\R^2)}\le t^{-N}Q(\|u_0\|_{L^2_b}+\|g\|_{L^2_b}+\|\rot g\|_{L^\infty}),
\end{equation}
%$$
for some positive $N$ and monotone function $Q$ which are independent of $t$, $u_0$ and $g$.
\end{proposition}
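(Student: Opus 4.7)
The strategy is to bootstrap regularity stepwise until the vorticity $\omega$ is placed in $L^\infty(\R^2)$ at some small positive time $t_0$, and then to propagate that bound forward via the maximum principle of Proposition \ref{Prop4.max}. Since \eqref{4.max} preserves $L^\infty$-boundedness, it suffices to prove $\|\omega(t_0)\|_{L^\infty}\le t_0^{-N_0}Q(\|u_0\|_{L^2_b}+\|g\|_{L^2_b}+\|\rot g\|_{L^\infty})$ for some $N_0>0$ at a single small $t_0\in(0,T)$; taking $t_0=t/2$ and applying \eqref{4.max} on $[t_0,t]$ then gives the desired bound on all of $(0,T]$ (with $N$ possibly shifted by an absolute constant).

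\textbf{Step 1 (higher Sobolev regularity for $u$).} By Proposition \ref{Prop4.smoo}, $u(t)\in W^{1,2}_b(\R^2)$ for $t>0$ with bound $Ct^{-1/2}Q(\|u_0\|_{L^2_b}+\|g\|_{L^2_b})$. I would iterate that argument one step further at the $W^{2,2}_b$ level, multiplying \eqref{0.1} by $t^2\sum_{i,j}\partial_{x_i}\partial_{x_j}\bigl(\varphi_{R,x_0}\partial_{x_i}\partial_{x_j}u\bigr)$, controlling the pressure contribution through Lemma \ref{Lem0.2} in direct analogy with \eqref{4.press}, and dispatching the inertial term $(u,\Nx)u$ via Hölder's inequality combined with the 2D Sobolev embedding $W^{1,2}_b\hookrightarrow L^p_b$ for every finite $p$, applied to the $W^{1,2}_b$-bound already in hand. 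The weight-derivative errors contribute only the harmless factor $R^{-1}$, exactly as in the proof of Proposition \ref{Prop4.smoo}. This produces $u(t)\in W^{2,2}_b(\R^2)$ for $t>0$ with quantitative bound $Ct^{-1}Q(\cdot)$, and by the 2D embedding $W^{2,2}(\R^2)\hookrightarrow L^\infty(\R^2)$ the drift in the vorticity equation \eqref{4.vor} is uniformly bounded on $[t_0,T]$.

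\textbf{Step 2 ($L^\infty$-estimate for $\omega$ via Moser iteration).} With $u\in L^\infty_b$ and $\rot g\in L^\infty$, equation \eqref{4.vor} becomes a linear heat equation with bounded divergence-free drift and bounded source, and its initial datum $\omega(t_0)=\rot u(t_0)\in W^{1,2}_b\hookrightarrow L^p_b$ lies in every $L^p_b$ with $p<\infty$. Multiplying \eqref{4.vor} by $|\omega|^{p-2}\omega\,\varphi_{R,x_0}$ and exploiting $\divv u=0$ rewrites the transport contribution as $-\tfrac1p\int|\omega|^p(u,\Nx)\varphi_{R,x_0}\,dx$, which is controlled using $|\Nx\varphi_{R,x_0}|\le CR^{-1}\varphi_{R,x_0}^{1/2}$ together with the $L^\infty_b$-bound on $u$. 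Combined with the 2D Gagliardo--Nirenberg inequality $\||\omega|^{p/2}\|_{L^4}^2\le C\|\Nx|\omega|^{p/2}\|_{L^2}\||\omega|^{p/2}\|_{L^2}$, this yields the standard Moser geometric sequence of exponents and carries $\omega$ from $L^2_b$ to $L^\infty$ in finitely many iterations, with polynomial loss $t^{-N}$ in time.

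\textbf{Main obstacle.} The principal technical difficulty lies in Step 1: pushing the weighted energy argument of Proposition \ref{Prop4.smoo} to the $W^{2,2}_b$-level one must simultaneously control the cubic nonlinearity produced by the inertial term against the diffusion, treat the pressure through Lemma \ref{Lem0.2} applied at one extra derivative, and manage the interaction of the weight derivatives $\Nx\varphi_{R,x_0}$ with the divergence-free structure. The absorption is made possible by the smallness factor $R^{-1}$ and by the $W^{1,2}_b$-bound already secured in Proposition \ref{Prop4.smoo}; once Step 1 is executed, the Moser iteration of Step 2 is a routine parabolic smoothing computation carried out in the uniformly local / weighted framework developed throughout the paper.
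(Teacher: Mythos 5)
Your proposal is correct in outline and its second step coincides with the paper's argument, but your Step 1 is a detour the paper does not take. The paper's proof is a two-line sketch: Proposition \ref{Prop4.smoo} already gives $u(t)\in W^{1,2}_b$ and hence $\omega(t)=\rot u(t)\in L^2_b$ for $t>0$, and one then runs the Moser iteration on the vorticity equation \eqref{4.vor} directly from this level of regularity. The point you are half a step away from seeing is that the divergence-free structure of the drift makes any $L^\infty$ bound on $u$ unnecessary: as you yourself compute, after multiplying by $|\omega|^{p-2}\omega\varphi_{R,x_0}$ the transport term reduces to $-\tfrac1p\int|\omega|^p(u,\Nx\varphi_{R,x_0})\,dx$, and with $|\Nx\varphi_{R,x_0}|\le CR^{-1}\varphi_{R,x_0}^{1/2}$ this is controlled by H\"older using only $u\in W^{1,2}_b\hookrightarrow L^q_b$ for finite $q$ together with the Gagliardo--Nirenberg control of $\||\omega|^{p/2}\|_{L^{q'}}$ that the diffusion already provides; the iteration then closes exactly as in the bounded-drift case. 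Consequently your entire Step 1 (the bootstrap to $W^{2,2}_b$ via the multiplier $t^2\sum_{i,j}\partial_{x_i}\partial_{x_j}(\varphi_{R,x_0}\partial_{x_i}\partial_{x_j}u)$) can be deleted. It is not wrong --- the second-level parabolic bootstrap can be carried out, with the cubic term $\int\varphi|\Nx u|\,|D^2u|^2$ absorbed by Ladyzhenskaya's inequality and the time-integrated $W^{2,2}_b$ bound of \eqref{4.locsm} --- but it is substantially more work than the problem requires, and what you identify as the ``principal technical difficulty'' is in fact an obstacle you have introduced rather than one inherent to the statement. The final assembly (establish the bound at $t_0=t/2$ and propagate with \eqref{4.max}) is fine.
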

Indeed, due to Proposition \ref{Prop4.smoo}, $\omega(t)\in L^2_b(\R^2)$ and $u(t)\in W^{1,2}_b(\R^2)$ for $t>0$. The further regularity of $\omega(t)$ can be now obtained by the classical smoothing estimates for the heat equation \eqref{4.vor} using, say, the Moser iterations.
\par
We are now ready to state and prove the main result of this section.

\begin{theorem}\label{Th4.main} Let $u_0,g\in L^2_b(\R^2)$ be divergent free and let, in addition, $\rot g\in L^\infty(\R^2)$. Then the unique weak solution $u(t)$ of the Navier-Stokes problem \eqref{0.1} exists globally in time and the following estimate holds:
%$$
\begin{equation}\label{4.globalest}
\|u(t)\|_{L^2_b}\le Q(\|u_0\|_{L^2_b})+Q(\|g\|_{L^2_b}+\|\rot g\|_{L^\infty}),
\end{equation}
%$$
where $Q$ is independent of time and, therefore, every weak solution is globally bounded in time.
\end{theorem}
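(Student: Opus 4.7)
The strategy is to combine the weighted-energy machinery used in Theorem \ref{Th0.1} with the uniform $L^\infty$ bound on the vorticity provided by the maximum principle (Proposition \ref{Prop4.max}), thereby dispensing with the spatial-decay assumption \eqref{3.gu}. First, Proposition \ref{Prop4.local} produces a unique weak solution on a short interval $[0,T_1]$, $T_1=T_1(\|u_0\|_{L^2_b},\|g\|_{L^2_b})$. Setting $t_0 := T_1/2$, Propositions \ref{Prop4.smoo} and \ref{Prop4.linfsm} upgrade $u(t_0)$ to $W^{1,2}_b(\R^2)$ and give a finite $L^\infty$-norm for $\omega(t_0)$, with both quantities controlled by $Q(\|u_0\|_{L^2_b}+\|g\|_{L^2_b}+\|\rot g\|_{L^\infty})$. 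Applying the comparison inequality \eqref{4.max} on $[t_0,\infty)$ then yields the uniform-in-time bound
\begin{equation*}
\|\omega(t)\|_{L^\infty(\R^2)} \le M_\omega := \|\omega(t_0)\|_{L^\infty} + \alpha^{-1}\|\rot g\|_{L^\infty},\qquad t\ge t_0,
\end{equation*}
where $M_\omega$ depends only on the data.

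With the vorticity under uniform control I would re-run the weighted-energy argument of Theorem \ref{Th0.1}. The derivation of the differential inequality \eqref{3.main} goes through as before, and the only place where the decay hypothesis \eqref{3.gu} is used in Section \ref{s3} is the cubic estimate \eqref{3.qubic}, where the Gagliardo--Nirenberg bound $\|u\|^3_{L^3(B^R_x)}\le C\|u\|^2_{L^2(B^R_x)}\|u\|_{W^{1,2}(B^R_x)}$ was handled by pulling $\|u\|_{L^2_{b,R}}$ out of the integral, producing a quadratic-in-$\sup_{x_0}Z_{R,x_0}$ residual that forced the smallness initialisation \eqref{0.30}. My replacement would be to control the Sobolev factor by the vorticity: the standard local div-curl estimate for divergence-free fields gives
\begin{equation*}
\|\Nx u\|_{L^2(B^R_x)} \le C\bigl(\|\omega\|_{L^2(B^{2R}_x)} + R^{-1}\|u\|_{L^2(B^{2R}_x)}\bigr) \le C\bigl(RM_\omega + R^{-1}\|u\|_{L^2(B^{2R}_x)}\bigr).
\end{equation*}
Inserting this into the cubic term, applying Young's inequality to absorb the leading piece into the gradient dissipation on the left-hand side of \eqref{3.main}, and using that $\int_{\R^2}\theta_{R,x_0}(x)\,dx\le CR^{-1}$ to convert the $M_\omega$-driven contribution into an additive constant, one arrives at a differential inequality of the schematic form
\begin{equation*}
\tfrac{d}{dt}Z_{R,x_0}(u(t)) + \beta Z_{R,x_0}(u(t)) \le C\bigl(\|g\|_{L^2_b},M_\omega\bigr) + \delta_R \sup_{x_0}Z_{R,x_0}(u(t))\cdot Z_{R,x_0}(u(t)),
\end{equation*}
with $\delta_R\to 0$ as $R\to\infty$, in place of the $M_\omega$-free but quadratically bad right-hand side of Section \ref{s3}.

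The rest of the argument is a bootstrap/continuity exercise patterned on the proof of Theorem \ref{Th0.1}: for $R=R(\|u(t_0)\|_{L^2_b},M_\omega)$ chosen large enough the smallness condition $\delta_R\sup_{x_0}Z_{R,x_0}(u(t_0))\le \beta/2$ is satisfied, and since $u\in C([t_0,T],W^{1,2}_b)$ by the smoothing property, the function $t\mapsto \sup_{x_0}Z_{R,x_0}(u(t))$ is continuous, so the smallness propagates in time. Gr\"onwall's inequality then delivers $\sup_{x_0}Z_{R,x_0}(u(t))\le C$ uniformly in $t\ge t_0$, whence $\|u(t)\|_{L^2_b}\le Q$ of the required form; the short interval $[0,t_0]$ is controlled by \eqref{4.locest}. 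The core technical difficulty lies in the cubic-term analysis in the middle step: one has to arrange that the potentially large vorticity bound $M_\omega$ enters the right-hand side of \eqref{3.main} only as an \emph{additive} constant $C(M_\omega)$ and not as a multiplicative coefficient of $Z_{R,x_0}(u)$, because otherwise the damping coefficient $\alpha$ would not suffice to close the estimate for large $M_\omega$; achieving this absorption via Young's inequality and the special structure of the cut-off weight $\varphi_{R,x_0}$ is where the interplay between $\alpha u$, the parabolic dissipation $\Delta u$ and the vorticity estimate has to be exploited most carefully.
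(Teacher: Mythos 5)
Your overall architecture (local existence, smoothing so that $\rot u(t_0)\in L^\infty$, the maximum principle \eqref{4.max}, then a re-run of the weighted energy inequality \eqref{3.main} with the vorticity bound replacing the decay hypothesis, closed by a continuity/bootstrap in $R$) is exactly the paper's. The gap is in the one step you yourself single out as the core difficulty: your treatment of the cubic term does not close. If you keep the interpolation $\|u\|^3_{L^3(B^R_x)}\le C\|u\|^2_{L^2(B^R_x)}\|u\|_{W^{1,2}(B^R_x)}$ and control the Sobolev factor by the local div--curl estimate, the vorticity enters through its $L^2$ norm on a ball of radius $\sim R$, which costs $\|\omega\|_{L^2(B^{2R}_x)}\le CR\|\omega\|_{L^\infty}$. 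That factor $R$ exactly cancels the prefactor $R^{-1}$ in front of the cubic term in \eqref{3.main}, leaving
\begin{equation*}
CR^{-1}\int_{\R^2}\theta_{R,x_0}(x)\|u\|^2_{L^2(B^R_x)}\cdot R\,\|\omega\|_{L^\infty}\,dx\;=\;C\|\omega\|_{L^\infty}\,Z_{R,x_0}(u),
\end{equation*}
a term \emph{linear} in $Z_{R,x_0}(u)$ with the large multiplicative coefficient $CM_\omega$. Young's inequality cannot convert a term linear in $Z$ into an additive constant, and the bound $\int\theta_{R,x_0}\,dx\le CR^{-1}$ does not help because the integrand still carries $\|u\|^2_{L^2(B^R_x)}$; since $M_\omega$ is in general much larger than $\beta$ (and $\alpha$), this contribution cannot be absorbed by $\beta Z_{R,x_0}(u)$, and it is $R$-independent, so enlarging $R$ does nothing. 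The companion piece $CR^{-1}\int\theta_{R,x_0}\|u\|^3_{L^2(B^{2R}_x)}\,dx\le C\|u\|_{L^2_b}\,Z_{R,x_0}(u)$ has the same defect.

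The paper's resolution is a genuinely sharper interpolation (Lemma \ref{Lem4.key}, proved by Littlewood--Paley analysis in Appendix 2): for divergence-free fields, $\|u\|_{L^3}\le C\|u\|_{L^2}^{5/6}\left(\|\rot u\|_{L^\infty}+\|\divv u\|_{L^\infty}\right)^{1/6}$, so the vorticity enters through its $L^\infty$ norm (no factor $R$) and, after cubing, only to the power $1/2$. Combined with \eqref{4.keyest1} to handle the commutator with the cut-off, this yields $\|u\|^3_{L^3(B^R_{x_0})}\le CR^{1/2}\left(R^{-1}\|u\|_{L^2_{b,R}}+\|\omega\|_{L^\infty}\right)\|u\|^2_{W^{1,2}(B^{4R}_{x_0})}$; after the prefactor $R^{-1}$ the entire cubic term carries the coefficient $KR^{-1/2}\left(R^{-1}\|u\|_{L^2_{b,R}}+\|\omega\|_{L^\infty}\right)$ in front of the dissipation integral $\int\theta_{R,x_0}\|u\|^2_{W^{1,2}(B^R_x)}\,dx$, which is made $\le 2\beta$ by taking $R^{1/2}\gtrsim \|u_0\|_{L^2_b}+M_\omega$ and is then absorbed entirely into the left-hand side of \eqref{4.main} --- there is no additive $C(M_\omega)$ and no multiplicative $M_\omega Z$ at all. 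To repair your argument you must import this $L^2$--$\dot B^1_{\infty,\infty}$ interpolation (or an equivalent device trading the factor $R$ from $\|\omega\|_{L^2(B^R)}$ for a fractional power of $\|\omega\|_{L^\infty}$); the plain div--curl estimate in $L^2$ is not enough.
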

\begin{proof} We first note that, due to Proposition \ref{Prop4.linfsm}, we may assume without loss of generality that $\rot u_0\in L^\infty(\R^2)$, so we may use the vorticity estimate \eqref{4.max} starting from $t=0$. The idea of the proof is to estimate the $L^3$-norm in the right-hand side of \eqref{3.main} (see the proof of Theorem \ref{Th0.1}) in a better way using the vorticity estimate \eqref{4.max} and the proper interpolation. To this end, we need the following lemma.

\begin{lemma}\label{Lem4.key}
  Let the vector field $u\in [W^{1,2}_0(B^{2R}_{x_0})]^2$ be such that $\divv u,\rot u\in L^\infty(B^{2R}_{x_0})$. Then,
%$$
\begin{equation}\label{4.keyest}
\|u\|_{L^3(B^{2R}_{x_0})}\le C\|u\|_{L^2(B^{2R}_{x_0})}^{5/6}\(\|\rot u\|_{L^\infty(B^{2R}_{x_0})}+\|\divv u\|_{L^\infty(B^{2R}_{x_0})}\)^{1/6}, 
\end{equation}
%$$
where the constant $C$ is independent of $R$ and $x_0$. Moreover, for any $2<p<\infty$,
%$$
\begin{equation}\label{4.keyest1}
\|u\|_{L^\infty(B^{2R}_{x_0})}\le C\|u\|_{L^2(B^{2R}_{x_0})}^{\theta}\(\|\rot u\|_{L^p(B^{2R}_{x_0})}+\|\divv u\|_{L^p(B^{2R}_{x_0})}\)^{1-\theta},\ \ \ \theta=\frac12-\frac1{2(p-1)},
\end{equation}
%$$
where $C$ may depend on $p$, but is independent of $R$ and $x_0\in\R^2$. 
\end{lemma}
For the proof of the lemma see Appendix 2.
\par 
We are now returning to the proof of the theorem. Using \eqref{4.keyest} and the cut-off functions \eqref{0.11} and \eqref{0.12}, we
estimate the $L^3$-norm of the solution as follows:
%$$
\begin{multline}\label{4.estcorrect}
\|u\|_{L^3(B^R_{x_0})}^3\le \|u\varphi_{R,x_0}\|_{L^2(B^{2R}_{x_0})}^3\le\\\le C\|u\|_{L^2(B^{2R}_{x_0})}^{5/2}\(\|\rot(\varphi_{R,x_0}u)\|_{L^\infty}+\|\divv(\varphi_{R,x_0} u)\|_{L^\infty}\)^{1/2}\le\\\le C\|u\|_{L^2(B^{2R}_{x_0})}^{5/2}
\( CR^{-1}\|u\|_{L^\infty(B^{2R}_{x_0})}+C\|\omega\|_{L^\infty(B^{2R}_{x_0})}\)^{1/2}\le\\\le
C\|u\|_{L^2(B^{4R}_{x_0})}^{5/2}\|\omega\|^{1/2}_{L^\infty}+CR^{-1/2}\|u\|_{L^2(B^{4R}_{x_0})}^{5/2}\|u\|_{L^\infty(B^{2R}_{x_0})}^{1/2},
%CR^{-3(1-\theta)}\|u\|_{L^2_{b,R}}\|u\|_{L^2(B^{2R}_{x_0})}^{3\theta-1}\|u\|_{L^p(B^{2R}_{x_0})}^{3(1-\theta)}+
%CR^{3(1-\theta)/p}\|\omega\|_{L^\infty}^{3(1-\theta)}\|u\|_{L^2(B^{2R}_{x_0})}^{3\theta}\le\\\le
%C\(R^{-3(1-\theta)}\|u\|_{L^2_{b,R}}+R^{3(1-\theta)/p}\|\omega\|_{L^\infty}^{3(1-\theta)}\|u\|_{L^2_{b,R}}^{3\theta-2}\)\|u\|^2_{W^{1,2}(B^{2R}_{x_0})},
\end{multline}
%$$
where the constant $C$ is independent of $R$ and $x_0$. Analogously, using estimate \eqref{4.keyest1}, say, with $p=4$, we have
%$$
\begin{multline*}
\|u\|_{L^\infty(B^{2R}_{x_0})}\le \|\varphi_{2R,x_0}u\|_{L^\infty}\le C\|u\|_{L^2(B^{4R}_{x_0})}^{1/3}\(\|w\|_{L^4(B^{4R}_{x_0})}+R^{-1}\|u\|_{L^4(B^{4R}_{x_0})}\)^{2/3}\le\\\le
C\|u\|_{W^{1,2}(B^{4R}_{x_0})}^{1/3}\(R^{1/2}\|\omega\|_{L^\infty}+R^{-1}\|u\|_{W^{1,2}(B^{4R}_{x_0})}\)^{2/3}
\le\\\le C R^{1/2}\|\omega\|_{L^\infty}+C\|u\|_{W^{1,2}(B^{4R}_{x_0})},
\end{multline*}
%$$
where the constant $C$ is independent of $R$ and $x_0$ and we have implicitly used the embedding theorem $W^{1,2}\subset L^4$.
Inserting this estimate into the right-hand side of \eqref{4.estcorrect}, we arrive at
%$$
\begin{multline}
\|u\|_{L^3(B^R_{x_0})}^3\le C\|u\|_{L^2(B^{4R}_{x_0})}^{5/2}\|\omega\|^{1/2}_{L^\infty}+CR^{-1/2}\|u\|_{L^2(B^{4R}_{x_0})}^{5/2}\|u\|_{W^{1,2}(B^{4R}_{x_0})}^{1/2}\le\\\le C\|u\|_{W^{1,2}(B^{4R}_{x_0})}^2\(\|\omega\|_{L^\infty}^{1/2}\|u\|^{1/2}_{L^2(B^{4R}_{x_0})}+R^{-1/2}\|u\|_{L^2(B^{4R}_{x_0})}\)\le\\\le C R^{1/2}\(R^{-1}\|u\|_{L^2_{b,R}}+\|\omega\|_{L^\infty}\)\|u\|_{W^{1,2}(B^{4R}_{x_0})}^2,
\end{multline}
%$$
where the constant $C$ is independent of $R$ and $x_0$. Therefore, using \eqref{1.eqtheta}, we have
%$$
\begin{equation}
\int_{\R^3}\theta_{R,x_0}(x)\|u\|^3_{L^3(B^R_{x})}\,dx\le CR^{1/2}\(R^{-1}\|u\|_{L^2_{b,R}}+\|\omega\|_{L^\infty}\)\int_{\R^2}\theta_{R,x_0}(x)\|u\|^2_{W^{1,2}(B^R_{x})}\,dx.
\end{equation}
%$$
Inserting this estimate into the right-hand side of \eqref{3.main}, we get
%$$
\begin{multline}\label{4.main}
\frac d{dt}Z_{R,x_0}(u(t))+\beta Z_{R,x_0}(u(t))+\\+\(2\beta-KR^{-1/2}(R^{-1}\|u\|_{L^2_{b,R}}+\|\omega\|_{L^\infty})\) \int_{x\in\R^2}\theta_{R,x_0}(x)\|u\|_{W^{1,2}(B^R_x)}^2\,dx\le C Z_{R,x_0}(g),
\end{multline}
%$$
where the positive constants $C$, $K$ and $\beta$ are independent of $R$ and $x_0$.
\par
Analogously to the proof of Theorem \ref{Th0.1}, if the parameter $R$ is chosen in such way that
%$$
\begin{equation}\label{4.loop}
KR^{-1/2}(R^{-1}\|u\|_{L^2_{b,R}}+\|\omega\|_{L^\infty})\le 2\beta,
\end{equation}
%$$
the Gronwall estimate applied to \eqref{4.main} gives
%$$\
\begin{equation}\label{4.Zbound}
Z_{R,x_0}(t)\le Z_{R,x_0}(u_0)e^{-\beta t}+CZ_{R,x_0}(g)\le CR(\|u_0\|_{L^2_b}^2+\|g\|_{L^2_b}^2)
\end{equation}
%$$
and, therefore, taking into the account \eqref{4.max} and \eqref{1.ul-wR}, we have
%$$
\begin{equation}\label{4.bound}
R^{-1}\|u(t)\|_{L^2_{b,R}}+\|w(t)\|_{L^\infty}\le C_1(\|u_0\|_{L^2_b}+\|\rot u_0\|_{L^\infty}+\|g\|_{L^2_b}+\|\rot g\|_{L^\infty}).
\end{equation}
%$$
Finally, arguing as in the end of the proof of Theorem \ref{Th0.1}, we show that both \eqref{4.loop} and \eqref{4.Zbound} are satisfied if the parameter $R$ is chosen in a such way that
%$$
\begin{equation}\label{R}
R^{-1/2}=\frac\beta{KC_1}(\|u_0\|_{L^2_b}+\|\rot u_0\|_{L^\infty}+\|g\|_{L^2_b}+\|\rot g\|_{L^\infty})^{-1}.
\end{equation}
%$$
This, together with \eqref{4.bound} gives the estimate
%$$
\begin{equation}\label{4.best}
\|u(t)\|_{L^2_b}\le \|u(t)\|_{L^2_{b,R}}\le C(\|u_0\|_{L^2_b}+\|\rot u_0\|_{L^\infty}+\|g\|_{L^2_b}+\|\rot g\|_{L^\infty})^3
\end{equation}
%$$
and the theorem is proved.
\end{proof}

\section{Dissipativity and attractors}\label{s5}
In the previous section, we have shown that the global weak solution of the damped Navier-Stokes system \eqref{0.1} exists and remains bounded as $t\to\infty$. The aim of the present section is to derive the dissipative analogue of \eqref{4.globalest}. To be more precise, the following theorem can be considered as the main result of the section.

\begin{theorem} \label{Th5.dis} Let the assumptions of Theorem \ref{Th4.main} hold. Then, the unique weak solution $u(t)$ of the damped Navier-Stokes equations \eqref{0.1} possesses the following estimate:
%$$
\begin{equation}\label{5.dis}
\|u(t)\|_{L^2_b}\le Q(\|u_0\|_{L^2_b})e^{-\beta t}+Q(\|g\|_{L^2_b}+\|\rot g\|_{L^\infty}),
\end{equation}
%$$
where the positive constant $\beta$ and the monotone function $Q$ are independent of $t$ and $u_0$.
\end{theorem}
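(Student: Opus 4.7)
My plan is to run the Gronwall argument underpinning Theorem~\ref{Th4.main} in its sharp, exponentially-decaying form, and then to iterate once the vorticity has stabilized in order to strip the $u_0$-dependence out of the forcing part.

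First I would apply Theorem~\ref{Th4.main} to obtain the time-uniform bound $\|u(t)\|_{L^2_b}\le M(u_0,g)$, and invoke the smoothing Propositions~\ref{Prop4.smoo} and~\ref{Prop4.linfsm} on $[0,1]$ in order to reduce, without loss of generality, to $\rot u_0\in L^\infty(\R^2)$. The vorticity maximum principle~\eqref{4.max} then produces a time $T_1=T_1(u_0,g)$ after which $\|\omega(t)\|_{L^\infty}\le 2\|\rot g\|_{L^\infty}/\alpha$, so that on $[T_1,\infty)$ the radius $R=R_M$ selected by~\eqref{R} makes the dissipation condition~\eqref{4.loop} valid uniformly in time. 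The differential inequality~\eqref{4.main} therefore collapses to $\partial_t Z_{R,x_0}(u)+\beta Z_{R,x_0}(u)\le C\,Z_{R,x_0}(g)$; applying Gronwall, taking the supremum over $x_0\in\R^2$, and converting from $Z$ to the $L^2_b$ norm via~\eqref{0.281} and~\eqref{ulR}, I arrive at
\begin{equation*}
\|u(t)\|_{L^2_b}\le CR_M\,\|u(T_1)\|_{L^2_b}\,e^{-\beta(t-T_1)/2}+CR_M\,\|g\|_{L^2_b},\qquad t\ge T_1,
\end{equation*}
in which the first summand already exhibits the desired exponential decay.

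Second, the coefficient $CR_M$ in front of $\|g\|_{L^2_b}$ still depends on $u_0$ through $R_M\sim M^2$, and I would remove this dependence by bootstrapping. After waiting a further time $T_2$ at which the decaying term drops below a forcing-only threshold, $\|u(T_1+T_2)\|_{L^2_b}$ is bounded by $CR_M\|g\|_{L^2_b}+1$ and, crucially, $\|\rot u(T_1+T_2)\|_{L^\infty}$ is controlled by $\|\rot g\|_{L^\infty}$ alone. Restarting the scheme with $u(T_1+T_2)$ as new initial datum yields a new constant $R_{M^{(1)}}$ with a smaller $u_0$-imprint; iterating this reduction defines a sequence $M^{(k)}$ satisfying $M^{(k+1)}\le C\|g\|_{L^2_b}(M^{(k)})^2+Q(\|g\|_{L^2_b}+\|\rot g\|_{L^\infty})$. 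Once the trajectory enters the absorbing ball of radius $Q(\|g\|_{L^2_b}+\|\rot g\|_{L^\infty})$, a final application of Theorem~\ref{Th4.main} with the in-ball initial datum yields the uniform forcing-only bound, and the residual contribution on $[0,T_*]$ is absorbed into $Q(\|u_0\|_{L^2_b})e^{-\beta t}$ by enlarging the monotone function $Q$.

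The main obstacle I expect is ensuring that the bootstrap iteration actually contracts. The recursion $M^{(k+1)}\le C\|g\|_{L^2_b}(M^{(k)})^2+\cdots$ is quadratically contractive only when $\|g\|_{L^2_b}M^{(k)}$ is small, so for large initial data one cannot rely on quadratic contraction alone. The remedy is to exploit the exponential factor $e^{-\beta(T_{k+1}-T_k)/2}$ at each step: choose the waiting time $T_{k+1}-T_k$ large enough that the overall reduction factor is strictly less than one, at the cost of a total waiting time $T_*$ that depends (possibly very badly) on $\|u_0\|_{L^2_b}$. This is admissible because the $u_0$-dependence of $T_*$ only enters the prefactor $Q(\|u_0\|_{L^2_b})$ multiplying $e^{-\beta t}$ and does not contaminate the forcing-only second term in~\eqref{5.dis}.
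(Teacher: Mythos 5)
Your first stage is sound and reproduces what the paper records as \eqref{5.wrong}: after the vorticity has relaxed, a fixed radius $R_M$ chosen as in \eqref{R} keeps \eqref{4.loop} valid, and Gronwall applied to \eqref{4.main}, converted via \eqref{0.281} and \eqref{ulR}, gives $\|u(t)\|_{L^2_b}\le CR_M\|u(T_1)\|_{L^2_b}e^{-\beta(t-T_1)/2}+CR_M\|g\|_{L^2_b}$. The gap is in the bootstrap. The obstruction you are trying to iterate away is structural: the admissibility condition \eqref{4.loop} forces $R_k^{1/2}\gtrsim \|u(T_k)\|_{L^2_b}+\|\omega\|_{L^\infty}$, i.e. $R_k\gtrsim (M^{(k)})^2$, and the \emph{persistent} part of the stage-$k$ output is $CR_k\|g\|_{L^2_b}\sim C(M^{(k)})^2\|g\|_{L^2_b}$. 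This term does not decay in $t$, so no choice of waiting time $T_{k+1}-T_k$ reduces it; the exponential factor you invoke multiplies only the transient summand. Consequently the recursion is genuinely $M^{(k+1)}\ge c\|g\|_{L^2_b}(M^{(k)})^2$, which is \emph{expansive} whenever $c\|g\|_{L^2_b}M^{(k)}>1$ --- precisely the regime of large data where the theorem has content. Your proposed remedy (wait longer so that ``the overall reduction factor is strictly less than one'') conflates the decay of the transient with a reduction of the forcing-driven term and therefore does not close the argument; for large $\|u_0\|_{L^2_b}$ or $\|g\|_{L^2_b}$ the iteration diverges rather than entering an absorbing ball.

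The paper resolves this differently: instead of restarting with a new fixed radius, it lets the radius depend on time, $R=R(t)$ as in \eqref{5.RT}, decaying from a data-dependent value to a forcing-only value. This makes the coefficient $CR(t)$ in front of $\|g\|_{L^2_b}$ itself decay to a forcing-only constant, which is exactly what your scheme cannot achieve. The price is that the cut-off $\varphi_{R(t),x_0}$ and the weight $\theta_{R(t),x_0}$ now produce extra terms proportional to $|R'(t)|/R(t)$ when differentiated in time (estimates \eqref{5.phiT} and \eqref{5.thetaT}), and these must be absorbed by the dissipation, which is the content of the constraint \eqref{5.Rslow} and the choice of the rate $\gamma$ in \eqref{5.RT}. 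If you want to repair your proof, this time-dependent-radius device (or some other mechanism that lets the admissible $R$ shrink together with the solution) is the missing idea; the two-step ``decay then restart'' strategy cannot substitute for it.
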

\begin{proof} As in the proof of Theorem \ref{Th4.main}, we may assume that $\rot u_0\in L^\infty(\R^2)$ and use estimate \eqref{4.max}. Note also that \eqref{4.Zbound} gives us the estimate of the form
%$$
\begin{equation}\label{5.wrong}
R^{-2}\|u\|_{L^2_{b,R}}^2\le C(\|u_0\|_{L^2_b}^2e^{-\beta t}+\|g\|^2_{L^2_b})
\end{equation}
%$$
which looks as dissipative. However, it is not sufficient to derive the desired estimate \eqref{5.dis} since the parameter $R$ depends on the initial data, see \eqref{R}. The idea of the proof of this theorem is to allow (following to \cite{PZ12}, see also \cite{Z07}) the parameter $R$ to depend on $t$ ($R=R(t)$) and to derive the analogue of \eqref{5.wrong} for the time-dependent $R$ satisfying proper dissipative estimate. To this end, we first need to know how the cut-off ($\varphi_{R,x_0}$) and weight ($\theta_{R,x_0}$) functions depend on the parameter~$R$. We start with the cut-off functions. To satisfy \thetag{0.11} and \eqref{0.12} it is sufficient to take
$$
\varphi_{R,x_0}(x):=\varphi\(\frac xR-x_0\),
$$
where $\varphi$ is a single cut-off function which equals one at $B^1_0$ and zero outside of $B^2_0$ and satisfies \eqref{0.12} with $R=1$. Assuming that $R=R(t)$ is a smooth function, the differentiation gives
%$$
\begin{equation}\label{5.phiT}
|\frac \partial{\partial t}\varphi_{R(t),x_0}(x)|\le C\cdot\frac {|R'(t)|}{R(t)}\cdot[\varphi_{R(t),x_0}(x)]^{1/2},
\end{equation}
%$$
where the constant $C$ is independent of $R$ and $x_0$. Analogously, the straightforward calculations show that
%$$
\begin{equation}\label{5.thetaT}
|\frac\partial{\partial t}\theta_{R(t),x_0}(x)|\le C\cdot \frac{|R'(t)|}{R(t)}\cdot\theta_{R(t),x_0}(x),
\end{equation}
%$$
where the constant $C$ is also independent of $R$ and $x_0$.
\par
Using these estimates and arguing exactly as in the derivation of \eqref{4.main}, we end up with the following inequality:
%$$
\begin{multline}\label{5.maindis}
\frac d{dt}Z_{R(t),x_0}(u(t))+\beta Z_{R(t),x_0}(u(t))+\(\beta-K_1\cdot\frac{|R'(t)|}{R(t)}\)Z_{R(t),x_0}(u)+\\+\(2\beta-KR(t)^{-1/2}(R(t)^{-1}\|u\|_{L^2_{b,R(t)}}+\|\omega\|_{L^\infty})\) \int_{x\in\R^2}\theta_{R(t),x_0}(x)\|u\|_{W^{1,2}(B^{R(t)}_x)}^2\,dx\le\\\le C Z_{R(t),x_0}(g),
\end{multline}
%$$
where the positive constants $C$, $K$, $K_1$ and $\beta$ are independent of $R$ and $x_0$.
\par
Indeed, the estimates of terms which do not involve the time differentiation are identical and only the terms containing time derivatives may cause the difference. At the first step, we multiply \eqref{0.1} by $u\varphi_{R(t),x_0}$ and the term with time derivative now reads
$$
(\Dt u,u\varphi_{R(t),x_0})=\frac12\|u\|_{L^2_{\varphi_{R(t),x_0}}}-\frac12(|u|^2,\Dt\varphi_{R(t),x_0})
$$
and using \eqref{5.phiT}, we estimate the extra term via
$$
|(|u|^2,\Dt\varphi_{R(t),x_0})|\le C\frac{|R'(t)|}{R(t)}\|u\|_{L^2(B^{2R(t)}_{x_0})}^2.
$$
One more extra term we obtain when we multiply equation \eqref{2.13} by $\theta_{R(t),x_0}$ and integrate over $x_0$. Namely,
$$
\int_{x_0\in\R^2}\theta_{R(t),y_0}(x_0)\frac d{dt}\|u\|^2_{L^2_{\varphi_{R(t),x_0}}}\,dx_0=\frac d{dt}Z_{R(t),y_0}(u)-
\int_{\R^2}\Dt\theta_{R(t),y_0}(x_0)\|u\|^2_{L^2_{\varphi_{R(t),x_0}}}\,dx_0
$$
and the extra term here can be estimated using \eqref{5.thetaT} via
$$
|\int_{\R^2}\Dt\theta_{R(t),y_0}(x_0)\|u\|^2_{L^2_{\varphi_{R(t),x_0}}}\,dx_0|\le C\frac{|R'(t)|}{R(t)}Z_{R(t),y_0}(u).
$$
The above two estimate leads in a straightforward way to the extra term in the left-hand side of \eqref{5.maindis}. Thus, under the extra assumption
%$$
\begin{equation}\label{5.Rslow}
\frac{|R'(t)|}{R(t)}\le \frac\beta{K_1}
\end{equation}
%$$
(which we assume from now on to be true), \eqref{5.maindis} will be identical to \eqref{4.main} and we derive from it that
%$$
\begin{equation}\label{5.correct}
R(t)^{-1}\|u\|_{L^2_{b,R(t)}}\le C(\|u_0\|_{L^2_b}e^{-\beta t/2}+\|g\|_{L^2_b})
\end{equation}
%$$
if $R(t)$ is such that
%$$
\begin{equation}\label{5.Rchoice}
2\beta-KR(t)^{-1/2}(R(t)^{-1}\|u(t)\|_{L^2_{b,R(t)}}+\|\omega(t)\|_{L^\infty})\ge0
\end{equation}
%$$
holds for all $t$. Note that, if \eqref{5.correct} holds, due to \eqref{4.max}, we have
%$$
\begin{equation}
R(t)^{-1}\|u(t)\|_{L^2_{b,R(t)}}+\|\omega(t)\|_{L^\infty}\le C(\|u_0\|_{L^2_b}+\|\rot u_0\|_{L^\infty})e^{-\beta t/2}+\|g\|_{L^2_b}+\|\rot g\|_{L^\infty}).
\end{equation}
%$$
Thus, using the continuity arguments (exactly as in the proof of Theorem \ref{Th0.1}), we may conclude that both \eqref{5.correct} and \eqref{5.Rchoice} hold if we take $R(t)$ as follows:
%$$
\begin{equation}\label{5.RT}
R(t)=\frac{\beta^2C^2}{K^2}\((\|u_0\|_{L^2_b}+\|\rot u_0\|_{L^\infty})e^{-\gamma t}+\|g\|_{L^2_b}+\|\rot g\|_{L^\infty}\)^2,
\end{equation}
%$$
where the parameter $0<\gamma<\beta/2$ should be chosen small enough to satisfy \eqref{5.Rslow}. Indeed,
$$
\frac{|R'(t)|}{R(t)}=2\gamma \frac{e^{-\gamma t}(\|u_0\|_{L^2_b}+\|\rot u_0\|_{L^\infty})}{(\|u_0\|_{L^2_b}+\|\rot u_0\|_{L^\infty})e^{-\gamma t}+\|g\|_{L^2_b}+\|\rot g\|_{L^\infty}}\le 2\gamma
$$
and \eqref{5.Rslow} is satisfied for $\gamma=\beta\min\{\frac12,\frac1{K_1}\}$.
\par
Thus, estimate \eqref{5.correct} is verified for $R(t)$ satisfying \eqref{5.RT} and, therefore,
%$$
\begin{multline}\label{5.fingal}
\|u(t)\|_{L^2_b}\le \|u(t)\|_{L^2_{b,R}}\le\\\le C\((\|u_0\|_{L^2_b}+\|\rot u_0\|_{L^\infty})e^{-\gamma t}+\|g\|_{L^2_b}+\|\rot g\|_{L^\infty}\)^2
\(\|u_0\|_{L^2_b}e^{-\gamma t}+\|g\|_{L^2_b}\)
\end{multline}
%$$
which gives the desired estimate \eqref{5.dis} and finishes the proof of the theorem.
\end{proof}
Thus, we have verified that the solution semigroup
%$$
\begin{equation}\label{5.sem}
S(t)u_0:=u(t),\ \ S(t): H_b\to H_b,\ \ H_b:=\{u_0\in [L^2_b(\R^2)]^2,\ \divv u_0=0\},
\end{equation}
%$$
where $u(t)$ is a unique global weak solution of \eqref{0.1},
is well defined in the phase space $H_b$ and, due to estimate \eqref{5.dis}, it is dissipative in this space. Therefore, we may speak about the associated global attractor. We start with the reminding of the definition of the so-called locally compact attractor which is natural for dissipative systems in unbounded domains, see \cite{MZ08} and references therein for the details.

\begin{definition}\label{Def5.attr} A set $\mathcal A\subset H_b$ is a (locally compact) global attractor for the solution semigroup $S(t)$ iff
\par
1) The set $\mathcal A$ is bounded in $H_b$ and is compact in $H_{loc}:=\{u_0\in[L^2_{loc}(\R^2)]^2,\ \divv u_0=0\}$;
\par
2) It is strictly invariant: $S(t)\mathcal A=\mathcal A$ for all $t>0$;
\par
3) It attracts the images of bounded (in $H_b$) sets in the topology of $H_{loc}$. Namely, for any bounded subset $B\subset H_b$ and any neighborhood
$\mathcal O(\mathcal A)$ of the attractor $\mathcal A$ in the $H_{loc}$-topology, there exists $T=T(B,\mathcal O)$ such that
$$
S(t)B\subset\mathcal O(\mathcal A)
$$
for all $t\ge T$.
\end{definition}
The following corollary gives the existence of such an attractor.
\begin{corollary}\label{Cor5.attr} Let the assumptions of Theorem \ref{Th5.dis} hold. Then the associated solution semigroup possesses a global attractor $\mathcal A$ (in the sense of Definition \ref{Def5.attr}) which is generated by all bounded solutions of \eqref{0.1} defined for all $t\in\R$:
%$$
\begin{equation}\label{5.atrattr}
\mathcal A=\mathcal K\big|_{t=0},
\end{equation}
%$$
where $\mathcal K\subset L^\infty(\R,H_b)$ is a set of all solutions of \eqref{0.1} defined for all $t\in\R$ and bounded.
\end{corollary}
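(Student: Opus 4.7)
The plan is to apply the standard abstract scheme for locally compact attractors in unbounded domains (see \cite{MZ08}), verifying three standard ingredients for the solution semigroup $S(t): H_b\to H_b$ defined in \eqref{5.sem}.

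First, dissipativity in $H_b$ is already available: by Theorem \ref{Th5.dis}, the set
$$
\mathcal B:=\{u_0\in H_b\,:\,\|u_0\|_{L^2_b}\le 2Q(\|g\|_{L^2_b}+\|\rot g\|_{L^\infty})\}
$$
is a bounded absorbing set, i.e.\ for every bounded $B\subset H_b$ there is $T=T(B)$ with $S(t)B\subset \mathcal B$ for $t\ge T$. Second, I would establish asymptotic compactness in $H_{loc}$ by combining this dissipative estimate with the local smoothing property from Proposition \ref{Prop4.smoo}, applied on unit time intervals: for $t\ge 1$ and $u_0\in\mathcal B$, we may view $u(t)=S(1)(S(t-1)u_0)$, with $S(t-1)u_0\in\mathcal B$, and thereby obtain a bound $\|u(t)\|_{W^{1,2}_b}\le M$ in which $M$ depends only on $\|g\|_{L^2_b}+\|\rot g\|_{L^\infty}$. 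The compactness of the embedding $W^{1,2}(B^1_{x_0})\hookrightarrow L^2(B^1_{x_0})$ on each ball, together with a diagonal argument in $x_0\in\R^2$, then yields precompactness of $S(t)\mathcal B$ in $H_{loc}$. Third, I would invoke Theorem \ref{Th3.unique}: the weighted continuous dependence estimate \eqref{uniqueness} implies, on bounded subsets of $H_b$, that $S(t)$ is continuous in the $L^2_{\theta_{R,x_0}}$-topology, which is precisely the $H_{loc}$-topology on such bounded sets.

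With these three ingredients, the abstract existence theorem produces a set $\mathcal A\subset H_b$ that is bounded in $H_b$, compact in $H_{loc}$, strictly invariant, and attracts bounded sets of $H_b$ in the $H_{loc}$-topology, i.e.\ satisfies Definition \ref{Def5.attr}. The representation \eqref{5.atrattr} is then standard: invariance gives that every $u_0\in\mathcal A$ generates a complete trajectory $u(\cdot)\in\mathcal K$ with $u(0)=u_0$, so $\mathcal A\subset\mathcal K|_{t=0}$; conversely, every bounded complete trajectory remains forever inside the absorbing ball $\mathcal B$, hence is attracted to itself in $H_{loc}$ and therefore lies in $\mathcal A$, giving $\mathcal K|_{t=0}\subset\mathcal A$.

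The main obstacle I foresee is the second step, namely upgrading the \emph{local-in-time} smoothing estimate of Proposition \ref{Prop4.smoo} (valid only on a short interval $[0,T]$ depending on the initial $L^2_b$-norm) to a \emph{uniform} $W^{1,2}_b$ bound at all large times; the absorbing property \eqref{5.dis} is exactly what resolves this, since once the trajectory enters $\mathcal B$, the ``initial norm'' entering $T$ and the right-hand side of \eqref{4.locsm} becomes controlled by the forcing alone, allowing reinitialization on unit time intervals. Once this uniform parabolic smoothing is in hand, the rest of the argument is a routine application of the abstract theory.
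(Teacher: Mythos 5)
Your proposal is correct and follows essentially the same route as the paper: a bounded absorbing set from Theorem \ref{Th5.dis}, upgraded to a compact (in $H_{loc}$) absorbing set in $W^{1,2}_b$ by reinitializing the smoothing estimate \eqref{4.locsm} on the absorbing ball, continuity of $S(t)$ in the local topology from the uniqueness estimate \eqref{uniqueness}, and then the abstract attractor existence theorem together with the standard identification of $\mathcal A$ with $\mathcal K\big|_{t=0}$. The only cosmetic difference is that you phrase the compactness step as asymptotic compactness with a diagonal argument, whereas the paper directly exhibits the ball $\mathcal B^1_R$ of $H^1_b$ as a compact absorbing set; these are the same ingredient.
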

\begin{proof} According to the abstract attractor's existence theorem, see e.g. \cite{BV89}, we need to verify two properties: 1) the existence of an absorbing set $\mathcal B$ which is bounded in $H_b$ and is compact in $H_{loc}$ and 2) that the operators $S(t)$ are continuous in the $H_{loc}$-topology on $\mathcal B$ for any fixed $t$.
\par
Indeed, due to \eqref{5.dis}, the ball $\mathcal B_R$ in the space $H_b$ will be an absorbing set for the semigroup $S(t)$
if $R$ is large enough
although it is not compact in $H_{loc}$. However, combining \eqref{5.dis} with the smoothing property \eqref{4.locsm}, we see that every
solution started from $u_0\in\mathcal B_R$ will be bounded in the space $H^1_b:=H_b\cap W^{1,2}_b(\R^2)$ if $t\ge T$. Thus, the $R$-ball $\mathcal B_R^1$ of $H^1_b$ will be the desired absorbing set for the semigroup $S(t)$ if $R$ is large enough (obviously, this set is compact in the local topology of $H_{loc}$). Thus, the first property holds.
\par
The second property is an immediate corollary of estimate \eqref{uniqueness} and the elementary fact that the topologies induced on $\mathcal B^1_R$ by the embeddings to $L^2_{loc}(\R^2)$ and $L^2_{\theta_{R,x_0}}(\R^2)$ coincide.
\par
Thus, all assumptions of the abstract attractor's existence theorem are verified and, therefore, $\mathcal A$ exists. The formula \eqref{5.atrattr}  also follows from this theorem and the corollary is proved.
\end{proof}
\begin{remark} It is not difficult to see that the factual smoothness of the attractor is restricted by the smoothness of the external forces $g$ only. In articular, if $g\in C^\infty_b(\R^2)$, then the attractor will be also $C^\infty$-smooth. It also worth to mention that Theorem \ref{Th5.dis} and estimate \eqref{5.fingal} show that
%$$
\begin{equation}\label{5.boundsattr}
\|\mathcal A\|_{L^2_b}\le C\|g\|_{L^2_b}\(\|g\|_{L^2_b}+\|\rot g\|_{L^\infty}\)^2,
\end{equation}
%$$
where the constant $C$ is independent of the choice of $g$ (but, of course, depends on $\alpha>0$).
\end{remark}
\begin{remark} We note that, in general, the attractor $\mathcal A$ is not compact in the initial topology of $H_b$ (but only in the local topology of $H_{loc}$). However, if the external forces $g$ decay as $|x|\to\infty$ ($g\in \dot L^2_b(\Omega)$), then arguing analogously to \cite{EMZ04}, one can prove that $\mathcal A$ is not only compact in $H_b$, but also has the finite fractal dimension in this space. We return to the more detailed study of this case in the forthcoming paper.
\end{remark}

\section{Classical Navier-Stokes problem: polynomial growth of infinite-energy solutions}\label{s6}

In this concluding section, we apply the technique developed above to the study of spatially non-decaying solutions of the classical Navier-Stokes problem in $\R^2$:
%$$
\begin{equation}\label{6.1}
\begin{cases}
\Dt u+(u,\Nx)u=\Dx u+\Nx p+g,\\ \divv u=0,\ \
u\big|_{t=0}=u_0
\end{cases}
\end{equation}
%$$
which corresponds to the choice of $\alpha=0$ in \eqref{0.1}. In contrast to the case $\alpha>0$, we cannot expect that every solution is globally bounded in time since for the simplest spatially homogeneous case $g\equiv const\ne0$, we have linearly growing in time solution $u(t)=tg$. The purpose of this section is to prove that all solutions of this problem starting with $u_0\in H_b$ grow at most polynomially in time. Namely, the following result holds.

\begin{theorem}\label{Th6.main} Let the assumptions of Theorem \ref{Th5.dis} hold. Then, every weak solution of the Navier-Stokes problem \eqref{6.1} satisfies the estimate
%$$
\begin{equation}\label{6.poly}
\|u(t)\|_{L^2_b}\le Q(\|u_0\|_{L^2_b}+\|g\|_{L^2_b}+\|\rot g\|_{L^\infty})(t+1)^5,\ \ t\in\R_+,
\end{equation}
%$$
where the monotone function $Q$ is independent of $t$ and $u$.
\end{theorem}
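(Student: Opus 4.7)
The plan is to combine the weighted energy argument of Theorem \ref{Th4.main} with the time-dependent scale technique of Theorem \ref{Th5.dis}, taking $R(t)$ to grow polynomially in $t$ so as to absorb the linearly growing vorticity that arises when $\alpha=0$. By Proposition \ref{Prop4.linfsm} we may assume without loss of generality that $\rot u_0\in L^\infty(\R^2)$, and we set $M_g:=\|\rot u_0\|_{L^\infty}+\|\rot g\|_{L^\infty}$. With $\alpha=0$ the comparison principle applied to \eqref{4.vor} yields only the linearly growing bound
\begin{equation*}
\|\omega(t)\|_{L^\infty}\le\|\rot u_0\|_{L^\infty}+t\|\rot g\|_{L^\infty}\le(1+t)M_g,
\end{equation*}
and it is this quantity that ultimately drives the polynomial rate.

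Rerunning the derivation of the key inequality \eqref{4.main} with $\alpha=0$ and a smooth time-dependent $R=R(t)$, keeping track of the extra contributions \eqref{5.phiT}--\eqref{5.thetaT} generated by differentiating $\varphi_{R(t),x_0}$ and $\theta_{R(t),x_0}$, one obtains
\begin{equation*}
\frac{d}{dt}Z_{R(t),x_0}(u)+\Bigl(2\beta-KR(t)^{-1/2}\bigl(R(t)^{-1}\|u\|_{L^2_{b,R(t)}}+\|\omega\|_{L^\infty}\bigr)\Bigr)\int_{\R^2}\theta_{R(t),x_0}(x)\|u\|^2_{W^{1,2}(B^{R(t)}_x)}\,dx\le K_1\frac{|R'(t)|}{R(t)}Z_{R(t),x_0}(u)+CZ_{R(t),x_0}(g)
\end{equation*}
with absolute constants $\beta,K,K_1>0$. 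The essential difference with \eqref{5.maindis} is that the coercive lower-order term $\beta Z$ created by the damping $\alpha u$ is now missing, so the factor $K_1|R'|/R$ must be carried through Gronwall rather than absorbed.

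Make the ansatz $R(t):=N(1+t)^4$ with $N\gg 1$ to be fixed below in terms of the data. Then $|R'(t)|/R(t)=4/(1+t)$, $R(t)^{-1/2}\|\omega(t)\|_{L^\infty}\le N^{-1/2}M_g$, and $Z_{R(t),x_0}(g)\le CN(1+t)^4\|g\|^2_{L^2_b}$ by \eqref{ulR}. Assuming self-consistently the coercivity condition
\begin{equation*}
KR(t)^{-1/2}\bigl(R(t)^{-1}\|u(t)\|_{L^2_{b,R(t)}}+\|\omega(t)\|_{L^\infty}\bigr)\le 2\beta,\qquad t\ge 0,
\end{equation*}
the main inequality reduces to $\frac{d}{dt}Z\le\frac{4K_1}{1+t}Z+CN(1+t)^4\|g\|^2_{L^2_b}$. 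Gronwall with integrating factor $(1+t)^{4K_1}$, together with $Z_{R(0),x_0}(u_0)\le CN\|u_0\|^2_{L^2_b}$, produces a bound of the form $Z_{R(t),x_0}(u(t))\le CN(1+t)^\gamma(\|u_0\|^2_{L^2_b}+\|g\|^2_{L^2_b})$ with $\gamma$ explicitly computable from $K_1$; translating back via \eqref{0.281} one gets $\|u(t)\|^2_{L^2_{b,R(t)}}\le CR(t)\sup_{x_0}Z_{R(t),x_0}(u)\le CN^2(1+t)^{4+\gamma}(\|u_0\|^2_{L^2_b}+\|g\|^2_{L^2_b})$, and since $\|u(t)\|_{L^2_b}\le\|u(t)\|_{L^2_{b,R(t)}}$, the exponent in the ansatz can be tuned (the exponent $4$ can be replaced by any sufficiently large $a\ge 2$) so that the final bound takes the desired form $\|u(t)\|_{L^2_b}\le Q(\|u_0\|_{L^2_b}+\|g\|_{L^2_b}+M_g)(1+t)^5$.

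Self-consistency of the coercivity condition is established by the continuity argument used at the end of the proofs of Theorems \ref{Th0.1} and \ref{Th5.dis}. At $t=0$ the combination $R(0)^{-1/2}(R(0)^{-1}\|u_0\|_{L^2_{b,R(0)}}+\|\omega(0)\|_{L^\infty})$ is bounded by $CN^{-1/2}(\|u_0\|_{L^2_b}+M_g)$, which is strictly less than $2\beta/K$ once $N$ is chosen large enough in terms of the data; using the derived polynomial bound, the same combination at time $t>0$ is controlled by $CN^{-1/2}(\|u_0\|_{L^2_b}+\|g\|_{L^2_b}+M_g)\bigl((1+t)^{-3/2}+(1+t)^{-1}\bigr)$, so the threshold is never reached. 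The main obstacle is exactly the disappearance of the coercive $\beta Z$ term forced by $\alpha=0$: in Theorem \ref{Th5.dis} it neutralized the contribution $K_1|R'|/R\cdot Z$, whereas here this contribution must be propagated through Gronwall, and the quantitative balance between the exponent of $R(t)$ demanded by the linearly growing vorticity in the coercivity condition and the Gronwall factor it produces is what ultimately fixes the $(1+t)^5$ rate in \eqref{6.poly}.
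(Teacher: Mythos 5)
Your overall strategy (combine the weighted energy machinery with the linearly growing vorticity bound and let the spatial scale grow like $(1+t)^4$) is the right one, and the exponent $4$ for $R$ matches the paper's choice. But the implementation via a smoothly varying $R(t)$ has two genuine gaps. First, your main differential inequality is not what one actually obtains for $\alpha=0$: without damping there is no $\alpha\|u\|^2_{L^2_{\varphi_{R,x_0}}}$ on the left, so the dissipation integral contains only $\|\Nx u\|^2_{L^2(B^R_x)}$ rather than the full $W^{1,2}$-norm, and the $L^2$-part of the interpolated cubic term survives as an extra term $KR^{-1/2}\bigl(R^{-1}\|u\|_{L^2_{b,R}}+\|\omega\|_{L^\infty}\bigr)Z_{R,x_0}(u)$ on the right-hand side (compare \eqref{6.main}); likewise the forcing term $(g,u\varphi_{R,x_0})$ leaves an unabsorbable $\eps Z_{R,x_0}(u)$ after the Cauchy--Schwarz split. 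Under your coercivity threshold $KR^{-1/2}(\cdots)\le 2\beta$ the omitted term is only bounded by $2\beta Z$, and $\int_0^t 2\beta\,ds=2\beta t$ turns Gronwall into an exponential bound, destroying \eqref{6.poly}. This is exactly why the paper imposes the much stronger condition \eqref{6.bad} with threshold $2\beta T^{-1}$, whose integral over $[0,T]$ is the constant $2\beta$; it is this $T^{-1}$, combined with $\|\omega\|_{L^\infty}\sim T$, that forces $R\sim (T+1)^4$ — in your version the vorticity alone would only demand $R\sim(1+t)^2$, so your stated balance does not actually produce the exponent $4$.

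Second, even granting a corrected inequality, your Gronwall coefficient $4K_1/(1+t)$ integrates to $4K_1\log(1+t)$ and yields the factor $(1+t)^{4K_1}$, where $K_1$ is an uncontrolled absolute constant coming from \eqref{5.phiT}--\eqref{5.thetaT}; the final exponent then depends on $K_1$ and is not $5$. The proposed tuning cannot repair this: replacing $4$ by $a$ in the ansatz changes the factor to $(1+t)^{aK_1}$, which worsens as $a$ grows and breaks coercivity as $a$ shrinks. The paper avoids both problems simultaneously by \emph{freezing} $R=R(T)\sim(T+1)^4$ on the whole interval $[0,T]$ — so no time derivatives of the weights appear at all — and by arranging every $Z$-coefficient on the right-hand side to be of size $T^{-1}$, so that Gronwall over $[0,T]$ contributes only the constant factor $e^{C+2\beta}$. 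The bound is then read off at $t=T$: $\|u(T)\|^2_{L^2_b}\le CR\sup_{x_0}Z_{R,x_0}\le CR^2(T+1)^2=C(T+1)^{10}$, giving the quintic rate. If you want to keep a genuinely time-dependent $R(t)$, you would need to justify why the weight-derivative contributions integrate to a bounded quantity rather than to a multiple of $\log(1+t)$, and that is precisely what the frozen-$R$ device is there to avoid.
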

\begin{proof} As in the proof of Theorem \ref{Th4.main}, we may assume without loss of generality that $\rot u_0\in L^\infty(\R^2)$ and we may use the maximum principle for the vorticity equation. However, the absence of the dissipative term $\alpha \omega$ does not allow to obtain the dissipative or even globally bounded in time estimate and, instead of \eqref{4.max}, we have
%$$
\begin{equation}\label{6.max}
\|\omega(t)\|_{L^\infty}\le \|\rot u_0\|_{L^\infty}+t\|\rot g\|_{L^\infty},\ \ t\in\R_+,
\end{equation}
%$$
where the right-hand side grows linearly in time.
\par
Similar to Theorem \ref{Th5.dis} the parameter $R$ in weighted energy estimates will depend on $t$, but now $R(t)$ will grow in time in order to compensate the absence of the dissipative term $\alpha u$. So, we fix a big $T$ and consider equation \eqref{6.1} on the time interval $t\in[0,T]$. Following the general scheme described above, we multiply it on $u\varphi_{R,x_0}(x)$ where the parameter $R$ will depend not only on the initial data, but also on $T$ and $x_0\in\R^2$. Then, analogously to \eqref{0.13}, we get
%$$
\begin{multline}\label{6.13}
\frac12 \frac d{dt}\|u(t)\|_{L^2_{\varphi_{R,x_0}}}^2+\|\Nx u\|^2_{L^2(B^R_{x_0})}\le\\\le C T\|g\|_{L^2_{\varphi_{R,x_0}}}^2+C(R^{-2}+\frac1T)\|u\|_{L^2(B^{2R}_{x_0})}^2+
 CR^{-1}\|u\|^3_{L^3(B^{2R}_{x_0})}+|(\Nx P(u),\varphi_{R,x_0}u)|,
\end{multline}
%$$
where the constant $C$ is independent on $T$ and $R$. Here we have estimated the term with the external forces as follows:
$$
|(g,u\varphi_{R,x_0})|\le\|g\|_{L^2_{\varphi_{R,x_0}}}\|u\|_{L^2_{\varphi_{R,x_0}}}\le T\|g\|^2_{L^2_{\varphi_{R,x_0}}}+T^{-1}\|u\|^2_{L^2_{\varphi_{R,x_0}}}
$$
which looks as an optimal one (in the absence of the dissipative term $\alpha u$) if we consider the solution on the time interval $t\in[0,T]$ only.
\par
The estimates for the pressure term are identical to the damped case, considered before, so, analogously to \eqref{4.main}, we end up with
%$$
\begin{multline}\label{6.main}
\frac d{dt}Z_{R,x_0}(u(t))+\\+\(2\beta-KR^{-1/2}(R^{-1}\|u\|_{L^2_{b,R}}+\|\omega\|_{L^\infty})\) \int_{x\in\R^2}\theta_{R,x_0}(x)\|\Nx u\|_{L^{2}(B^R_x)}^2\,dx\le C T Z_{R,x_0}(g)+\\+
\(CR^{-2}+C\frac1T+KR^{-1/2}(R^{-1}\|u\|_{L^2_{b,R}}+\|\omega\|_{L^\infty})\)Z_{R,x_0}(u),
\end{multline}
%$$
where the constants $C$ and $K$ are independent on $R$ and $T$. Assuming that $R$ is large enough, we transform it to
%$$
\begin{multline}\label{6.main1}
\frac d{dt}Z_{R,x_0}(u(t))+\\+\(2\beta-KR^{-1/2}(R^{-1}\|u\|_{L^2_{b,R}}+\|\omega\|_{L^\infty}+1)\) \int_{x\in\R^2}\theta_{R,x_0}(x)\|\Nx u\|_{L^{2}(B^R_x)}^2\,dx\le C T Z_{R,x_0}(g)+\\+
\(C\frac1T+KR^{-1/2}(R^{-1}\|u\|_{L^2_{b,R}}+\|\omega\|_{L^\infty}+1)\)Z_{R,x_0}(u),
\end{multline}
%$$
where $C$ and $K$ are independent of $R$ and $T$. In order to be able to control the last term in the right-hand side, we need to assume that
%$$
\begin{equation}\label{6.bad}
KR^{-1/2}(R^{-1}\|u\|_{L^2_{b,R}}+\|\omega\|_{L^\infty}+1)\le 2\beta T^{-1},\ \ t\in[0,T].
\end{equation}
%$$
Then, \eqref{6.main1} reads
%$$
\begin{equation}\label{6.gr}
\frac d{dt}Z_{R,x_0}(u(t))-\frac {C+2\beta}TZ_{R,x_0}(u(t))\le CTZ_{R,x_0}(g),\ \ t\in[0,T]
\end{equation}
%$$
and the Gronwall's inequality together with \eqref{ulR} gives
%$$
\begin{equation}\label{6.gr1}
Z_{R,x_0}(u(t)\le C_1(T+1)^2R(\|u_0\|^2_{L^2_b}+\|g\|^2_{L^2_b}),\ \ t\in[0,T],
\end{equation}
%$$
where the constant $C_1$ is independent of $R$ and $T$. Thus, using \eqref{6.max} and \eqref{1.ul-wR}, we end up with
%$$
\begin{equation}\label{6.good}
R^{-1}\|u(t)\|_{L^2_{b,R}}+\|\omega(t)\|_{L^\infty}+1\le C_2(T+1)(\|u_0\|_{L^2_b}+\|\rot u_0\|_{L^\infty}+\|g\|_{L^2_b}+\|\rot g\|_{L^\infty}+1)
\end{equation}
%$$
for all $t\in[0,T]$. Estimate \eqref{6.good} implies \eqref{6.bad} if we fix
%$$
\begin{equation}\label{6.R}
R=\(\frac{C_2 K}\beta\)^2(\|u_0\|_{L^2_b}+\|\rot u_0\|_{L^\infty}+\|g\|_{L^2_b}+\|\rot g\|_{L^\infty}+1)^2(T+1)^4.
\end{equation}
%$$
Then the approximation and continuity arguments (see the end of the proof of Theorem \ref{Th0.1}) show that both \eqref{6.bad} and \eqref{6.good} are
satisfied under this choice of the parameter $R$. It only remains to note that \eqref{6.good} and \eqref{6.R} imply \eqref{6.poly} and finish the proof of the theorem.
\end{proof}
\begin{remark}\label{Rem6.strange} Although estimate \eqref{6.poly} essentially improves the super-exponential  upper bounds for the growth of $u(t)$ in time known before (see \cite{GMS01} and \cite{ST07}), it is probably still not optimal. Indeed, to the best of our knowledge there are no examples where $u(t)$ grow faster than linear. However, the proved theorem gives a bit more information, namely, it factually shows that the {\it mean value} of $u(t)$ over the large ball of radius $R=(t+1)^4$ grows not faster than linear:
%$$
\begin{equation}\label{6.strange}
(t+1)^{-4}\|u\|_{L^2_{b,(t+1)^4}}\le C(t+1),
\end{equation}
%$$
where $C$ is independent of time, and this estimate is already {\it optimal} since this quantity grows exactly linearly in time for the spatially homogeneous solution of \eqref{6.1} mentioned at the beginning of the section. Roughly speaking, this shows that a solution with the super-linear growth in time (if it exists) should be "essentially non-homogeneous" in space.
\end{remark}

\section{Appendix 1: Approximations of divergence free vector fields}\label{sA}
The aim of this Appendix is to construct the  sequence of divergence free vector fields with finite supports which approximates a given divergence free vector field   $u\in L^2_b(\R^2)$ or $u\in\dot L^2_b(\R^2)$. To this end, we will essentially use the stream
function $\Theta\in W^{1,2}_{loc}(\R^2)$ which generates the  vector field $u$ via $u=\nabla^\perp\Theta$. However, for general divergent free $u\in L^2_b(\R^2)$, the associated stream function does not belong to $W^{1,2}_b(\R^2)$ and may grow when $|x|\to\infty$. The next standard lemma shows that this growth is at most linear.

\begin{lemma}\label{LemA.stream} Let $u\in L^2_{loc}(\R^2)$ be a divergent free vector field. Then, the associated stream function $\Theta\in W^{1,2}_{loc}(\R^2)$ can be chosen in such way that
%$$
\begin{equation}\label{A.est}
\|\Theta\|_{L^2(B^1_{(r_1,r_2)})}\le C\int_0^{r_1}\|u\|_{L^2(B^2_{(s,0)})}\,ds+C\int_0^{r_2}\|u\|_{L^2(B^2_{(r_1,s)})}\,ds,
\end{equation}
%$$
where the constant $C$ is independent on $u$ and $(r_1,r_2)\in\R^2$.
\end{lemma}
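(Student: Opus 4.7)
The plan is to construct $\Theta$ by an L-shaped line integral of a mollification of $u$, derive the estimate for the smooth approximation, and then pass to the limit. Let $\rho_\eps$ be a standard mollifier with $\supp \rho_\eps \subset B^\eps_0$ for some fixed $\eps<1$, and set $u^\eps:=u*\rho_\eps$, which is smooth and still divergence free. Define
$$
\Theta^\eps(r_1,r_2):=-\int_0^{r_1} u^\eps_2(s,0)\,ds+\int_0^{r_2}u^\eps_1(r_1,s)\,ds.
$$
A direct differentiation, in which one rewrites $\partial_{r_1}u^\eps_1=-\partial_{s}u^\eps_2$ inside the second integral via $\divv u^\eps=0$ and applies the fundamental theorem of calculus, yields $\nabla\Theta^\eps=(-u^\eps_2,u^\eps_1)$; i.e.\ $u^\eps=\nabla^\perp\Theta^\eps$. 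This is just the path-independence of the line integral, which of course requires the divergence-free property.

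The key ingredient for the estimate is the pointwise bound
$$
|u^\eps(x)|\le C\|u\|_{L^2(B^1_x)},
$$
obtained from Cauchy--Schwarz applied to the convolution (using $\supp\rho_\eps\subset B^1_0$). Substituting into the definition of $\Theta^\eps$ gives, for any $(x_1,x_2)$,
$$
|\Theta^\eps(x_1,x_2)|\le C\int_0^{x_1}\|u\|_{L^2(B^1_{(s,0)})}\,ds+C\int_0^{x_2}\|u\|_{L^2(B^1_{(x_1,s)})}\,ds.
$$
For $(x_1,x_2)\in B^1_{(r_1,r_2)}$ (WLOG $r_1,r_2\ge0$, the other cases being handled by reflection), we have $|x_i-r_i|\le 1$, so the integration ranges are pushed to $[0,r_1+1]$ and $[0,r_2+1]$, and the balls satisfy $B^1_{(x_1,s)}\subset B^2_{(r_1,s)}$ and $B^1_{(s,0)}\subset B^2_{(s,0)}$. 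The tails over $[r_i,r_i+1]$ are controlled by a single $L^2$-norm over an $O(1)$ ball and absorbed into the constant, leading to a pointwise bound on $\Theta^\eps(x_1,x_2)$ by the right-hand side of \eqref{A.est}. Since that right-hand side is independent of $(x_1,x_2)$, integrating over $B^1_{(r_1,r_2)}$ gives estimate \eqref{A.est} for $\Theta^\eps$ (with the constant enlarged by the area of the ball).

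To finish, pass to the limit $\eps\to 0$. The established $L^2$-bound is uniform in $\eps$ on every fixed ball, and $\nabla\Theta^\eps=(-u_2^\eps,u_1^\eps)$ converges in $L^2_{loc}(\R^2)$ to $(-u_2,u_1)$. Hence $\{\Theta^\eps\}$ is bounded in $W^{1,2}_{loc}(\R^2)$, and a subsequence converges weakly in $W^{1,2}_{loc}$ to some $\Theta$, which must satisfy $u=\nabla^\perp\Theta$ and therefore is an admissible stream function. Estimate \eqref{A.est} transfers to $\Theta$ by lower semicontinuity of the $L^2$-norm under weak convergence.

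The main obstacle is the low regularity of $u$: the line integral in the definition of $\Theta$ is not meaningful for arbitrary $L^2_{loc}$ functions, because their restrictions to lines are ill-defined. The mollification step resolves this by replacing pointwise values of $u$ on the path by controlled averages over nearby unit balls, which is precisely what makes the $L^2$-norms over $B^2$-balls appear on the right-hand side of \eqref{A.est}. Everything else is straightforward bookkeeping on the integration domains.
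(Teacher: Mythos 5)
Your overall architecture (L-shaped path integral, then an approximation argument) matches the paper's, but the regularization device you chose breaks down at the key quantitative step. The pointwise bound $|u^\eps(x)|\le C\|u\|_{L^2(B^1_x)}$ is \emph{not} uniform in $\eps$: Cauchy--Schwarz applied to $u*\rho_\eps$ gives the constant $\|\rho_\eps\|_{L^2}\sim \eps^{-1}$ in two dimensions, and no $\eps$-independent constant can exist, since letting $\eps\to0$ at Lebesgue points would yield $|u(x)|\le C\|u\|_{L^2(B^1_x)}$ a.e., i.e.\ $L^2_{loc}(\R^2)\subset L^\infty_{loc}(\R^2)$, which is false. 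Consequently your bound on $\Theta^\eps$ carries a constant that blows up as $\eps\to0$, and the final weak-compactness/lower-semicontinuity step cannot produce \eqref{A.est} with a universal constant — indeed it cannot even produce the uniform local $L^2$ bound on $\Theta^\eps$ needed to extract a weakly convergent subsequence. A Fubini rearrangement does not rescue this: averaging the line integral over the $\eps$-neighbourhood of the path only spreads it over a strip of width $O(\eps)$, and Cauchy--Schwarz then still costs a factor $\eps^{-1/2}$.

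The repair is exactly the paper's device, and it is worth seeing why it is forced. Instead of mollifying $u$ at scale $\eps\to0$, the paper keeps $u$ as it is and averages the \emph{base point} $(x_0,y_0)$ of the L-shaped path over the fixed unit ball $B^1_0$. This replaces each leg of the path integral by a genuinely two-dimensional integral over a strip of width $O(1)$, which Cauchy--Schwarz over unit squares controls by $\int\|u\|_{L^2(B^2_{(s,0)})}\,ds$ and $\int\|u\|_{L^2(B^2_{(r_1,s)})}\,ds$ — this is precisely where the $B^2$-balls in \eqref{A.est} come from, with a constant independent of $u$ and of the path. The resulting $\Theta$ is defined directly for $u\in L^2_{loc}$, with no limit passage needed. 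Your closing remark correctly identifies the obstacle (restrictions of $L^2_{loc}$ functions to lines are ill-defined), but mollification at a vanishing scale does not remove it; averaging at unit scale does.
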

\begin{proof} Indeed, for smooth $u$, the stream function $\Theta$ can be restored via the curvilinear integral
%$$
\begin{equation}\label{A.path}
\Theta(x,y)=\int_{\gamma(x,y)}-u_2(x,y)\,dx+u_1(x,y)\,dy,
\end{equation}
%$$
where $\gamma(x,y)=\gamma(x,y,x_0,y_0)$ is a curve connecting an arbitrary point $(x_0,y_0)$ with $(x,y)$, for instance, one may take the piece-wise linear path
connecting first $(x_0,y_0)$ with $(x,y_0)$ and then $(x,y_0)$ with $(x,y)$. Here $(x_0,y_0)$ is an arbitrary point (e.g., $(x_0,y_0)=(0,0)$).
In the general case  when $u\in L^2_{loc}(\R^2)$, the path integral \eqref{A.path} may be ill-posed for some exceptional values of $(x_0,y_0)$,
so we average it over $(x_0,y_0)\in B^1_0$ and write
%$$
\begin{equation}\label{A.path1}
\Theta(x,y)=\frac1{|B^1_0|}\int_{(x_0,y_0)\in B^1_0}\(\int_{\gamma(x,y,x_0,y_0)}-u_2(x,y)\,dx+u_1(x,y)\,dy\)\,dx_0\,dy_0.
\end{equation}
%$$
It is not difficult to check that this integral is well-defined for any $u\in L^2_{loc}(\R^2)$ and indeed $u=\nabla^\perp\Theta$ for any divergent free $u$. Moreover \eqref{A.est} also follows by the straightforward estimates and the lemma is proved.
\end{proof}
\begin{corollary} Let $u\in L^2_b(\R^2)$ be divergent free. Then there exists a stream function $\Theta\in W^{1,2}_{loc}(\R^2)$ such that
%$$
\begin{equation}\label{A.bound}
(|x_0|+1)^{-1}\|\Theta\|_{L^2(B^1_{x_0})}\le C\|u\|_{L^2_b},
\end{equation}
where $C$ is independent of $x_0$. If, in addition, $u\in \dot L^2_b(\R^2)$, then
%$$
\begin{equation}\label{A.dot}
\lim_{|x_0|\to\infty}(|x_0|+1)^{-1}\|\Theta\|_{L^2(B^1_{x_0})}=0.
\end{equation}
%$$
\end{corollary}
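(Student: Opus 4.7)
The plan is to deduce both assertions directly from Lemma \ref{LemA.stream} by taking $(r_1,r_2)=x_0$ and controlling the two one-dimensional integrals on its right-hand side in terms of the ambient norm of $u$.

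For \eqref{A.bound}, a standard covering of $B^2_y$ by a bounded number of unit balls yields $\|u\|_{L^2(B^2_y)}\le C\|u\|_{L^2_b}$ uniformly in $y\in\R^2$, so Lemma \ref{LemA.stream} gives
\begin{equation*}
\|\Theta\|_{L^2(B^1_{x_0})}\le C(|r_1|+|r_2|)\|u\|_{L^2_b}\le C'(|x_0|+1)\|u\|_{L^2_b},
\end{equation*}
which is the desired estimate after dividing through by $|x_0|+1$.

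For \eqref{A.dot}, fix $\varepsilon>0$ and, using the hypothesis $u\in\dot L^2_b(\R^2)$, pick $N=N(\varepsilon)$ so large that $\|u\|_{L^2(B^2_y)}<\varepsilon$ whenever $|y|\ge N$. In the first integral of Lemma \ref{LemA.stream}, split the $s$-range into $|s|\le N$ and $|s|>N$: since $|(s,0)|=|s|$, the former piece contributes at most $N\|u\|_{L^2_b}$ and the latter at most $\varepsilon|r_1|$. For the second integral, use $|(r_1,s)|\ge\max(|r_1|,|s|)$: if $|r_1|>N$ the integrand is bounded by $\varepsilon$ throughout, giving at most $\varepsilon|r_2|$, while if $|r_1|\le N$ we split once more at $|s|=N$ to obtain the bound $N\|u\|_{L^2_b}+\varepsilon|r_2|$. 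Combining,
\begin{equation*}
\|\Theta\|_{L^2(B^1_{x_0})}\le C_1 N\|u\|_{L^2_b}+C_1\varepsilon(|x_0|+1).
\end{equation*}
Dividing by $|x_0|+1$ and letting $|x_0|\to\infty$ yields $\limsup_{|x_0|\to\infty}(|x_0|+1)^{-1}\|\Theta\|_{L^2(B^1_{x_0})}\le C_1\varepsilon$; since $\varepsilon>0$ is arbitrary, \eqref{A.dot} follows.

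I do not anticipate a substantial obstacle: the corollary is essentially a bookkeeping deduction from the pointwise inequality of the preceding lemma. The only point requiring attention is the case analysis for the vertical integral $\int_0^{r_2}\|u\|_{L^2(B^2_{(r_1,s)})}\,ds$, where one must distinguish whether the horizontal coordinate $r_1$ is itself large enough to force the integrand below $\varepsilon$ uniformly in $s$, or whether the decay must instead be extracted in the variable $s$.
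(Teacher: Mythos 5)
Your proposal is correct and follows exactly the route the paper intends: the paper simply declares both statements ``immediate corollaries'' of estimate \eqref{A.est}, and your argument supplies precisely the omitted bookkeeping (the uniform bound $\|u\|_{L^2(B^2_y)}\le C\|u\|_{L^2_b}$ for \eqref{A.bound}, and the $\varepsilon$--$N$ splitting of the two line integrals for \eqref{A.dot}). The case analysis you flag for the vertical integral is handled correctly, so there is nothing to add.
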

Indeed, both statements are immediate corollaries of estimate \eqref{A.est}.
\par
The next corollary can be considered as a main result of the Appendix.

\begin{corollary}\label{CorA.app} Let $u\in L^2_b(\R^2)$ be divergence free. Then, there exists a sequence $u^N\in L^2_b(\R^2)$, $N\in\Bbb N$, of divergence free vector fields such that
%$$
\begin{equation}\label{app}
\|u^N\|_{L^2_b}\le C\|u\|_{L^2_b},\ \ \supp u^N\subset B^{2N}_0,\ \ \ u^N(x)=u(x),\ x\in B^N_0,
\end{equation}
%$$
where the constant $C$ is independent of $N$ and $u$. If, in addition, $u\in\dot L^2_b(\R^2)$ then
%$$
\begin{equation}\label{A.unlim}
\lim_{|x_0|\to\infty}\|u^N\|_{L^2(B^1_{x_0})}=0
\end{equation}
%$$
uniformly with respect to $N$.
\end{corollary}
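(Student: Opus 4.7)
The natural construction uses the stream function together with a spatial cut-off. By Lemma \ref{LemA.stream} and the corollary that follows it, we may pick $\Theta\in W^{1,2}_{loc}(\R^2)$ with $u=\nabla^\perp\Theta$ satisfying the linear bound $\|\Theta\|_{L^2(B^1_{x_0})}\le C(|x_0|+1)\|u\|_{L^2_b}$, and the extra decay \eqref{A.dot} if $u\in\dot L^2_b(\R^2)$. Fix once for all a smooth cut-off $\psi\in C_0^\infty(\R^2)$ with $\psi\equiv1$ on $B^1_0$ and $\supp\psi\subset B^2_0$, and set $\psi_N(x):=\psi(x/N)$, so that $|\nabla\psi_N|\le CN^{-1}$ and $\supp\nabla\psi_N\subset\{N\le|x|\le2N\}$. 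Define
\[
u^N := \nabla^\perp(\psi_N\,\Theta) = \psi_N\,u + \Theta\,\nabla^\perp\psi_N.
\]
By construction $u^N$ is divergence free, is supported in $B^{2N}_0$, and agrees with $\nabla^\perp\Theta=u$ on $B^N_0$, so the pointwise requirements in \eqref{app} are automatic.

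The only real work is the uniform bound $\|u^N\|_{L^2_b}\le C\|u\|_{L^2_b}$, and the crux is the second summand. The first summand is trivial: $\|\psi_N u\|_{L^2(B^1_{x_0})}\le\|u\|_{L^2(B^1_{x_0})}\le\|u\|_{L^2_b}$ for all $x_0$. For the second, observe that if $B^1_{x_0}\cap\supp\nabla\psi_N\ne\emptyset$ then $N-1\le|x_0|\le 2N+1$; combining the bound on $\nabla\psi_N$ with \eqref{A.bound} gives
\[
\|\Theta\,\nabla^\perp\psi_N\|_{L^2(B^1_{x_0})} \le \frac{C}{N}\,\|\Theta\|_{L^2(B^1_{x_0})} \le \frac{C(|x_0|+1)}{N}\,\|u\|_{L^2_b}\le C'\,\|u\|_{L^2_b},
\]
since $(|x_0|+1)/N$ is bounded on the relevant range. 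Taking the supremum over $x_0$ yields \eqref{app}. This is the step where the construction really hinges: the stream function is only controlled up to linear growth, and the cut-off derivative only decays like $1/N$, so one genuinely needs the rescaling $\psi_N(x)=\psi(x/N)$ in order for the two rates to cancel on the annulus where $\nabla\psi_N$ lives.

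For the decay statement in the $\dot L^2_b$ case, both summands are handled separately and uniformly in $N$. The first term inherits the decay of $u$: $\|\psi_N u\|_{L^2(B^1_{x_0})}\le\|u\|_{L^2(B^1_{x_0})}\to0$ as $|x_0|\to\infty$, with no dependence on $N$. For the second term, given $\varepsilon>0$, use \eqref{A.dot} to fix $M$ so large that $(|x_0|+1)^{-1}\|\Theta\|_{L^2(B^1_{x_0})}<\varepsilon$ whenever $|x_0|>M$. For any $N$ and any $x_0$ with $|x_0|>M$, either $B^1_{x_0}\cap\supp\nabla\psi_N=\emptyset$, in which case the contribution vanishes, or $N-1\le|x_0|\le 2N+1$, in which case the chain of inequalities above reads $\|\Theta\nabla^\perp\psi_N\|_{L^2(B^1_{x_0})}\le C(|x_0|+1)N^{-1}\cdot\varepsilon\cdot(|x_0|+1)^{-1}\cdot(|x_0|+1)\le 3C\varepsilon$. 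This establishes \eqref{A.unlim} uniformly in $N$, completing the argument. The main obstacle, as indicated, is just the bookkeeping that matches the linear growth rate of the stream function against the $1/N$ scaling of the cut-off; once that matching is arranged, everything else reduces to Lemma \ref{LemA.stream} and its corollary.
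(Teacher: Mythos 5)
Your construction $u^N=\nabla^\perp(\psi_N\Theta)=\psi_N u+\Theta\,\nabla^\perp\psi_N$, with the linear growth of the stream function from Lemma \ref{LemA.stream} cancelling the $1/N$ decay of $\nabla\psi_N$ on the annulus $N\le|x|\le2N$, is exactly the paper's proof, and your verification of both \eqref{app} and the uniform decay \eqref{A.unlim} is correct. You have simply written out the bookkeeping that the paper leaves implicit.
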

\begin{proof} We may fix
$$
u^N(x):=\nabla^\perp(\Theta(x)\varphi_{N,0}(x))=u(x)\varphi_{R,0}(x)+\Theta(x)\nabla^\perp\varphi_{R,0}(x),
$$
where $\Theta(x)$ is the stream function constructed in Lemma \ref{LemA.stream} and $\varphi_{R,x_0}(x)$ are defined via \eqref{0.11} and \eqref{0.12}. Indeed, since $\varphi_{N,0}(x)=1$ for $|x|\le N$, we have $u^N(x)=u(x)$ for such $x$ and $\varphi_{N,0}(x)=0$ for $|x|\ge2N$ implies that $u^N(x)=0$ for such $x$. Moreover, due to \eqref{0.12}, $|\nabla^\perp\varphi_{R,0}(x)|\le CN^{-1}$ and this gradient is non-zero only if $N\le|x|\le 2N$. Thus, the uniform bounds \eqref{app} in the $L^2_b(\R^2)$ follow from \eqref{A.bound} and the uniform limit \eqref{A.unlim} for $u\in\dot L^2_b(\R^2)$
is an immediate corollary of \eqref{A.dot}. Corollary \ref{CorA.app} is proved.
\end{proof}

\section{Appendix 2: The interpolation inequality}\label{sA2}
The aim of this Appendix is to verify the sharp interpolation inequality stated in Lemma \ref{Lem4.key}. Although this inequality looks more or less standard, it is not easy to find  the precise reference in the literature, so for the convenience of the reader, we sketch its proof here. To this end, we need to remind briefly the definitions and some facts from the theory of Besov spaces and Paley-Littlewood decomposition, see \cite{L02,Tri78} for more detailed exposition.

\begin{definition}\label{DefA2.dyadic} Let $\phi\in C_0^\infty(\R^d)$ be a non-negative cut-off function such that $\phi(\xi)=1$ for $|\xi|\le1/2$ and $\phi(\xi)=0$ for $|\xi|\ge1$ and let $\psi(\xi)=\phi(\xi/2)-\phi(\xi)$. Then, for every $j\in\Bbb Z$, we define operators $S_j,\Delta_j:{\mathcal S}'(\R^d)\to {\mathcal S}'(\R^d)$ as follows:
$$
\widehat{S_j f}:=\phi(\xi/2^j)\widehat f,\ \ \ \widehat {\Delta_jf}:=\psi(\xi/2^j)\widehat f,\ \ f\in {\mathcal S}'(\R^d),
$$
where $\widehat f$ is the Fourier transform of the tempered distribution $f\in {\mathcal S}'(\R^d)$. Then, for any $N\in\Bbb Z$ and any $f\in {\mathcal S}'(\R^d)$,
%$$
\begin{equation}\label{A2.PL}
f=S_N f+\sum_{j\ge N}\Delta_j f.
\end{equation}
%$$
The distribution $\Delta_j f$ is called $jth$ dyadic block of the distribution $f$ in the Paley-Littlewood decomposition \eqref{A2.PL}. If, in addition,
%$$
\begin{equation}\label{A2.0}
\lim_{N\to-\infty}S_Nf=0 \ \ {\rm in} \ \ \mathcal S'(\R^d)
\end{equation}
%$$
then the {\it homogeneous} Paley-Littlewood decomposition holds:
%$$
\begin{equation}\label{A2.HPL}
f=\sum_{j\in\Bbb Z}\Delta_j f.
\end{equation}
%$$
The space of distributions satisfying \eqref{A2.0} is called the space of distributions vanishing at infinity and is denoted by $\mathcal S_0'(\R^d)$.
\end{definition}
Obviously, $L^2(\R^d)\subset \mathcal S'_0(\R^d)$ and by this reason the homogeneous decomposition \eqref{A2.HPL} holds for all functions used in this Appendix. We also remind that the operators $\Delta_j$ commute with differentiation and satisfy the Bernstein type inequalities:
%$$
\begin{equation}\label{A2.Ber}
\begin{cases}
1)\  \ \|\Delta_j f\|_{L^p}\le C 2^{-j}\|\nabla\Delta_j f\|_{L^p},\\  2)\ \ \|S_jf\|_{L^q}\le C2^{j(d/p-d/q)} \|S_j f\|_{L^p},
\end{cases}
\end{equation}
%$$
where $1\le p\le q\le\infty$ and $C$ is independent of $j$, see e.g., \cite{L02}.
\begin{definition}\label{DefA2.Besov} For any $1\le p,q\le\infty$ and any $\sigma\in\R_+$, we define the Besov space $B^\sigma_{p,q}(\R^d)$ as the subspace of $\mathcal S'(\R^d)$ generated by the following norm:
%$$
\begin{equation}\label{A2.Bes}
\|f\|_{B^\sigma_{p,q}}:=\|S_1f\|_{L^p}+\(\sum_{j=1}^\infty(2^{\sigma j}\|\Delta_j f\|_{L^p})^q\)^{1/q}<\infty.
\end{equation}
%$$  
The homogeneous Besov space $\dot B_{p,q}^\sigma(\R^d)$ is defined via
%$$
\begin{equation}\label{A2.HBes}
\|f\|_{\dot B^\sigma_{p,q}}:=\(\sum_{j=-\infty}^\infty(2^{\sigma j}\|\Delta_j f\|_{L^p})^q\)^{1/q}<\infty.
\end{equation}
%$$ 
In contrast to \eqref{A2.Bes} is only a semi-norm and the space $\dot B_{p,q}^\sigma$ is defined by modulo of polynomials (see \cite{L02} for the details). However, it is not essential for us since it is a norm on $\mathbb S_0'(\R^2)$ and we will use these spaces in 
the situation when $f\in L^2(\R^2)\subset \mathbb S'_0(\R^2)$ only.
\par
Finally, the space $BMO(\R^d)$ of functions with bounded mean oscillation is defined by the following semi-norm:
%$$
\begin{equation}
\|f\|_{BMO}:=\sup_{B\in\mathcal B}\frac1{|B|}\int_{B}|f(x)-m_Bf|\,dx<\infty,\ \ m_Bf:=\frac1{|B|}\int_Bf(x)\,dx,
\end{equation}
%$$
where $\mathcal B$ is a collection of all balls $B_{x_0}^R$, $0<R<\infty$, $x_0\in\R^d$ and $|B|$ is the $d$-dimensional volume of the ball $B$. This space is again defined by modulo of constants and it is again not essential for us since we will work with functions from $L^2(\R^d)\cap BMO(\R^d)$ only.
\end{definition}
The next standard proposition is crucial for what follows.
\begin{proposition}\label{PropA2.ZC} Let $R_j$, $j=1,\cdots, d$, be the Riesz operators defined by
%$$
\begin{equation}\label{A2.Riesz}
\widehat{R_j f}=\frac{i\xi_j}{|\xi|}\,\widehat f.
\end{equation}
%$$ 
Then the compositions of Riesz operators $R_{ij}:=R_{i}\circ R_j$ are bounded operators from $L^p(\R^d)$ to $L^p(\R^d)$ and from $L^\infty(\R^d)$ to $BMO(\R^d)$. Moreover, 
$$
BMO(\R^d)\subset \dot B^0_{\infty,\infty}(\R^d)
$$
and, consequently, $R_{ij}$ are bounded operators from $L^\infty(\R^d)$ to $\dot B^0_{\infty,\infty}(\R^d)$ as well.
\end{proposition}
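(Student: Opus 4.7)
All three assertions are classical and could be cited directly from \cite{L02} or \cite{Tri78}; my plan is to sketch them in order of increasing difficulty, using Calder\'on--Zygmund theory and the Paley--Littlewood characterization of $BMO$. For the $L^p\to L^p$ boundedness with $1<p<\infty$, the Fourier multiplier of $R_{ij}$ is $m_{ij}(\xi)=-\xi_i\xi_j/|\xi|^2$, bounded on $\R^d\setminus\{0\}$, so Plancherel yields $L^2$-boundedness at once. The associated convolution kernel $K_{ij}$ (the principal-value inverse Fourier transform of $m_{ij}$) is smooth off the origin, homogeneous of degree $-d$, and has mean zero on the unit sphere, so it is a standard Calder\'on--Zygmund kernel; the Calder\'on--Zygmund theorem then upgrades $L^2$ to $L^p$ for every $1<p<\infty$.

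For the $L^\infty\to BMO$ bound I would fix a ball $B=B^r_{x_0}$ and split $f=f\mathbf{1}_{2B}+f\mathbf{1}_{(2B)^c}=:f_1+f_2$. The $L^2$ estimate from the previous step gives $\|R_{ij}f_1\|_{L^2(B)}\le C\|f\|_{L^\infty}|B|^{1/2}$, which by Cauchy--Schwarz controls the mean deviation of $R_{ij}f_1$ on $B$. For the far piece, choose the normalizing constant $c:=R_{ij}f_2(x_0)$ and use the H\"ormander regularity estimate
\[
|K_{ij}(x-y)-K_{ij}(x_0-y)|\le C\frac{|x-x_0|}{|x_0-y|^{d+1}},\qquad x\in B,\ y\in(2B)^c,
\]
whose integral against $\mathbf{1}_{(2B)^c}$ is finite. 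This yields $\|R_{ij}f_2-c\|_{L^\infty(B)}\le C\|f\|_{L^\infty}$ and hence the desired mean-oscillation bound modulo constants.

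For the embedding $BMO\subset\dot B^0_{\infty,\infty}$, the Paley--Littlewood kernel of $\Delta_j$ has the self-similar form $\eta_j(x)=2^{jd}\eta(2^jx)$ with $\eta$ Schwartz and $\int\eta=0$ (the vanishing mean follows from $\psi$ being supported away from the origin). Using this cancellation to subtract any constant, I would write
\[
\Delta_jf(x)=\int\eta_j(x-y)\bigl(f(y)-m_{B_j(x)}f\bigr)\,dy,\qquad B_j(x):=B^{2^{-j}}_x,
\]
and decompose the integration domain into the dyadic annuli $A_k=\{2^{k-1-j}\le|y-x|\le 2^{k-j}\}$. On $A_k$ I would combine the Schwartz decay of $\eta_j$ with the standard $BMO$ estimate $|m_{2^kB_j(x)}f-m_{B_j(x)}f|\le Ck\|f\|_{BMO}$ and the trivial bound $\int_{2^kB_j(x)}|f-m_{2^kB_j(x)}f|\le |2^kB_j(x)|\,\|f\|_{BMO}$; summing in $k$ converges thanks to the arbitrary Schwartz decay of $\eta$ and produces $\|\Delta_jf\|_{L^\infty}\le C\|f\|_{BMO}$ uniformly in $j$, which is the desired Besov bound. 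This last step is the only genuinely nontrivial point of the proposition: it hinges on balancing the cancellation of the Paley--Littlewood kernel against the logarithmic growth of $BMO$ averages across nested balls, while the first two steps are routine Calder\'on--Zygmund material.
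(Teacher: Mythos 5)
Your sketch is correct, but note that the paper itself offers no argument here: Proposition \ref{PropA2.ZC} is stated and then dispatched with a single citation to \cite{L02}. What you have written is essentially the standard textbook proof that such a reference would contain, so there is no conflict of method --- you have simply supplied the details the paper delegates. All three steps are sound: the Calder\'on--Zygmund upgrade from the Plancherel $L^2$ bound, the $f=f_1+f_2$ splitting with the H\"ormander regularity estimate for the $L^\infty\to BMO$ bound, and the annulus decomposition exploiting $\int\eta=0$ together with the logarithmic drift $|m_{2^kB}f-m_Bf|\le Ck\|f\|_{BMO}$ for the embedding $BMO\subset\dot B^0_{\infty,\infty}$; the last sum converges because the Schwartz decay of $\eta$ beats the polynomial volume growth and the linear factor $k$. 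One small imprecision: for the diagonal compositions $R_{jj}$ the symbol $-\xi_j^2/|\xi|^2$ has spherical mean $-1/d\ne 0$, so the kernel is not purely a mean-zero Calder\'on--Zygmund kernel but a constant multiple of the identity plus one; this costs nothing (the identity is trivially bounded $L^p\to L^p$ and $L^\infty\to BMO$) but should be said if the sketch is expanded. With that caveat the proposal is a complete and correct substitute for the citation.
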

For the proof of this result see, e.g., \cite{L02}. We are now ready to prove Lemma \ref{Lem4.key}
\begin{proof}[Proof of Lemma \ref{Lem4.key}] We first note that \eqref{4.keyest} and \eqref{4.keyest1} are scaling invariant, so we only need to check them for, say, $R=1/2$. We start with the most delicate estimate \eqref{4.keyest}.
\par 
Extending the vector field by zero outside of $B^1_0$, denoting $h_1=\divv u$ and $h_2=\rot u$, after the Fourier transform, we get
%$$
\begin{equation}\label{A2.Helm}
\widehat {u_1}=-i\(\frac{\xi_1}{\xi_1^2+\xi_2^2}\widehat {h_1}+\frac{\xi_2}{\xi_1^2+\xi_2^2}\widehat {h_2}\),\ \  \widehat {u_2}=-i\(\frac{\xi_2}{\xi_1^2+\xi_2^2}\widehat {h_1}-\frac{\xi_1}{\xi_1^2+\xi_2^2}\widehat {h_2}\)
\end{equation}
%$$
and we see that $\Nx u$ can be expressed through $h_1$ and $h_2$ via the compositions $R_{ij}$ of Riesz operators. Thus, due to Proposition \ref{PropA2.ZC}, $\Nx u\in [\dot B^0_{\infty,\infty}(\R^2)]^4$ and
%$$
\begin{equation}\label{B.2}
\|\Nx u\|_{\dot B^0_{\infty,\infty}}\le C\(\|h_1\|_{L^\infty(B^1_0)}+\|h_2\|_{L^\infty(B^1_0)}\).
\end{equation}
%$$
 
Thus, using the Bernstein inequalities
$$
\|\Delta_j u\|_{L^p}\le C\|\Delta_j\Nx u\|_{L^p} 2^{-j},
$$
we see that $u\in \dot B^1_{\infty,\infty}(\R^2)$ and
%$$
\begin{equation}\label{B.3}
\|u\|_{\dot B^1_{\infty,\infty}(\R^2)}\le C\(\|h_1\|_{L^\infty}+\|h_2\|_{L^\infty}\).
\end{equation}
%$$
At the next step, we interpolate between $\dot B^0_{2,2}$ and $\dot B^1_{\infty,\infty}$. Namely, by the H\"older inequality
$$
\|\Delta_j u\|_{L^{12/5}}\le\|\Delta_j u\|_{L^2}^{5/6}\|\Delta_j\|_{L^\infty}^{1/6}
$$
and, therefore,
%$$
\begin{multline}\label{B.int}
\|u\|_{\dot B^{\frac16}_{\frac{12}5,\frac{12}5}(\R^2)}=\(\sum_{j\in\Bbb Z} (2^{j/6}\|\Delta_j u\|_{L^{12/5}})^{12/5}\)^{5/12}\le\\\le C\(\sum_{j\in\Bbb Z}\|\Delta_j u\|_{L^2}^2\(2^j\|\Delta_j u\|_{L^\infty}\)^{2/5}\)^{5/12}\le C\|u\|_{\dot B_{2,2}^0}^{5/6}\|u\|_{\dot B_{\infty,\infty}^{1}}^{1/6}.
\end{multline}
%$$
Note that, due to the Plancherel equality, $L^2(\R^2)\subset \dot B^0_{2,2}(\R^2)$, so, combining \eqref{B.int} and \eqref{B.3}, we have
%$$
\begin{equation}\label{B.besov-hom}
\|u\|_{\dot B^{\frac16}_{\frac{12}5,\frac{12}5}(\R^2)}\le C\|u\|_{L^2}^{5/6}\(\|h_1\|_{L^\infty}+\|h_2\|_{L^\infty}\)^{1/6}.
\end{equation}
%$$
On the next step, we obtain the analogue of \eqref{B.besov-hom} for the usual (non-homogeneous) Besov spaces. To this end, we utilize the fact that the supports of $h_1$, $h_2$ and $u$ belong to the unit ball $B^1_0$. By this reason, Proposition \ref{PropA2.ZC} gives
%$$
\begin{equation}\label{B.ZC}
\|u\|_{L^2(B^1_0)}\le C\|\Nx u\|_{L^2(B^1_0)}\le C_1\(\|h_1\|_{L^2}+\|h_2\|_{L^2}\)\le C_2\(\|h_1\|_{L^\infty}+\|h_2\|_{L^\infty}\) 
\end{equation}
%$$
Therefore, due to the Bernstein inequalities,
$$
\|S_1 u\|_{L^{12/5}}\le C\|S_1 u\|_{L^2}\le C_1\|u\|_{L^2}\le C_2\(\|h_1\|_{L^\infty}+\|h_2\|_{L^\infty}\)
$$
and, together with \eqref{B.ZC} and \eqref{B.besov-hom}, we have
%$$
\begin{equation}\label{B.besov}
\|u\|_{B^{\frac16}_{\frac{12}5,\frac{12}5}(\R^2)}\le C\|u\|_{L^2}^{5/6}\(\|h_1\|_{L^\infty}+\|h_2\|_{L^\infty}\)^{1/6}.
\end{equation}
%$$
Using now that $l_{q_1}\subset l_{q_2}$ for $q_2>q_1$, we have $B^{\frac16}_{\frac{12}5,\frac{12}5}\subset B^{\frac16}_{\frac{12}5,3}$ and
%$$
\begin{equation}\label{B.besov1}
\|u\|_{B^{\frac16}_{\frac{12}5,3}(\R^2)}\le C\|u\|_{L^2}^{5/6}\(\|h_1\|_{L^\infty}+\|h_2\|_{L^\infty}\)^{1/6}.
\end{equation}
%$$
Finally, using the embedding theorem
%$$
\begin{equation}\label{B.Triebel}
B^{s}_{p,q}(\R^2)\subset L^q(\R^2),\ \ s-\frac2p=-\frac2q,
\end{equation}
%$$
(see \cite{Tri78}, page 206) with $s=1/6$, $p=5/12$ and $q=3$, we end up with \eqref{4.keyest}.
\par
Thus, it only remains to verify \eqref{4.keyest1} in the particular case $x_0=0$ and $2R=1$. Indeed, due to Proposition \ref{PropA2.ZC} and expressions \eqref{A2.Helm}, we have
%$$
\begin{equation*}
\|u\|_{W^{1,p}(B^1_0)}\le C\|\Nx u\|_{L^p(B^1_0)}\le C_1(\|\divv u\|_{L^p}+\|\rot u\|_{L^p})
\end{equation*}
%$$
and estimate \eqref{4.keyest1} is now an immediate corollary of the interpolation inequality
$$
\|u\|_{L^\infty}\le C\|u\|_{L^2}^\theta\|u\|_{W^{1,p}}^{1-\theta},
$$
see \cite{Tri78}. Thus Lemma \ref{Lem4.key} is proved.
%\par
%For completeness, embedding \eqref{B.Triebel} is obtained from the classical embedding theorem
%$$
%H^{s_j}_p(\R^2)\subset L^{q_j}(\R^2),\ \ q_0>q>q_1,\ \ s_j-\frac2p=-\frac2{q_j}
%$$
%by the $(\cdot,\cdot)_{\theta,q}$ interpolation using that
%$$
%(L^{q_1}(\R^n),L^{q_2}(\R^n))_{\theta,q}=L^q(\R^n),\ \ \frac1q=\frac{\theta}{q_1}+\frac{1-\theta}{q_2}
%$$
%and
%$$
%(H^{s_1}_p(\R^n),H^{s_2}_p(\R^n))_{\theta,q}=B_{p,q}^s(\R^n),\ \ s=\theta s_1+(1-\theta)s_2.
%$$
\end{proof}

\end{document}